\numberwithin{equation}{section}
\newtheorem{theorem}{Theorem}[section]
\newtheorem{notation}[theorem]{Notation}
\newtheorem{proposition}[theorem]{Proposition}
\newtheorem{corollary}[theorem]{Corollary}
\newtheorem{definition}[theorem]{Definition}
\newtheorem{remark}[theorem]{Remark}
\newtheorem{fact}[theorem]{Fact}
\newtheorem{example}[theorem]{Example}
\newtheorem{observation}[theorem]{Observation}
\newtheorem{discussion}[theorem]{Discussion}
\newtheorem{question}[theorem]{Question}
\newtheorem{conjecture}[theorem]{Conjecture}
\newtheorem{hypothesis}[theorem]{Hypothesis}
\DeclareMathOperator{\Gdim}{Gdim}
\DeclareMathOperator{\codim}{codim}
\DeclareMathOperator{\Ass}{Ass}
\DeclareMathOperator{\coker}{coker}
\DeclareMathOperator{\grade}{grade}
\DeclareMathOperator{\Ann}{Ann}\DeclareMathOperator{\ann}{Ann}
\DeclareMathOperator{\Fitt}{Fitt}
\DeclareMathOperator{\unm}{unm}
\DeclareMathOperator{\Spec}{Spec}
\DeclareMathOperator{\rad}{rad}
\DeclareMathOperator{\Tr}{Tr}\DeclareMathOperator{\tr}{tr}
\DeclareMathOperator{\Ht}{ht}
\DeclareMathOperator{\pd}{pd}
\DeclareMathOperator{\Tor}{Tor}
\DeclareMathOperator{\Ext}{Ext}
\DeclareMathOperator{\Hom}{Hom}
\DeclareMathOperator{\depth}{depth}
\DeclareMathOperator{\id}{id}
\DeclareMathOperator{\Char}{char}
\DeclareMathOperator{\Syz}{Syz}
\newcommand{\fm}{\mathfrak{m}}
\newcommand{\fp}{\mathfrak{p}}
\newcommand{\fq}{\mathfrak{q}}
\newcommand{\up}[1]{{{}^{#1}\!}}
\newcommand{\lo}{\longrightarrow}
\begin{document}

\author[]{Mohsen Asgharzadeh}

\address{}
\email{mohsenasgharzadeh@gmail.com}

\title[Annihilator of Ext ]
{Annihilator of Ext}

\subjclass[2020]{ Primary 13G05; 13B22; 13A35.} 
\keywords{Annihilator; divisorial ideal; Ext-modules; integral closure; characteristic p methods. }

\begin{abstract}
	We investigate the higher divisorial ideal 
	\(
	D(I) := \Ann(\Ext^g_R(R/I,R))
	\)
	associated to an ideal \(I\) of grade \(g\). Our main focus is the containment problem \( D(I) \subseteq \overline{I} \). We show that this inclusion holds for broad classes of ideals, including unmixed ideals of finite projective dimension over $3$-dimensional quasi-normal rings, parameter ideals in quasi-Gorenstein rings, and powers of perfect ideals under suitable homological conditions. Conversely, we construct explicit examples demonstrating the necessity of these hypotheses. 
	We develop structural properties of \(D(I)\), relating it to unmixed parts, reflexive closures, symbolic powers, Frobenius closure, and trace ideals. Applications include criteria for the triviality of reflexive modules and vector bundles on punctured spectra, as well as new connections among annihilators of Ext, conductor ideals, and local cohomology.
\end{abstract}

\maketitle
\tableofcontents

\section{Introduction}

The interplay between homological algebra and commutative algebra has long been a source of deep results and unexpected connections. A particularly fruitful interaction arises in the study of Ext modules and their annihilators, which form a bridge between module-theoretic properties and ideal-theoretic phenomena. For an ideal \(I\) in a commutative Noetherian ring \(R\), the modules \(\Ext^i_R(R/I,R)\) encode substantial information about both the ideal \(I\) and the ambient ring \(R\). Among these, the first nonvanishing Ext module—occurring when \(i = \grade(I,R)\)—is especially significant.

In his book on divisor theory in module categories \cite{wol}, Vasconcelos introduced the \emph{higher divisorial ideal} associated to an ideal \(I\) of grade \(g\):
\[
D(I) := \Ann\!\bigl(\Ext^g_R(R/I,R)\bigr).
\]
This invariant naturally interpolates between \(I\) and its radical \(\rad(I)\), and in Cohen–Macaulay settings, between the unmixed part \(I^{\unm}\) and \(\rad(I)\). Despite this seemingly constrained position, \(D(I)\) exhibits remarkable and often unexpected behavior, differing substantially from classical closure operations such as integral closure \(\overline{I}\), reflexive closure \(I^{**}\), and symbolic powers \(I^{(n)}\). Understanding the precise relationship between \(D(I)\) and these established closure operations forms the central motivation for this investigation.

The study of annihilators of Ext modules traces back to several major developments. The Auslander–Bridger transpose and its relationship with Ext revealed deep links between module theory and ideal theory. Subsequent work of Corso, Huneke, Katz, and Vasconcelos \cite{corso} systematically examined annihilators of Koszul homology (which agrees with the relevant Ext modules in the grade setting), establishing fundamental bounds and raising natural questions about containment relationships. Their work highlighted the intriguing position of \(D(I)\): while it always contains \(I\) and is contained in \(\rad(I)\), its relationship with the integral closure \(\overline{I}\) remains subtle and context-dependent.

Integral closure itself is a central notion in commutative algebra, with applications ranging from resolution of singularities to tight closure theory and multiplicity theory. Characterizing when homological invariants such as \(D(I)\) lie inside \(\overline{I}\) therefore connects two important strands of research: the homological properties of ideals and their geometric and analytic behavior as measured by integral dependence.

The guiding question of this paper, extending investigations initiated in \cite{corso}, is the following.

\begin{question}\label{1.1}
	Suppose \(R\) is Cohen–Macaulay and \(I\) is unmixed. Is it true that
	\(
	D(I) \subseteq \overline{I}\, ?
	\)
\end{question}

We approach this question from multiple perspectives, obtaining both positive results that identify broad classes of ideals satisfying the containment and negative results demonstrating its failure in general. Our main contributions are as follows.

\begin{enumerate}
	\item[(i)] \emph{Structural analysis of \(D(I)\).}  
	We establish foundational properties of higher divisorial ideals, including their behavior under flat base change, iteration properties, and monotonicity with respect to inclusion (see Corollary \ref{perd}). We also relate \(D(I)\) to primary decomposition through its containment between \(I^{\unm}\) and \(\rad(I)\) over Cohen–Macaulay rings (see Proposition \ref{dd}).
	
	\item[(ii)] \emph{Positive containment results.}  
	We prove that \(D(I) \subseteq \overline{I}\) holds for:
	\begin{itemize}
		\item[(a)] unmixed ideals of finite projective dimension over $3$-dimensional quasi-normal rings (see Proposition \ref{6.7});
		\item[(b)] parameter ideals in universally catenary quasi-Gorenstein rings (see Proposition \ref{2.8});
		\item[(c)] powers of perfect ideals satisfying suitable Betti number conditions (see Proposition \ref{pp});
		\item[(d)] ideals of analytic spread one (see Proposition \ref{6.8} and Corollary \ref{AP}).
	\end{itemize}
	
	\item[(iii)] \emph{Counterexamples and limitations.}  
	We construct explicit examples showing that the containment may fail even in low-dimensional regular rings (see Corollary \ref{n}), thereby demonstrating the necessity of the unmixedness hypothesis in Question \ref{1.1}. Our example is significantly simpler than the one given in \cite{corso}, and the ring involved is $3$-dimensional.
	
	\item[(iv)] \emph{Connections with other ideal operations.}  
	We clarify relationships between \(D(I)\) and:
	\begin{itemize}
		\item[(a)] reflexive closure \(I^{**}\), particularly in dimension one;
		\item[(b)] trace ideals and conductor ideals (see Propositions \ref{cl} and \ref{5.4});
		\item[(c)] Frobenius and tight closure operations in positive characteristic (see, for example, Proposition \ref{612}).
	\end{itemize}
	
	\item[(v)] \emph{Applications.}  
	We derive consequences for:
	\begin{itemize}
		\item[(a)] criteria for the triviality of reflexive modules and vector bundles on punctured spectra (see Corollary \ref{76});
		\item[(b)] connections between annihilators of Ext and local cohomology (see Proposition \ref{1d});
		\item[(c)] characterizations of when \(D(I) = I\) (see Proposition \ref{d}).
	\end{itemize}
\end{enumerate}

We conclude with a brief overview of the techniques used throughout the paper.

\begin{itemize}
	\item We employ homological methods such as the Auslander–Buchsbaum formula, duality theory, and properties of perfect and Cohen–Macaulay modules to analyze \(\Ext^g_R(R/I,R)\) and its annihilator.
	
	\item We use ideal-theoretic tools such as primary decomposition, integral closure theory, and trace ideal techniques to relate \(D(I)\) to other ideal operations.
	
	\item In positive characteristic, we exploit Frobenius actions to obtain additional results and examples.
	
	\item From a geometric viewpoint, particularly in applications to vector bundles, we use sheaf-theoretic interpretations and local cohomology to connect algebraic properties of \(D(I)\) with geometric properties of reflexive sheaves.
\end{itemize}

We follow \cite{HS} for terminology and notation concerning integral closure and refer to \cite{BH} for background on homological algebra and Cohen–Macaulay rings.
\section{Preliminaries and structural concepts}

Throughout the paper, $(R,\fm,k)$ denotes a commutative Noetherian local ring. All modules are assumed to be finitely generated unless otherwise specified.

\begin{fact}\label{k}
	Suppose $I$ is an ideal of grade $g$. Then
	\(
	I \subseteq \Ann\!\bigl(\Ext^{g}_R(R/I ,R)\bigr)\subseteq \rad(I).
	\)
\end{fact}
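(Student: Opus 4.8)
The statement decomposes into two inclusions of very different natures, and I would treat them separately. For the lower bound $I\subseteq\Ann(\Ext^{g}(R/I,R))$ I argue purely formally: given $a\in I$, multiplication by $a$ on $R/I$ is the zero map because $I$ annihilates $R/I$; applying the contravariant additive functor $\Ext^{g}(-,R)$ and using its $R$-linearity, the induced endomorphism of $\Ext^{g}(R/I,R)$ is again multiplication by $a$, hence zero. Thus $I\subseteq\Ann(\Ext^{g}(R/I,R))$; equivalently $\Ext^{g}(R/I,R)$ is naturally a module over $R/\Ann(R/I)=R/I$, so that automatically $\Supp\Ext^{g}(R/I,R)\subseteq V(I)$. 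This half needs no hypothesis beyond $I$ having a grade, and I record here that $\Ext^{g}(R/I,R)\neq0$ since $g$ is by definition the first index with nonvanishing Ext.

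For the upper bound $\Ann(\Ext^{g}(R/I,R))\subseteq\rad(I)$ I pass to radicals and recast everything in terms of support. For a finitely generated module $M$ one has $\rad(\Ann M)=\bigcap_{\fp\in\Supp M}\fp$, so $\Ann(\Ext^{g}(R/I,R))\subseteq\rad(I)$ is equivalent to the reverse support inclusion $V(I)\subseteq\Supp\Ext^{g}(R/I,R)$; combined with the automatic inclusion from the first paragraph this amounts to proving the equality $\Supp\Ext^{g}(R/I,R)=V(I)$, whence $\rad(\Ann\Ext^{g}(R/I,R))=\rad(I)$ and the claim follows. Since support is stable under specialization, it suffices to verify $\fp\in\Supp\Ext^{g}(R/I,R)$ for each minimal prime $\fp$ of $I$; after localizing, $\Ext^{g}(R/I,R)_{\fp}\cong\Ext^{g}_{R_{\fp}}(R_{\fp}/I_{\fp},R_{\fp})$, so the task is to show this localized Ext-module is nonzero.

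To analyze the localized module I would fix a maximal $R$-regular sequence $\ux=x_{1},\dots,x_{g}$ inside $I$ and use Rees's identification $\Ext^{g}(R/I,R)\cong\Hom(R/I,R/(\ux))=\bigl((\ux):_{R}I\bigr)/(\ux)$, which gives $\Ass\Ext^{g}(R/I,R)=V(I)\cap\Ass(R/(\ux))$; the remaining point is then that each minimal prime of $I$ be associated to the complete intersection $(\ux)$. Equivalently, after localizing at a minimal prime $\fp$ of $I$, the first nonvanishing Ext occurs in degree $\grade(I_{\fp},R_{\fp})$, and grade never drops under localization, so $\grade(I_{\fp},R_{\fp})\geq g$ always. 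The main obstacle is the reverse inequality $\grade(I_{\fp},R_{\fp})\le g$, i.e. that the grade does not jump at any minimal prime, since otherwise $\Ext^{g}_{R_{\fp}}=0$ and that prime would drop out of the support. This is precisely where the ambient structure enters: over a Cohen-Macaulay ring grade coincides with height, so when $\rad(I)$ is equidimensional of grade $g$ every minimal prime $\fp$ satisfies $\Ht\fp=g=\grade(I_{\fp},R_{\fp})$ and is a minimal---hence associated---prime of $(\ux)$, forcing $\Supp\Ext^{g}(R/I,R)=V(I)$. I would therefore present the lower bound and the automatic support inclusion in full generality, and isolate this no-grade-jump statement as the single delicate step on which the upper bound rests.
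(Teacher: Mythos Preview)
Your treatment of the lower bound is correct and complete, and your reduction of the upper bound to the support equality $\Supp\Ext^{g}(R/I,R)=V(I)$, and thence to the assertion that $\grade(I_{\fp},R_{\fp})=g$ for every minimal prime $\fp$ of $I$, is exactly right. The paper's proof is a one-line appeal to Koszul homology: it invokes the identification $\Ext^{g}(R/I,R)\cong H_{n-g}(\ux;R)$ for a generating set $\ux$ of $I$ and asserts that the annihilator of this top nonvanishing Koszul homology lies in $\rad(I)$. That identification is standard, but the annihilator assertion is precisely the point at issue, and the paper offers no further argument.

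Your caution about the ``no-grade-jump'' step is not excessive---it reflects a genuine failure of the stated inclusion. Take $R=k[[x,y,z]]$ and $I=(xy,xz)$, a radical ideal of grade one with minimal primes $(x)$ and $(y,z)$. From the free resolution $0\to R\xrightarrow{\binom{z}{-y}}R^{2}\xrightarrow{(xy\ xz)}R\to R/I\to 0$ one computes $\Ext^{1}(R/I,R)\cong R/(x)$, so $\Ann(\Ext^{1}(R/I,R))=(x)$; but $\rad(I)=I$, and $x\notin I$. The grade jumps at the minimal prime $(y,z)$, where $\grade(I_{(y,z)},R_{(y,z)})=\depth R_{(y,z)}=2>1$, so $(y,z)$ drops out of $\Supp\Ext^{1}(R/I,R)$ exactly as you anticipated. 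The same example shows that the corresponding Koszul claim $\Ann H_{1}(xy,xz;R)\subseteq\rad(I)$ fails.

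So your analysis goes further than the paper's: the upper bound holds once every minimal prime $\fp$ of $I$ satisfies $\grade(I_{\fp},R_{\fp})=g$, in particular when $R$ is Cohen--Macaulay and $I$ is height-unmixed. In the paper's applications this is effectively what is used, since over a Cohen--Macaulay ring one has $\Ext^{g}(R/I,R)\cong\Ext^{g}(R/I^{\unm},R)$ and $I^{\unm}$ is height-unmixed; the correct containment is therefore $D(I)\subseteq\rad(I^{\unm})$ rather than $D(I)\subseteq\rad(I)$.
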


\begin{proof}
	This is well known. The module $\Ext^{g}_R(R/I,R)$ identifies with the last nonzero Koszul homology of $I$, and the annihilator of this Koszul homology is always contained in $\rad(I)$.
\end{proof}

\begin{remark}
	The assumption that $g=\grade(I,R)$ is essential; see \cite{syz,supp}.
\end{remark}

Let $I^{\unm}$ denote the unmixed part of $I$. Vasconcelos proved that over Cohen–Macaulay rings with a canonical module one has
\(
I^{\unm}\subseteq \Ann(\Ext^{g}_R(R/I ,\omega_R)).
\)
The following result extends this from the Gorenstein case.

\begin{fact}\label{usi}
	Suppose $R$ is Cohen–Macaulay and $I$ is an ideal of height $g$. Then
	\(
	\Ext^{g}_R(R/I ,R) \cong \Ext^{g}_R(R/I^{\unm} ,R).
	\)
	In particular,
	\[
	I^{\unm}\subseteq \Ann\!\bigl(\Ext^{g}_R(R/I ,R)\bigr)\subseteq \rad(I).
	\]
\end{fact}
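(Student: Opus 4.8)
The plan is to compare $\Ext^g(R/I,R)$ and $\Ext^g(R/I^{\unm},R)$ directly through the inclusion $I\subseteq I^{\unm}$. Writing $M:=I^{\unm}/I$, I would start from the short exact sequence
$$0\lo M\lo R/I\lo R/I^{\unm}\lo 0,$$
and apply $\Hom(-,R)$. The resulting long exact sequence reads, around degree $g$,
$$\Ext^{g-1}(M,R)\lo \Ext^{g}(R/I^{\unm},R)\lo \Ext^{g}(R/I,R)\lo \Ext^{g}(M,R).$$
Thus it suffices to prove that the two flanking terms $\Ext^{g-1}(M,R)$ and $\Ext^{g}(M,R)$ vanish, for then the middle map is forced to be an isomorphism.

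The key is to locate the support of $M=I^{\unm}/I$. I would argue that $M_\fp=0$ for every prime $\fp$ with $\Ht\fp\le g$. Indeed, if $\Ht\fp<g$ then $I\not\subseteq\fp$, so $I_\fp=R_\fp=(I^{\unm})_\fp$; and if $\Ht\fp=g$ with $\fp\supseteq I$, then (since $\Ht I=g$) $\fp$ is necessarily minimal over $I$, so $I_\fp$ is the localization of the height-$g$ primary component sitting at $\fp$, which coincides with $(I^{\unm})_\fp$ by the very definition of the unmixed part. Hence $\Supp(M)$ consists only of primes of height $\ge g+1$, giving $\Ht(\ann M)\ge g+1$. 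Because $R$ is Cohen-Macaulay, grade equals height, so $\grade(\ann M,R)\ge g+1$. By Rees' characterization $\grade(\ann M,R)=\inf\{i:\Ext^i(M,R)\ne 0\}$, we conclude $\Ext^i(M,R)=0$ for all $i\le g$; in particular both $\Ext^{g-1}(M,R)$ and $\Ext^{g}(M,R)$ vanish, which yields the asserted isomorphism $\Ext^g(R/I,R)\cong\Ext^g(R/I^{\unm},R)$.

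For the ``in particular'' clause, I would combine this isomorphism with Fact \ref{k}. Since $I^{\unm}$ is unmixed of height $g$ and $R$ is Cohen-Macaulay, $\grade(I^{\unm})=g$, so Fact \ref{k} applied to $I^{\unm}$ gives $I^{\unm}\subseteq\Ann(\Ext^g(R/I^{\unm},R))$; transporting along the isomorphism gives $I^{\unm}\subseteq\Ann(\Ext^g(R/I,R))$. The upper bound $\Ann(\Ext^g(R/I,R))\subseteq\rad(I)$ is simply Fact \ref{k} applied to $I$ itself (note $\grade(I)=\Ht(I)=g$ here).

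The only genuinely delicate point is the localization computation showing $M_\fp=0$ in heights $\le g$; once $\grade(\ann M,R)$ is pinned down to be at least $g+1$, the vanishing of the two Ext modules and the remainder of the argument are formal consequences of the long exact sequence and Fact \ref{k}. I would take particular care to justify that every height-$g$ prime containing $I$ is minimal over $I$ (using $\Ht I=g$), since this is exactly what forces $I$ and $I^{\unm}$ to agree there and hence confines $\Supp(M)$ to codimension $\ge g+1$.
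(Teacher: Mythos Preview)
Your proof is correct and follows essentially the same route as the paper: both arguments use the short exact sequence $0\to I^{\unm}/I\to R/I\to R/I^{\unm}\to 0$, establish that $\grade(\ann(I^{\unm}/I),R)>g$ via the Cohen--Macaulay hypothesis (so the flanking Ext terms vanish), and then invoke Fact~\ref{k} for the chain of inclusions. Your localization argument for why $\Supp(I^{\unm}/I)$ lies in codimension $\ge g+1$ is just a more explicit version of the paper's assertion that the associated primes of $I^{\unm}/I$ have height strictly greater than $\Ht(I)$.
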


\begin{proof}
	The second inclusion follows from Fact \ref{k}. Any prime ideal associated to $I^{\unm}$ has height equal to $\Ht(I)$, while every associated prime of $I^{\unm}/I$ has height strictly greater than $\Ht(I)$. Hence
	\[
	\grade\!\bigl(\Ann(I^{\unm}/I),R\bigr)=\Ht\!\bigl(\Ann(I^{\unm}/I)\bigr)>\Ht(I)=g.
	\]
	It follows that
	\(
	\Ext^{g-1}_R(I^{\unm}/I,R)=\Ext^{g}_R(I^{\unm}/I,R)=0.
	\)
	Applying $\Hom_R(-,R)$ to the exact sequence
	\[
	0 \longrightarrow I^{\unm}/I \longrightarrow R/I \longrightarrow R/I^{\unm} \longrightarrow 0
	\]
	yields
	\[
	0=\Ext^{g-1}_R(I^{\unm}/I,R) \longrightarrow \Ext^{g}_R(R/I^{\unm} ,R) \longrightarrow \Ext^{g}_R(R/I ,R) \longrightarrow \Ext^{g}_R(I^{\unm}/I,R)=0,
	\]
	which proves the claim.
\end{proof}

The Cohen–Macaulay hypothesis is necessary.

\begin{example}
	Let $R:=K[[x,x_1,\ldots,x_n]]/(x\fm)$ and set $I=0$. Then $I^{\unm}=xR$ has height zero. Since
	\(
	\Ext^{0}_R(R/I ,R)=\Hom_R(R,R)=R,
	\)
	we have
	\(
	\Ann(\Ext^{0}_R(R/I ,R))=0,
	\)
	so $I^{\unm}=xR\nsubseteq 0$.
\end{example}

By $(G_1)$ we mean the Gorenstein condition in codimension one, and by $(S_2)$ we mean Serre's condition. A \emph{quasi-normal} ring is one satisfying both $(G_1)$ and $(S_2)$. We write $(-)^\ast:=\Hom_R(-,R)$ and $\overline{I}$ for the integral closure of $I$.

\begin{fact}[{\cite[4.2]{Fos}}]\label{fos}
	Suppose $R$ is quasi-normal and let $M$ be a lattice. Then
	\(
	\Hom_R(M,R)^\ast = \bigcap_{P \in \Spec^1(R)} M_P .
	\)
\end{fact}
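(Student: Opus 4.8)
The plan is to realize both sides of the asserted equality as $R$-submodules of $M \otimes_R K$, where $K$ denotes the total quotient ring of $R$, and to prove the two inclusions separately. Writing $(-)^* = \Hom_R(-,R)$, the left-hand side $\Hom(M,R)^*$ is by definition the reflexive hull $M^{**}$, and since $M$ is a lattice it is torsion-free, so the canonical maps $M^{**} \hookrightarrow (M^{**})_P \hookrightarrow M \otimes K$ are all injective. The whole argument rests on exploiting the two defining properties of a quasi-normal ring separately: $(G_1)$ to control localizations at height-one primes, and $(S_2)$ to glue this local information back together.

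The first step is a local computation powered by $(G_1)$. For $P \in \Spec^1(R)$ the ring $R_P$ is Gorenstein of dimension at most one, and $M_P$ is a torsion-free, hence maximal Cohen-Macaulay, $R_P$-module; over a one-dimensional Gorenstein local ring every maximal Cohen-Macaulay module is reflexive, so $(M^{**})_P = (M_P)^{**} = M_P$. Because localization commutes with $\Hom$ into $R$ for finitely generated modules over a Noetherian ring, this identifies the stalks of $M^{**}$ and of $M_P$ at every height-one prime. The forward inclusion $M^{**} \subseteq \bigcap_{P \in \Spec^1(R)} M_P$ then follows at once: for each such $P$ we have $M^{**} \hookrightarrow (M^{**})_P = M_P$ inside $M \otimes K$, and intersecting over all $P$ gives the claim.

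For the reverse inclusion I would argue one element at a time. Fix $x \in \bigcap_{P \in \Spec^1(R)} M_P$ and set $L := M^{**} + Rx \subseteq M \otimes K$, a finitely generated torsion-free module containing $M^{**}$; the goal is to show $C := L/M^{**} = 0$, that is, $x \in M^{**}$. By the previous step, for every $P \in \Spec^1(R)$ we have $x \in M_P = (M^{**})_P$, so $L_P = (M^{**})_P$ and hence $C_P = 0$; together with the vanishing at minimal primes (where everything equals $M \otimes K$ locally) this shows $\Supp(C) \subseteq \{P : \Ht P \ge 2\}$. Now suppose $C \ne 0$ and pick $P \in \Ass(C)$, necessarily of height at least two. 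Since $M^{**}$ is reflexive over the $(S_2)$ ring $R$ it satisfies $(S_2)$, whence $\depth (M^{**})_P \ge 2$; and $L_P$ is torsion-free over $R_P$, which has positive depth, so $\depth L_P \ge 1$. Applying the depth lemma to $0 \to (M^{**})_P \to L_P \to C_P \to 0$ yields $\depth C_P \ge \min\{\depth L_P, \depth (M^{**})_P - 1\} \ge 1$, contradicting $P \in \Ass(C)$. Hence $C = 0$ and $x \in M^{**}$.

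The main obstacle is the reverse inclusion, and within it the input that the reflexive hull $M^{**}$ inherits the $(S_2)$ condition from $R$ — this is precisely what converts the codimension bound on $\Supp(C)$ into the decisive depth inequality. One should also verify carefully that localization at a height-one prime really does turn a torsion-free module into a reflexive $R_P$-module (this is exactly where $(G_1)$, rather than full normality, is used) and keep track of the depth bookkeeping in the three-term sequence. These are the points at which the hypotheses of quasi-normality are genuinely consumed.
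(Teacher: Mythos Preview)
The paper does not supply a proof of this fact; it merely records it with a citation to Fossum's book. Your argument is correct and is essentially the standard one: use $(G_1)$ to identify $(M^{**})_P$ with $M_P$ at every height-one prime, and use $(S_2)$ (via the $(S_2)$ property of the reflexive hull and the depth lemma) to show that the quotient $\big(\bigcap_P M_P\big)/M^{**}$, which is supported only in codimension at least two, must vanish. The bookkeeping is handled carefully, including the key point that $\Hom_R(-,R)$ of any finitely generated module over an $(S_2)$ ring is itself $(S_2)$, which is exactly what makes the depth-lemma step go through.
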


\begin{conjecture}\label{26}
	Let \( I \) be an ideal of height and grade one. Then \( I^{**} \subseteq \overline{I} \).
\end{conjecture}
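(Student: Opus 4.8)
The plan is to play the two available codimension-one descriptions off against each other: Fossum's formula for the reflexive hull and the valuative description of integral closure. Since $I$ has grade one it contains a nonzerodivisor and hence is a lattice, so Fact \ref{fos} applies and gives
\[
I^{**}=\bigcap_{P\in\Spec^1(R)} I R_P,
\]
the intersection taken inside the total quotient ring. Thus $x\in I^{**}$ precisely when the image of $x$ in each height-one localization $R_P$ already lies in $I R_P$. First I would record that, because $R$ is quasi-normal, every such $R_P$ is a one-dimensional Gorenstein local ring; in the basic case where $R$ is normal (so $R_P$ is a DVR) the ideal $I R_P$ is automatically integrally closed, and membership $x\in I R_P$ is equivalent to the inequality $v_P(x)\ge v_P(I)$ for the divisorial valuation $v_P$ attached to $P$.

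Next I would bring in integral closure through its valuative criterion: working in the normal domain case, $x\in\overline{I}$ if and only if $v(x)\ge v(I)$ for every Rees valuation $v$ of $I$, using also that integral closure commutes with localization. The containment $I^{**}\subseteq\overline{I}$ then reduces to showing that the codimension-one inequalities $v_P(x)\ge v_P(I)$, ranging over $P\in\Spec^1(R)$, already force the entire family of Rees-valuation inequalities that cut out $\overline{I}$. I would try to establish this by enumerating the Rees valuations of a height-one ideal and matching their centers against the height-one primes produced by Fact \ref{fos}.

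The main obstacle is precisely this matching. Rees valuations of $I$ need \emph{not} all be divisorial of the first kind, i.e.\ some may have center of codimension $\ge 2$, and such a valuation can shrink $\overline{I}$ strictly below the divisorial hull $I^{**}$ while remaining invisible to the Fossum intersection. Controlling these higher-codimension valuations is the technical heart of the argument, and it is here that the height-one hypothesis must really be exploited: one wants to argue that for a height-one ideal the integral closure is already determined by the behavior along the codimension-one components $V(P)$, $P\in\Spec^1(R)$. I expect this to force an \emph{unmixedness} input, ruling out embedded components that would otherwise contribute an order valuation centered at a point of codimension $\ge 2$; indeed, an embedded primary component makes $\overline{I}$ strictly smaller than $I^{**}$ (already for $I=x\,\fm$ in a regular local ring of dimension two, where $I^{**}=(x)$ but $\overline{I}=I$), so the inequality fails without such a restriction. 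Isolating the unmixed case, in which the divisorial and valuation-theoretic pictures coincide, is therefore the step I would treat as the crux.
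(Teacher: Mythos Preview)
The statement is labelled a \emph{Conjecture} in the paper; there is no proof given, so there is nothing to compare your proposal against. More to the point, your own example already disproves the conjecture as stated. Take $R=k[[x,y]]$ and $I=x\fm=(x^2,xy)$. The unique minimal prime over $I$ is $(x)$, so $\Ht(I)=1$, and $x^2\in I$ is a nonzerodivisor, so $\grade(I,R)=1$ as well; thus $I$ satisfies both hypotheses of the conjecture. Yet, as you compute, $I^{**}=(x)$ while $\overline{I}=x\,\overline{\fm}=x\fm=I$, so $I^{**}\not\subseteq\overline{I}$. (The paper itself computes $D(I_1)=(I_1)^{**}=(x)$ for this very ideal in Example~\ref{ex1}'s neighborhood, namely Example~3.6(i), without drawing the consequence for the conjecture.)

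Your diagnosis of \emph{why} it fails is accurate: the $\fm$-adic order valuation is a Rees valuation of $I$ centered in codimension two, invisible to the Fossum intersection over $\Spec^1(R)$, and it is produced precisely by the embedded $\fm$-primary component of $I$. But the conclusion to draw is not that ``isolating the unmixed case is the crux of the proof''; it is that the conjecture is false and the unmixed hypothesis is missing from its statement. What survives is the grade-one instance of the paper's Question~\ref{1.1}, which remains open.
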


We write $\pd_R(-)$ and $\id_R(-)$ for the projective and injective dimensions, respectively.

\begin{remark}
	Conjecture~\ref{26} fails if one assumes only that $\Ht(I)=1$. For an example, let 
	\(
	R:=k[[X,Y]]/(X^2,XY).
	\)
	
	\begin{itemize}
		\item Let $I=\fm$, which has height one. Then $\overline{I}=\fm$. Suppose, for contradiction, that 
		\(
		I^{**} \subseteq \overline{I}=\fm.
		\)
		Applying $(-)^{*}$ to the short exact sequence
		\(
		0\rightarrow \fm \xrightarrow{i} R \rightarrow R/\fm \rightarrow 0
		\)
		yields
		\[
		\begin{CD}
		0@>>> \Hom_R(R/\fm , R) @>>> R^\ast @>>> \fm^\ast @>>> \Ext_{R}^1(R/\fm,R) @>>> 0\\
		@. @AAA \cong @AAA = @AAA = @AAA\\
		0@>>> xR @>>> R @>>> \fm^\ast @>>> \bigoplus_{i=1}^{t} R/\fm @>>> 0,
		\end{CD}
		\]
		where $t$ is the type of $R$. From the induced maps we obtain
		\[
		0 \longrightarrow \bigoplus_{i=1}^{t} xR \longrightarrow \fm^{**} \longrightarrow (R/xR)^\ast \longrightarrow \bigoplus_{i=1}^{t} \Ext_{R}^1(R/\fm,R).
		\]
		Combining this with the previous diagram shows
		\(
		\bigoplus_{i=1}^{t} \Hom_R(R/\fm , R) \subseteq \fm^{**} \subseteq \overline{I}=\fm.
		\)
		Hence $t=1$. By a result of Roberts (see \cite[9.6.3]{BH}), this implies that $R$ is Cohen–Macaulay, a contradiction.
		
		\item Now set $I:=yR$, which also has height one. Since $x$ satisfies $x^2=0$, we have $x\in \overline{I}$, and thus $\overline{I}=\fm$. Because
		\(
		I=R/(0:_Ry)=R/xR,
		\)
		we obtain
		\(
		I^\ast=(0:_Rx)=\fm,
		\)
		and therefore $I^{**}=\fm^\ast$.
		From the exact sequence
		\(
		0\to \fm \to R\to k\to 0
		\)
		we deduce
		\[
		0\to \Hom_R(k,R)=xR \to R\to \fm^\ast\to \Ext^1_R(k,R)\to 0.
		\]
		Since $\mu^i(\fm,R)\neq 0$ for all $i\in[\depth(R),\id(R)]$, we have $\Ext^1_R(k,R)\neq 0$. Consequently, there is a short exact sequence
		\[
		0\to I=R/xR \longrightarrow I^{**} \longrightarrow k^{\oplus s} \to 0
		\]
		for some $s>0$. 
	If $I^{**}\subseteq \overline{I}=\fm$, then
		\[
		k^{\oplus s}\cong I^{**}/I \subseteq \overline{I}/I=\fm/I=(x,y)/y \cong xR \cong k,
		\]
		forcing $s=1$. Thus the type of $R$ is one, which again implies that $R$ is Cohen–Macaulay, a contradiction.
	\end{itemize}
\end{remark}

\begin{proposition}\label{2.8}
	Let $R$ be universally catenary and quasi-Gorenstein, and let $I$ be a parameter ideal of $g:=\codim(I)$. Then
	\(
	\Ann\!\bigl(\Ext^g_R(R/I, R)\bigr) \subseteq \overline{I}.
	\)
\end{proposition}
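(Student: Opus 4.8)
The plan is to prove the precise identity $\Ann\bigl(\Ext^g(R/I,R)\bigr)=I^{\unm}$ and then deduce $I^{\unm}\subseteq\overline{I}$ from the unmixedness of the integral closure of a parameter ideal. Since $R$ is quasi-Gorenstein, $\omega_R\cong R$, whence $\Ext^g(R/I,R)\cong\Ext^g(R/I,\omega_R)$. Passing to the completion (still quasi-Gorenstein) and applying local duality, $\Ext^g(R/I,\omega_R)$ is the Matlis dual $\hh^{\,d-g}_{\fm}(R/I)^{\vee}$, where $d=\dim R$ and $(-)^{\vee}=\Hom_R(-,E_R(k))$. As $I$ is a parameter ideal of codimension $g$ we have $\dim R/I=d-g=:t$, so $\hh^{t}_{\fm}(R/I)$ is the top local cohomology of $R/I$; since Matlis duality preserves annihilators, everything reduces to the identity $\Ann\bigl(\hh^{t}_{\fm}(R/I)\bigr)=I^{\unm}$.

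To compute this annihilator I would first pass from $I$ to $I^{\unm}$. The module $I^{\unm}/I$ is supported only on the embedded primes of $I$, all of dimension $<t$, so $\hh^{t}_{\fm}(I^{\unm}/I)=\hh^{t+1}_{\fm}(I^{\unm}/I)=0$ and the sequence $0\to I^{\unm}/I\to R/I\to R/I^{\unm}\to 0$ gives $\hh^{t}_{\fm}(R/I)\cong\hh^{t}_{\fm}(R/I^{\unm})$. Dualizing once more, $\hh^{t}_{\fm}(R/I^{\unm})^{\vee}\cong\Ext^{g}(R/I^{\unm},\omega_R)=\omega_{S}$, the canonical module of $S:=R/I^{\unm}$. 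This is where the hypotheses are used: $\omega_R\cong R$ is faithful, so $R$ is equidimensional, and together with universal catenarity this makes $S$ equidimensional; hence its canonical module is faithful, $\Ann_S\omega_S=0$, and therefore $\Ann_R\bigl(\hh^{t}_{\fm}(R/I)\bigr)=\Ann_R\omega_S=I^{\unm}$. This yields $\Ann\bigl(\Ext^g(R/I,R)\bigr)=I^{\unm}$.

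It remains to show $I^{\unm}\subseteq\overline{I}$. A universally catenary equidimensional local ring is formally equidimensional, so Ratliff's theorem applies to the $g$-generated height-$g$ ideal $I$: every associated prime of $\overline{I}$ has height $g$, i.e.\ $\overline{I}$ is unmixed. Consequently $\overline{I}=\bigcap_{P\in\Min(I)}(\overline{I}\,R_P\cap R)$, whereas $I^{\unm}=\bigcap_{P\in\Min(I)}(I R_P\cap R)$; since $I R_P\subseteq\overline{I}\,R_P$ for every minimal prime $P$ of $I$, a termwise comparison gives $I^{\unm}\subseteq\overline{I}$. Combining the two halves, $\Ann\bigl(\Ext^g(R/I,R)\bigr)=I^{\unm}\subseteq\overline{I}$, as required.

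The main obstacle is the bookkeeping attached to completion and local duality: one must verify that $\widehat{R}$ stays quasi-Gorenstein, that forming $I^{\unm}$ and the canonical module commutes with completion (which is where formal equidimensionality is again relevant), and that the resulting annihilator identity descends from $\widehat{R}$ to $R$. The other essential ingredient is Ratliff's unmixedness theorem, which is exactly what converts the homological equality $D(I)=I^{\unm}$ into an integral-closure statement; the quasi-Gorenstein and universal-catenarity hypotheses are consumed precisely to supply equidimensionality of $R$ and the formal equidimensionality needed for that theorem.
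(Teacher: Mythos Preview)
Your plan coincides with the paper's: establish $\Ann(\Ext^g(R/I,R))=I^{\unm}$ by identifying $\Ext^g(R/I,R)$ with the canonical module $K_{R/I}$ and then using Aoyama's formula $\Ann(K_{R/I})=U_{R/I}$, after which one invokes the Ratliff-type fact $I^{\unm}\subseteq\overline I$ for parameter ideals in universally catenary equidimensional rings. The paper cites \cite{Ao} for the first step and \cite[Ex.~10.11]{Hut} for the second; your appeal to Ratliff's theorem and the termwise comparison of primary components is exactly the content of the latter reference.

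One step in your execution needs care. The isomorphism $\Ext^g(R/I,\omega_R)\cong H^{d-g}_{\fm}(R/I)^{\vee}$ is Grothendieck local duality in its Cohen--Macaulay form: it requires the normalized dualizing complex of $\widehat R$ to be $\omega_{\widehat R}[d]$. For a quasi-Gorenstein ring that is not Cohen--Macaulay one still has $\omega_R\cong R$, but the dualizing complex has cohomology in several degrees, so one cannot trade $\RHom(-,D^\bullet)$ for $\RHom(-,\omega_R[d])$ and read off a single Ext. The paper avoids this detour by citing Aoyama directly for $\Ext^g(R/I,K_R)=K_{R/I}$; your subsequent reduction---passing to $S=R/I^{\unm}$ and using that the canonical module of an equidimensional unmixed quotient is faithful---is precisely an unpacking of Aoyama's identity $\Ann(K_{R/I})=U_{R/I}$. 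Replace the local-duality step by a direct appeal to \cite{Ao} and your argument becomes the paper's, with the added virtue that you spell out why equidimensionality and universal catenarity are exactly the hypotheses consumed at each stage.
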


\begin{proof}
	Note that $g=\dim R-\dim(R/I)$. First assume $g=\dim R$, so $I$ is $\fm$-primary and $I^{\unm}=I$. Since $R$ is quasi-Gorenstein,
	\[
	\Ext^g_R(R/I, R)=\Ext^g_R(R/I, K_R)=K_{R/I}
	\]
	(see \cite{Ao}). Hence $\Ann(K_{R/I})=U_{R/I}=I$, and the claim follows.
	
	Now assume $g<\dim R$. By \cite[Ex.\ 10.11]{Hut}, the universally catenary hypothesis implies $I^{\unm}\subseteq\overline{I}$. Therefore,
	\(
	\Ann(\Ext^g_R(R/I, R)) \subseteq I^{\unm} \subseteq \overline{I}.
	\)
\end{proof}

By $I^\star$ we denote the tight closure of $I$.

\begin{corollary}
	Let $R$ be of prime characteristic, essentially of finite type over a field, and quasi-Gorenstein. If $I$ is a parameter ideal with $g:=\codim(I)<\dim R$, then
	\(
	\Ann(\Ext^g_R(R/I, R)) \subseteq I^\star.
	\)
\end{corollary}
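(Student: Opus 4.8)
The plan is to upgrade the conclusion \(\Ann(\Ext^g(R/I,R)) \subseteq \overline{I}\) of Proposition~\ref{2.8} to the \emph{smaller} ideal \(I^\star\), using the positive-characteristic hypotheses in an essential way. The key observation is that the proof of Proposition~\ref{2.8} does not merely land inside \(\overline{I}\): in the case \(g<d\) it actually produces the sharper containment \(\Ann(\Ext^g(R/I,R)) \subseteq I^{\unm}\), routing through the unmixed part before invoking \(I^{\unm}\subseteq\overline{I}\). Consequently it suffices to establish the purely tight-closure-theoretic inclusion \(I^{\unm}\subseteq I^\star\) for a parameter ideal, and then to chain \(\Ann(\Ext^g(R/I,R)) \subseteq I^{\unm}\subseteq I^\star\). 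I note that the hypothesis \(g<\dim R\) is only for convenience: when \(g=\dim R\) the ideal is \(\fm\)-primary, \(I^{\unm}=I\subseteq I^\star\) trivially, and the statement is immediate.

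First I would verify that the ambient hypotheses place us exactly in the regime where tight closure of parameter ideals is well behaved. Being essentially of finite type over a field, \(R\) is excellent, a homomorphic image of a regular (hence Cohen-Macaulay) ring, and admits completely stable test elements. Being quasi-Gorenstein, \(R\cong K_R\), whence \(\Ass(R)=\Ass(K_R)=\{\fp:\dim R/\fp=\dim R\}\); in particular every minimal prime has maximal dimension, so \(R\) is equidimensional (indeed unmixed). These are precisely the conditions under which colon-capturing holds for tight closure. With these in place, I would invoke colon-capturing to obtain \(I^{\unm}\subseteq I^\star\): an element \(u\in I^{\unm}\) satisfies \(cu\in I\) for some \(c\) avoiding every minimal prime of \(I\), and feeding this relation through the colon-capturing property for a system of parameters extending a generating set of \(I\) forces \(u\in I^\star\). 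This is the standard fact that, for a parameter ideal in an equidimensional homomorphic image of a Cohen-Macaulay ring of characteristic \(p\), tight closure captures the unmixed part.

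The main obstacle is exactly the inclusion \(I^{\unm}\subseteq I^\star\). It runs \emph{opposite} to the trivial general bound \(I^\star\subseteq\overline{I}\), so it cannot be deduced formally from Proposition~\ref{2.8}; it genuinely requires the Frobenius, which is why the characteristic-\(p\) hypothesis is indispensable. The two points demanding care are: confirming that equidimensionality (from quasi-Gorenstein) together with the homomorphic-image-of-Cohen-Macaulay and test-element properties (from essential finite type over a field) are enough to license colon-capturing; and observing that it is the parameter hypothesis on \(I\) — not mere unmixedness — that makes \(I^{\unm}\) accessible to the colon-capturing machinery. Once \(I^{\unm}\subseteq I^\star\) is secured, the corollary follows at once by combining it with the containment \(\Ann(\Ext^g(R/I,R))\subseteq I^{\unm}\) already furnished by the proof of Proposition~\ref{2.8}.
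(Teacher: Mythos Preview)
Your proposal is correct and matches the paper's (implicit) argument: the corollary is stated without proof immediately after Proposition~\ref{2.8}, and the intended reasoning is exactly the one you give --- the proof of Proposition~\ref{2.8} in the case \(g<d\) already yields \(\Ann(\Ext^g(R/I,R))\subseteq I^{\unm}\) (via \(\Ext^g(R/I,R)\cong K_{R/I}\) and \(\Ann K_{R/I}=I^{\unm}\)), and the cited reference \cite[Ex.~10.11]{Hut} is from Huneke's tight-closure monograph, where the characteristic-\(p\) colon-capturing statement \(I^{\unm}\subseteq I^\star\) for parameter ideals in equidimensional homomorphic images of Cohen--Macaulay rings is precisely what is on offer. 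Your verification that ``essentially of finite type over a field'' supplies the homomorphic-image-of-regular and excellence hypotheses, and that quasi-Gorenstein forces equidimensionality via \(\Ass(R)=\Ass(K_R)\), is exactly the bookkeeping the paper is tacitly relying on.
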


\begin{remark}
	Let $R$ be $F$-regular of dimension $d\ge 2$. Then there exists an ideal $I$ of grade $g$ such that
	\[
	\Ann(\Ext^g_R(R/I, R)) \nsubseteq I^\star.
	\]
\end{remark}

\begin{proof}
	Since $d>1$, there exists an ideal $I$ that is not unmixed. If
	\(
	\Ann(\Ext^g_R(R/I, R)) \subseteq I^\star,
	\)
	then by Fact \ref{usi},
	\[
	I^{\unm}\subseteq \Ann(\Ext^g_R(R/I, R)) \subseteq I^\star = I,
	\]
	because $F$-regular rings have all ideals tightly closed. Thus $I$ would be unmixed, a contradiction.
\end{proof}

\begin{proposition}
	Let $I$ be a height-one ideal and suppose $R$ is normal. Then
	\(
	I^{**} \supseteq \overline{I}.
	\)
\end{proposition}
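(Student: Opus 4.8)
The plan is to reduce the containment to a purely local statement at the height-one primes, where normality forces the local rings to be discrete valuation rings and integral closure becomes invisible. First I would note that $R$, being normal, is in particular quasi-normal (regularity in codimension one implies the Gorenstein condition $(G_1)$), and that the nonzero ideal $I$ is a lattice, being finitely generated and torsion-free of rank one over the domain $R$. Hence Fact \ref{fos} applies to $M = I$ and gives
\[
I^{**} = \Hom(I,R)^{*} = \bigcap_{P \in \Spec^1(R)} I_P,
\]
the intersection being formed inside the field of fractions $K$ of $R$. Consequently it suffices to show that $\overline{I} \subseteq I_P$ for every height-one prime $P$.

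The key step is the observation that for each $P \in \Spec^1(R)$ the localization $R_P$ is a one-dimensional normal local domain, hence a discrete valuation ring, and therefore a valuation ring in which \emph{every} ideal is integrally closed. Since integral closure commutes with localization (see \cite{HS}), this yields $(\overline{I})_P = \overline{I_P} = I_P$. Combining this with the elementary inclusion $\overline{I} \subseteq \bigcap_{P \in \Spec^1(R)} (\overline{I})_P$, which holds because each localization map $\overline{I} \hookrightarrow (\overline{I})_P$ is injective over the domain $R$ and all these modules sit inside $K$, I would conclude
\[
\overline{I} \subseteq \bigcap_{P \in \Spec^1(R)} (\overline{I})_P = \bigcap_{P \in \Spec^1(R)} I_P = I^{**},
\]
which is exactly the asserted containment.

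The points that demand care, rather than genuine obstacles, are the verification that $I$ satisfies the hypotheses of Fact \ref{fos} (that it is a rank-one lattice, so its double dual is computed by the displayed intersection over $\Spec^1(R)$) and the two standard facts invoked at height one, namely that integral closure commutes with localization and that all ideals of a discrete valuation ring are integrally closed. No deeper input is needed: the entire content is that normality makes the codimension-one local rings into valuation rings, and a valuation ring sees no difference between an ideal and its integral closure.
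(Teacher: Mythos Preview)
Your proof is correct and follows essentially the same route as the paper: both invoke Fact~\ref{fos} to identify $I^{**}$ with $\bigcap_{\fp\in\Spec^1(R)} I_\fp$ and then exploit that normality makes each $R_\fp$ a DVR. The only cosmetic difference is in the last step: the paper finishes via the valuative criterion $\overline{I}=\bigcap_{V\in\Sigma} IV\cap R$ over all DVRs birationally dominating $R$ (noting the height-one localizations are among them), while you use that integral closure commutes with localization and that ideals of a DVR are integrally closed---two phrasings of the same content.
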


\begin{proof}
	We have
	\(
	I^{**} = \bigcap_{\fp\in \Spec^1(R)} IR_{\fp} \cap R.
	\)
	For each $\fp\in\Spec^1(R)$, the localization $R_{\fp}$ is a DVR by $(R_1)$, hence integrally closed. Let $\Sigma$ denote the set of DVRs birationally dominating $R$. Then
	\[
	I^{**} = \bigcap_{\fp\in \Spec^1(R)} IR_{\fp} \cap R
	\supseteq \bigcap_{V\in\Sigma} IV \cap R
	= \overline{I},
	\]
	which completes the proof.
\end{proof}

\section{Higher Divisorial Ideals}

We begin by introducing the main object of this paper.

\begin{definition}[Vasconcelos \cite{wol}]
	Let $I$ be an ideal of grade $g$. The \emph{higher divisorial ideal} of $I$ is defined by
	\[
	D(I):=D_R(I):=\Ann_R\!\big(\Ext^{g}_R(R/I,R)\big).
	\]
\end{definition}

\begin{observation}
	Let $f:R\to S$ be a flat homomorphism of local rings and let $I$ be an ideal of $R$. Then
	\(
	D_R(I)S = D_S(IS).
	\)
\end{observation}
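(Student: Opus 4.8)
The plan is to show that each of the three ingredients of $D(-)$---the cohomological degree $g$, the Ext module, and the annihilator---commutes with the base change $f\colon R\to S$. Since $f$ is a flat homomorphism of local rings, it is a local homomorphism and hence automatically \emph{faithfully} flat (because $\fm S\subseteq\fn\neq S$), a fact I will use repeatedly. The workhorse is the standard flat base change isomorphism for Ext: as $R$ is Noetherian and $R/I$ is finitely presented, for every $i$ there is a natural isomorphism
\[
\Ext^i_R(R/I,R)\otimes_R S\;\cong\;\Ext^i_S\bigl((R/I)\otimes_R S,\,S\bigr)\;=\;\Ext^i_S(S/IS,S),
\]
using $(R/I)\otimes_R S\cong S/IS$ and $R\otimes_R S=S$.

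First I would check that the grade is unchanged, so that the superscript $g$ agrees on both sides. By faithful flatness, $\Ext^i_R(R/I,R)=0$ if and only if $\Ext^i_R(R/I,R)\otimes_R S=0$, which by the displayed isomorphism holds if and only if $\Ext^i_S(S/IS,S)=0$. Hence the least index with nonvanishing Ext is the same, i.e. $\grade(IS,S)=\grade(I,R)=g$, and so both $D_R(I)$ and $D_S(IS)$ are computed in the common degree $g$.

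Next I would verify that the annihilator commutes with the base change. Writing $M:=\Ext^g_R(R/I,R)$, which is finitely generated over $R$, there is an exact sequence $0\to \Ann_R(M)\to R\to \Hom_R(M,M)$, the right-hand map being $r\mapsto(m\mapsto rm)$. Tensoring with the flat module $S$ preserves exactness and identifies $\Hom_R(M,M)\otimes_R S$ with $\Hom_S(M\otimes_R S,M\otimes_R S)$, again because $M$ is finitely presented over the Noetherian ring $R$. Comparing kernels yields $\Ann_R(M)\otimes_R S\cong\Ann_S(M\otimes_R S)$ inside $S$; since tensoring the inclusion $\Ann_R(M)\hookrightarrow R$ with flat $S$ has image $\Ann_R(M)S$, this reads $\Ann_R(M)\,S=\Ann_S(M\otimes_R S)$.

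Assembling the pieces gives
\[
D_R(I)\,S=\Ann_R(M)\,S=\Ann_S(M\otimes_R S)=\Ann_S\bigl(\Ext^g_S(S/IS,S)\bigr)=D_S(IS),
\]
where the third equality is the Ext base change and the last uses the equality of grades from the first step. The only genuinely delicate point---and the step I would be most careful about---is the interchange of $\Ann$ and $\Hom$ with $\otimes_R S$, which is exactly where finite presentation of $M$ (from Noetherianity) and flatness of $S$ are really needed; everything else is formal once faithful flatness pins down the degree $g$.
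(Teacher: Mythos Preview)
Your proof is correct and follows essentially the same approach as the paper: establish preservation of grade via faithful flatness and flat base change for Ext, then use that annihilators of finitely generated modules commute with flat base change. You supply more detail on this last step (via the exact sequence $0\to\Ann_R(M)\to R\to\Hom_R(M,M)$), whereas the paper simply invokes flatness; otherwise the arguments coincide.
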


\begin{proof}
	Suppose $\grade(I)=g$. Then
	\(
	\Ext^i_R(R/I,R)\otimes_R S \cong \Ext^i_S(S/IS,S)
	\)
	for all $i$. Hence $\Ext^i_S(S/IS,S)=0$ for $i<g$, while
	\(
	\Ext^{g}_S(S/IS,S)\cong \Ext^{g}_R(R/I,R)\otimes_R S \neq 0
	\)
	since $S$ is faithfully flat. Thus $\grade(IS)=g$. Moreover,
	\[
	\Ann_S\!\big(\Ext^{g}_S(S/IS,S)\big)
	= \Ann_S\!\big(\Ext^{g}_R(R/I,R)\otimes_R S\big)
	= \Ann_R\!\big(\Ext^{g}_R(R/I,R)\big)S,
	\]
	because annihilators commute with flat base change.
\end{proof}

\begin{fact}\label{ddd}
	\begin{enumerate}
		\item[(i)] $I \subseteq D(I) \subseteq \rad(I)$.
		\item[(ii)] If $R/I$ is Noetherian, then there exists $n>0$ such that
		$D^n(I)=D^m(I)$ for all $m\ge n$.
		\item[(iii)] If $\grade(I)=1$, then $D(D(I))=D(I)$.
	\end{enumerate}
\end{fact}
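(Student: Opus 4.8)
Parts (i)--(iii) have increasing difficulty, and I would dispatch them in that order. Part (i) is nothing but Fact \ref{k} rewritten with the notation $D(I)=\Ann(\Ext^{g}(R/I,R))$, so there is nothing to do. For part (ii) the plan is to realize the iterates as an ascending chain inside a fixed Noetherian object. The operator $D$ is \emph{inflationary}: applying part (i) to the ideal $J:=D^{n}(I)$ gives $D^{n}(I)\subseteq D(D^{n}(I))=D^{n+1}(I)$. Before iterating I must justify that each $D^{n}(I)$ is again an ideal of grade $g$, so that $\Ext^{g}$ remains the first nonvanishing Ext; this is automatic, since part (i) forces $I\subseteq D^{n}(I)\subseteq\rad(I)$, hence $\rad(D^{n}(I))=\rad(I)$, and grade depends only on the radical, whence $\grade(D^{n}(I))=g$ for all $n$. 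Thus $I\subseteq D(I)\subseteq D^{2}(I)\subseteq\cdots\subseteq\rad(I)$ is an ascending chain of ideals containing $I$, so their images form an ascending chain in $R/I$, which stabilizes by the Noetherian hypothesis on $R/I$. Once $D^{n}(I)=D^{n+1}(I)$, a further application of $D$ gives $D^{m}(I)=D^{n}(I)$ for all $m>n$.

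Part (iii) is the substantive statement. Since $\grade(I)=1$, the ideal $I$ contains a nonzerodivisor, and applying $\Hom(-,R)$ to $0\to I\to R\to R/I\to 0$ (using $\Ext^{1}(R,R)=0$ and the injectivity of $R=\Hom(R,R)\to I^{\ast}$, which holds because $I$ has a nonzerodivisor) yields $\Ext^{1}(R/I,R)\cong I^{\ast}/R$. I would then pass to the total quotient ring $Q$ of $R$, identifying $I^{\ast}$ with the fractional ideal $I^{-1}:=(R:_{Q}I)$, under which $R\subseteq I^{\ast}$ becomes $R\subseteq I^{-1}$. This gives $D(I)=\Ann_{R}(I^{\ast}/R)=(R:_{R}I^{-1})=(I^{-1})^{-1}\cap R=:I_{v}\cap R$, where $I_{v}=(I^{-1})^{-1}$ is the divisorial (double-inverse) closure. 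The key algebraic input is the classical identity $((I^{-1})^{-1})^{-1}=I^{-1}$, valid for any fractional ideal (it follows from $I\subseteq I_{v}$ together with the inclusion-reversing property of $(-)^{-1}$). Setting $J:=D(I)=I_{v}\cap R$ and using $I\subseteq J\subseteq I_{v}$, inclusion-reversal gives $I_{v}^{-1}\subseteq J^{-1}\subseteq I^{-1}$; but $I_{v}^{-1}=((I^{-1})^{-1})^{-1}=I^{-1}$, so $J^{-1}=I^{-1}$. Since $\grade(J)=1$ as well (same radical argument as in (ii)), the identification applies verbatim to $J$, giving $D(J)=J_{v}\cap R=(J^{-1})^{-1}\cap R=(I^{-1})^{-1}\cap R=I_{v}\cap R=J$, that is, $D(D(I))=D(I)$.

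I expect the main obstacle to be the bookkeeping in part (iii): one must verify that the isomorphism $\Ext^{1}(R/I,R)\cong I^{\ast}/R$ and the passage to fractional ideals require only the presence of a nonzerodivisor in $I$, so that they apply equally to $J=D(I)$, and one must pin down the three-inverses identity as the precise mechanism collapsing the chain after a single step. The remaining ingredients---part (i) and the ascending-chain argument of part (ii)---are routine, the only point of care there being the invariance of grade under $D$, which the radical computation supplies.
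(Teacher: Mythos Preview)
Your proposal is correct and follows essentially the same route as the paper. For (i) and (ii) the arguments coincide (the paper omits the remark that $\grade(D^{n}(I))=g$, which you supply); for (iii) the paper simply asserts that in the grade-one case $D(I)=I^{\ast\ast}$ and then invokes $I^{\ast\ast}=I^{\ast\ast\ast\ast}$, whereas you unpack this identification via fractional ideals and the triple-inverse identity $((I^{-1})^{-1})^{-1}=I^{-1}$. These are the same argument in different notation, since $I^{\ast}\cong I^{-1}$ once $I$ contains a nonzerodivisor; your version is just more explicit. (Incidentally, since $I\subseteq R$ forces $I^{-1}\supseteq R$ and hence $(I^{-1})^{-1}\subseteq R$, the intersection with $R$ in your formula $D(I)=I_{v}\cap R$ is automatic.)
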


\begin{proof}
	(i) is given in Fact~\ref{k}.  
	
	(iii) If $\grade(I)=1$, then $D(I)=I^{**}$. Since reflexive hulls are idempotent, $I^{****}=I^{**}$, and the result follows.
	
	(ii) Define $D^{\ell}(I):=D(D^{\ell-1}(I))$ for $\ell>1$. By (i) we have an ascending chain
	\[
	I \subseteq D(I)\subseteq D^2(I)\subseteq \cdots.
	\]
	Since $R/I$ is Noetherian, this chain stabilizes.
\end{proof}

\begin{proposition}\label{d0d}
	One has $D(D(I))=D(I)$.
\end{proposition}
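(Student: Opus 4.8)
The plan is to show that, after factoring out a suitable regular sequence, the operator $D$ coincides with the double-annihilator (ideal-transform) closure, which is manifestly idempotent; the grade-one identity $D(I)=I^{\ast\ast}$ of Fact \ref{ddd}(iii) then becomes the case $g=1$ of this picture.

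First I set $J:=D(I)$. By Fact \ref{ddd}(i) we have $I\subseteq J\subseteq \rad(I)$, hence $\rad(J)=\rad(I)$ and therefore $\grade(J)=\grade(\rad J)=\grade(\rad I)=g$; in particular $D(J)=D(D(I))$ is formed with the same grade $g$, and the inclusion $D(I)\subseteq D(D(I))$ is just Fact \ref{ddd}(i) applied to $J$. It remains to prove the reverse inclusion, and I will in fact obtain equality by computing both ideals inside one common quotient. Since $\grade(I)=g$, I choose a maximal $R$-regular sequence $\ux=x_1,\dots,x_g\in I$ and put $\overline{R}:=R/(\ux)$; for an ideal $\fa\supseteq(\ux)$ I write $\overline{\fa}:=\fa\overline R$. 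Because $\ux\subseteq I\subseteq J$ and $\grade(J)=g$, the very same $\ux$ is also a maximal $R$-regular sequence in $J$.

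By Rees's change-of-rings isomorphism, for each $\fa\in\{I,J\}$ (both of grade $g$ and containing $\ux$) one has
\[
\Ext^g_R(R/\fa,R)\ \cong\ \Hom_{\overline R}(R/\fa,\overline R)\ =\ (0:_{\overline R}\overline{\fa}).
\]
Taking $R$-annihilators, and using that the $R$-annihilator of an $\overline R$-module is the preimage of its $\overline R$-annihilator, I conclude that $D(\fa)$ is the preimage in $R$ of $(\overline{\fa})^{\dagger}$, where for an ideal $\fc\subseteq\overline R$ I write $\fc^{\dagger}:=(0:_{\overline R}(0:_{\overline R}\fc))$; equivalently $\overline{D(\fa)}=(\overline{\fa})^{\dagger}$. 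Now the double annihilator is idempotent in $\overline R$: for every ideal $\fc$ the inclusion $\fc\subseteq(0:(0:\fc))$ together with the order-reversing property of $(0:-)$ gives $(0:(0:(0:\fc)))=(0:\fc)$, so $\fc\mapsto\fc^{\dagger}$ is a closure operator satisfying $(\fc^{\dagger})^{\dagger}=\fc^{\dagger}$. Applying this with $\fc=\overline I$ yields $\overline{D(I)}=(\overline I)^{\dagger}$, and hence
\[
\overline{D(D(I))}=(\overline{J})^{\dagger}=\bigl((\overline I)^{\dagger}\bigr)^{\dagger}=(\overline I)^{\dagger}=\overline{D(I)}.
\]
Since $D(I)$ and $D(D(I))$ both contain $(\ux)=\ker(R\to\overline R)$, the equality of their images in $\overline R$ forces $D(D(I))=D(I)$.

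The only genuinely delicate points are (a) that a single regular sequence $\ux$ can serve as a maximal regular sequence for both $I$ and $J=D(I)$ — this is precisely where $\grade(D(I))=g$, via Fact \ref{ddd}(i), is needed — and (b) the translation of the Ext-annihilator into the operation $\fc\mapsto(0:(0:\fc))$ through Rees's isomorphism and the preimage computation of annihilators. I expect (a) to be the main obstacle to phrase cleanly, since it is exactly what obstructs a naive ``localize and quote reflexivity'' argument in higher grade; once $D$ is identified with the double-annihilator closure modulo $\ux$, the idempotence, and therefore the whole statement, is formal.
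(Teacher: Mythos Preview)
Your proof is correct and follows essentially the same approach as the paper: reduce modulo a maximal regular sequence $\ux\subseteq I$ so that $D(-)$ becomes the double-annihilator operator $\fc\mapsto(0:_{\overline R}(0:_{\overline R}\fc))$ in $\overline R=R/(\ux)$, and then invoke the idempotence of that operator. The only cosmetic difference is that the paper writes the reduction directly in colon notation as $D(I)=(\ux:(\ux:I))$ rather than via Rees's isomorphism, and cites the triple-annihilator identity $\Ann(J)=\Ann(\Ann(\Ann(J)))$ where you argue the idempotence from first principles.
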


\begin{proof}
	First assume $\depth R=0$. By definition,
	\[
	D(I)=\Ann(\Hom_R(R/I,R))=\Ann(0:I)=(0:(0:I)).
	\]
	Thus
	\[
	D^2(I)=(0:(0:D(I)))=(0:(0:(0:(0:I)))).
	\]
	Recall that $\Ann(J)=\Ann(\Ann(\Ann(J)))$ for any ideal $J$ (see \cite[2.7]{lindo}). Hence $D^2(I)=D(I)$.
Now consider the general case. Let $\underline{x}=x_1,\dots,x_g$ be a maximal regular sequence contained in $I$. Then
	\(
	\underline{x}R \subseteq I \subseteq D(I)\subseteq D^2(I).
	\)
	It suffices to show
	\(
	D(I)/\underline{x}R = D^2(I)/\underline{x}R.
	\)
	But
	\[
	D(I)=(\underline{x}:(\underline{x}:I)), \qquad
	D^2(I)=(\underline{x}:(\underline{x}:(\underline{x}:(\underline{x}:I)))).
	\]
	Thus the claim reduces to proving
	\[
	(0:(0:I)) = (0:(0:(0:(0:I))))
	\]
	inside $R/\underline{x}R$, which follows from the depth-zero case.
\end{proof}

\begin{corollary}\label{perd}
	If $I_1\subseteq I_2$ are ideals of the same grade, then $D(I_1)\subseteq D(I_2)$. In particular,
	\[
	\bigcap_{m=1}^{\infty}D(I^m)\subseteq \cdots \subseteq D(I^{n+1})\subseteq D(I^n)\subseteq \cdots \subseteq D(I).
	\]
\end{corollary}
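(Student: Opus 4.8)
The plan is to reduce the displayed chain to the single monotonicity statement and then prove the latter by producing one maximal regular sequence that serves both ideals at once. First I would dispatch the \emph{in particular} clause: since $\rad(I^m)=\rad(I)$ we have $\V(I^m)=\V(I)$, and grade depends only on this support (indeed $\grade(J,R)=\min\{\depth R_{\fp}:\fp\in\V(J)\}$), so $g:=\grade(I^m)=\grade(I)$ is independent of $m$. Thus $I^{m+1}\subseteq I^m$ are ideals of the same grade, each inclusion $D(I^{m+1})\subseteq D(I^m)$ is an instance of the monotonicity statement, and the displayed descending chain, together with the intersection sitting at its foot, follows formally.

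For the monotonicity statement itself, let $g$ be the common grade of $I_1\subseteq I_2$ and choose $\underline{x}=x_1,\dots,x_g$ to be a maximal $R$-regular sequence contained in $I_1$. The decisive observation is that the \emph{same} $\underline{x}$ serves both ideals: it lies in $I_1\subseteq I_2$, it is $R$-regular of length $g$, and since $\grade(I_2)=g$ it is automatically a maximal regular sequence in $I_2$ as well. This is exactly the point at which the equal-grade hypothesis is consumed.

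Next I would invoke the colon description of $D$ already used in the proof of Proposition \ref{d0d}: through the Rees change-of-rings isomorphism $\Ext^g_R(R/I_j,R)\cong(\underline{x}:I_j)/\underline{x}R$ one obtains $D(I_j)=(\underline{x}:(\underline{x}:I_j))$ for $j=1,2$, both computed against the fixed sequence $\underline{x}$. Since forming $(\underline{x}:-)$ is inclusion-reversing, applying it twice is inclusion-preserving; concretely $I_1\subseteq I_2$ yields $(\underline{x}:I_1)\supseteq(\underline{x}:I_2)$ and hence $(\underline{x}:(\underline{x}:I_1))\subseteq(\underline{x}:(\underline{x}:I_2))$, that is $D(I_1)\subseteq D(I_2)$.

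The one genuine subtlety, and the step I expect to be the crux, is the insistence on a common grade: it is precisely what lets a single regular sequence compute both $D(I_1)$ and $D(I_2)$ as double colons against the \emph{same} $\underline{x}R$. Without it the two Ext modules would be realized over different quotient rings $R/\underline{x}R$, and the order-reversal argument would not apply verbatim. I would also remark that one may equivalently reduce modulo $\underline{x}$ to the depth-zero situation, where $D=(0:(0:-))$ and monotonicity is the elementary fact that the biannihilator is order-preserving, thereby matching the base case already isolated in the proof of Proposition \ref{d0d}.
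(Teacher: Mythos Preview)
Your argument is correct and follows essentially the same route as the paper: reduce via a common maximal regular sequence $\underline{x}\subseteq I_1\subseteq I_2$ (which is where the equal-grade hypothesis enters), then use that $D(I_j)=(\underline{x}:(\underline{x}:I_j))$ and the order-preservation of the double colon; the paper phrases this as reducing to grade zero and applying $(0:(0:-))$, which is exactly the alternative formulation you mention at the end. Your treatment is simply more explicit, in particular in verifying $\grade(I^m)=\grade(I)$ for the displayed chain.
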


\begin{proof}
	We reduce to the case $\grade(I_i)=0$. Then $(0:I_1)\supseteq (0:I_2)$, so
	\[
	D(I_1)=\Ann(0:I_1)\subseteq \Ann(0:I_2)=D(I_2).
	\]
\end{proof}

\begin{example}
	\begin{enumerate}
		\item[(i)] There exist $I_1\subseteq I_2$ such that $D(I_1)\nsubseteq D(I_2)$.
		
		\item[(ii)] If $x$ is $R$-regular, then $\bigcap_{m\ge1} D((xR)^m)=0$.
		
		\item[(iii)] If $R$ is Cohen--Macaulay and $I$ is $\fm$-primary, then $\bigcap_{m\ge1} D(I^m)=0$.
		
		\item[(iv)] If $R$ is regular of prime characteristic, then $\bigcap_{m\ge1} D(I^m)=0$.
		
		\item[(v)] Let $R=\dfrac{k[[x,x_1,\dots,x_n]]}{x\fm}$. Then $D(-)$ is constant on nonzero ideals.
		
		\item[(vi)] In the same ring as in (v), $\bigcap_{m\ge1} D(I^m)\ne0$ for every nonzero ideal $I$.
		
		\item[(vii)] If $R$ is a one-dimensional domain and $I\ne0$, then $\bigcap_{m\ge1} D(I^m)=0$.
	\end{enumerate}
\end{example}

\begin{proof}
	(i) Let $R=k[[x,y]]$, $I_1=(x^2,xy)$ and $I_2=(x^2,y)$. Then $I_1\subseteq I_2$ and
	$I_1=(x)\cap(x^2,y)$, so $D(I_1)=I_1^{**}=I_1^{\unm}=(x)$. Since $I_2$ is $\fm$-primary and integrally closed,
	$I_2=D(I_2)$. Thus $D(I_1)\nsubseteq D(I_2)$.
	
	(ii) Since $\Ext^1(R/x^mR,R)\cong R/x^mR$, we get $D((xR)^m)=(x^m)$. Krull's intersection theorem gives the result.
	
	(iii) Because $D(I^m)\subseteq \overline{I^m}$, it suffices to show $\bigcap_m \overline{I^m}=0$. This follows either from Briançon–Skoda plus Krull intersection, or from the valuative criterion: if $x$ lies in all $\overline{I^m}$, then $v(x)>m$ for all $m$, so $x=0$.
	
	(iv) The intersection annihilates $\Ext^g(R/I^m,R)$ for all $m$, hence annihilates $H^g_I(R)$. Since $\Ann(H^g_I(R))=0$, the claim follows.
	
	(v) If $0\ne a\in\fm$, then $\grade(aR)=0$ and
	$D(aR)=\Ann(0:a)=\Ann(x)=\fm$. By Corollary~\ref{perd}, $D(I)=\fm$ for any nonzero $I\subseteq\fm$.
	
	(vi) If $I\subseteq\fm$, then $D(I^m)=\fm$ for all $m$, so the intersection equals $\fm$.
	
	(vii) This follows from (iii).
\end{proof}
\begin{proposition}\label{dd} 
	Suppose $R$ is Cohen--Macaulay. The following assertions hold:
	\begin{enumerate}
		\item[(i)] 
		\(
		I \subseteq I^{\unm} \subseteq D(I) \subseteq \rad(I).
		\)
		\item[(ii)]
		Suppose $I$ and $J$ are primary ideals of the same height with different associated primes. Then
		\[
		D(I \cap J) \subseteq D(I) \cap D(J).
		\]
	\end{enumerate}
\end{proposition}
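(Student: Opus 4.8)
For part (i), the inclusion $I\subseteq I^{\unm}$ is simply the defining property of the unmixed part, so only the placement of $I^{\unm}$ and $D(I)$ between $I$ and $\rad(I)$ needs comment. Since $R$ is Cohen--Macaulay, grade and height agree for every ideal; in particular the integer $g=\grade(I)$ used in the definition of $D(I)$ equals $\Ht(I)$. Thus Fact~\ref{usi} applies directly and gives $I^{\unm}\subseteq\Ann(\Ext^{g}(R/I,R))=D(I)$, while Fact~\ref{k} gives $D(I)\subseteq\rad(I)$. Chaining these yields the full string of inclusions in (i).

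For part (ii), I would first fix notation by writing $\fp:=\rad(I)$ and $\fq:=\rad(J)$, so that $\fp\neq\fq$ are distinct primes, each of height $g$. Because $R$ is Cohen--Macaulay, each of $I$, $J$, and $I\cap J$ has grade equal to its height, which is $g$ in all three cases (for the intersection, $\Ht(I\cap J)=\min\{\Ht(I),\Ht(J)\}=g$). Hence $D(I)$, $D(J)$, and $D(I\cap J)$ are all genuinely computed from grade-$g$ Ext modules, and the strategy is to compare these three modules through the Mayer--Vietoris short exact sequence
\[
0\to R/(I\cap J)\to R/I\oplus R/J\to R/(I+J)\to 0,
\]
where the first map is $r\mapsto(r,r)$ and the second is $(a,b)\mapsto a-b$.

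The crucial step, and the place where the hypotheses on $\fp$ and $\fq$ are actually used, is to show that $\Ext^{g}(R/(I+J),R)=0$. Since $\rad(I+J)=\rad(\fp+\fq)$ and every prime containing $\fp+\fq$ must contain both $\fp$ and $\fq$, and hence strictly contain $\fp$ (because $\fq\not\subseteq\fp$), we get $\Ht(I+J)\geq g+1$. By Cohen--Macaulayness, $\grade(I+J)=\Ht(I+J)>g$, which forces $\Ext^{i}(R/(I+J),R)=0$ for all $i\leq g$. I expect this grade estimate to be the only real obstacle; note that in the local setting $\fp+\fq\subseteq\fm$, so no degenerate case with $I+J=R$ arises.

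Finally, I would apply the contravariant long exact sequence of $\Ext^{\bullet}(-,R)$ to the displayed short exact sequence. Inserting the vanishing $\Ext^{g}(R/(I+J),R)=0$ makes the relevant segment left exact, so that
\[
0\to\Ext^{g}(R/I,R)\oplus\Ext^{g}(R/J,R)\to\Ext^{g}(R/(I\cap J),R)
\]
exhibits $\Ext^{g}(R/I,R)\oplus\Ext^{g}(R/J,R)$ as a submodule of $\Ext^{g}(R/(I\cap J),R)$. Since an element annihilating a module annihilates each submodule, and the annihilator of a direct sum is the intersection of the annihilators, this gives
\[
D(I\cap J)=\Ann\bigl(\Ext^{g}(R/(I\cap J),R)\bigr)\subseteq\Ann\bigl(\Ext^{g}(R/I,R)\bigr)\cap\Ann\bigl(\Ext^{g}(R/J,R)\bigr)=D(I)\cap D(J),
\]
which is the desired containment.
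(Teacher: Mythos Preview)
Your proof is correct and follows essentially the same route as the paper: part (i) is reduced to Facts~\ref{k} and~\ref{usi}, and part (ii) is handled via the Mayer--Vietoris sequence $0\to R/(I\cap J)\to R/I\oplus R/J\to R/(I+J)\to 0$, the vanishing $\Ext^{g}(R/(I+J),R)=0$ coming from $\Ht(I+J)>g$, and the resulting embedding of $\Ext^{g}(R/I,R)\oplus\Ext^{g}(R/J,R)$ into $\Ext^{g}(R/(I\cap J),R)$. Your height argument (any prime over $\fp+\fq$ strictly contains $\fp$) is in fact a slightly cleaner packaging of the paper's case analysis on $\min(I+J)$, but the content is identical.
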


\begin{proof}
	(i) This is well known; see Fact~\ref{usi}.
	
	(ii) Consider the exact sequence
	\[
	0 \to R/(I \cap J) \to R/I \oplus R/J \to R/(I + J) \to 0.
	\]
	Applying $\Ext^g_R(-,R)$ gives
	\[
	\Ext^g_R(R/(I + J), R) \to \Ext^g_R(R/I, R) \oplus \Ext^g_R(R/J, R) \to \Ext^g_R(R/(I \cap J), R).
	\]
	
	Let $P \in \min(I+J)$. If $P \in \min(I)$, then $P \notin \min(J)$, so $P \supseteq J$ and hence $\Ht(P) > \Ht(J)$. The same conclusion holds if $P \in \min(J)$. If $P \notin \min(I) \cup \min(J)$, then again $P \supseteq I$ and $P \supseteq J$, so $\Ht(P) > \Ht(I)=\Ht(J)$. Thus every minimal prime of $I+J$ has height strictly larger than $g=\Ht(I)=\Ht(J)$. Since $R$ is Cohen--Macaulay, this implies
	\(
	\Ext^g_R(R/(I+J),R)=0.
	\) Therefore,
	\[
	\Ext^g_R(R/I,R)\oplus \Ext^g_R(R/J,R)
	\subseteq \Ext^g_R(R/(I\cap J),R),
	\]
	and hence
	\[
	\begin{aligned}
	D(I)\cap D(J)
	&= \Ann\!\big(\Ext^g_R(R/I,R)\big)\cap \Ann\!\big(\Ext^g_R(R/J,R)\big) \\
	&= \Ann\!\big(\Ext^g_R(R/I,R)\oplus \Ext^g_R(R/J,R)\big) \\
	&\supseteq \Ann\!\big(\Ext^g_R(R/(I\cap J),R)\big) \\
	&= D(I\cap J).
	\end{aligned}
	\]
\end{proof}

\begin{proposition}\label{d}
	The following statements hold:
	\begin{enumerate}
		\item[(i)] If $R$ is Cohen--Macaulay, then $D(\fm^n)\subseteq\overline{\fm^n}$ for all $n$.
		\item[(ii)] If $R$ is regular, then $D(\fm^n)=\fm^n$ for all $n$.
		\item[(iii)] Suppose $R$ is zero-dimensional. Then $D_R(-)=\id(-)$ if and only if $R$ is Gorenstein.
	\end{enumerate}
\end{proposition}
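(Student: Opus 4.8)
The plan is to handle the three items independently, reducing (i) and (ii) to the behaviour of $D$ on $\fm$-primary ideals together with the integral closure of powers of $\fm$, and to settle (iii) through the double-annihilator (Matlis) duality available in the Artinian case. For (i), the first observation is that $\fm^n$ is $\fm$-primary for every $n\ge 1$, so over the Cohen--Macaulay ring $R$ it has grade $g=\dim R$ and is unmixed. I would then apply the containment $D(J)\subseteq\overline{J}$ for $\fm$-primary ideals $J$ over Cohen--Macaulay rings — the very containment recalled in the Example following Corollary \ref{perd}, where it is used for the powers $I^m$ — taking $J:=\fm^n$; this gives $D(\fm^n)\subseteq\overline{\fm^n}$ immediately. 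The underlying reason (following \cite{corso}) is a minimal-reduction comparison: after the harmless faithfully flat base change $R\to R(X)$, under which both $D$ and integral closure are well behaved by the flat base-change observation, one chooses a minimal reduction $J=(x_1,\dots,x_d)\subseteq\fm^n$, uses that $J$ is generated by a regular sequence so that $\Ext^d(R/J,R)\cong R/J$, and pushes $D(\fm^n)$ through the induced map $\Ext^d(R/\fm^n,R)\to\Ext^d(R/J,R)$ to land it inside $\overline{J}=\overline{\fm^n}$.

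For (ii), a regular local ring is Cohen--Macaulay, so (i) already gives $D(\fm^n)\subseteq\overline{\fm^n}$. In a regular local ring the $\fm$-adic order function $\ord_\fm$, with $\ord_\fm(x)=\sup\{i:x\in\fm^i\}$, is a valuation, because $\operatorname{gr}_\fm(R)$ is a polynomial ring over $k$ and hence a domain; leading forms then multiply without cancellation. Consequently $\overline{\fm^n}=\{x:\ord_\fm(x)\ge n\}=\fm^n$, i.e. every power of $\fm$ is integrally closed. Combining this with the universal lower bound $\fm^n\subseteq D(\fm^n)$ from Fact \ref{k} yields $\fm^n\subseteq D(\fm^n)\subseteq\overline{\fm^n}=\fm^n$, forcing equality.

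For (iii), I first note that over an Artinian local ring every proper ideal $I$ has grade $0$ (since $\Soc R\subseteq(0:_R I)$), so $D(I)=\Ann(\Hom(R/I,R))=\Ann(0:_R I)=(0:(0:I))$, while $D(R)=R$ trivially; thus the assertion $D_R(-)=\id(-)$ is exactly the double-annihilator identity $(0:(0:I))=I$ for all ideals $I$. For the direction Gorenstein $\Rightarrow$ identity, I would use that an Artinian Gorenstein local ring is self-injective with $E_R(k)\cong R$, so that Matlis duality $(-)^\vee=\Hom(-,R)$ is exact and length-preserving; from $(R/I)^\vee=(0:I)$ one gets $\ell(0:I)=\ell(R)-\ell(I)$, hence $\ell(0:(0:I))=\ell(I)$, and since always $I\subseteq(0:(0:I))$, equality of these finite lengths forces $I=(0:(0:I))$. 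For the converse, the hypothesis makes $I\mapsto(0:I)$ an order-reversing involution, hence a bijection, on the lattice of ideals; such a map carries atoms, the minimal nonzero ideals, i.e. the lines in $\Soc R=(0:\fm)$, bijectively to coatoms, the maximal proper ideals. As $R$ is local its only coatom is $\fm$, so there is exactly one atom, forcing $\dim_k\Soc R=1$; hence $R$ is Gorenstein.

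The main obstacle is the upper bound in (i): the lower bound $\fm^n\subseteq D(\fm^n)$ and the radical bound $D(\fm^n)\subseteq\rad(\fm^n)=\fm$ are formal, but squeezing $D(\fm^n)$ inside $\overline{\fm^n}$ rather than merely inside $\overline{\fm}=\fm$ is the real content, and it rests on the $\fm$-primary Cohen--Macaulay containment recalled above. Here I would be careful to confirm that this input is genuinely the $\fm$-primary case and is not being invoked circularly, and, if a self-contained account is wanted, to complete the minimal-reduction comparison of \cite{corso} by identifying the image of $\Ext^d(R/\fm^n,R)\to\Ext^d(R/J,R)\cong R/J$. In (iii) the only delicate point is the converse, where one must argue purely lattice-theoretically that the single coatom of a local ring forces a one-dimensional socle.
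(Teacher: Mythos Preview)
Your proof is correct and follows essentially the same approach as the paper: for (i) you invoke the $\fm$-primary containment from \cite{corso} (the paper cites \cite[2.15]{corso} directly), for (ii) you combine (i) with $\overline{\fm^n}=\fm^n$ in regular rings (the paper cites \cite[Ex.~1.18]{HS}), and for (iii) you reduce to the double-annihilator criterion $(0:(0:I))=I$ (the paper cites \cite[Ex.~3.2.15]{BH}). The only difference is that you supply self-contained arguments for the cited facts, which the paper leaves to the references.
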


\begin{proof}
	(i) and (ii): By Proposition~\ref{dd}(i), we have $D(\fm^n)\supseteq \fm^n$. Since $\fm^n$ is $\fm$-primary, it follows from \cite[2.15]{corso} that
	\(
	D(\fm^n)\subseteq\overline{\fm^n}.
	\)
	If $R$ is regular, then powers of the maximal ideal are integrally closed (see \cite[Ex.~1.18]{HS}). Hence
	\[
	\fm^n \subseteq D(\fm^n) \subseteq \overline{\fm^n}=\fm^n,
	\]
	so equality holds.
	
	(iii) Recall that $\Hom_R(R/I,R)=(0:_R I)$. Hence
	\(
	D(I)=\Ann\big((0:_R I)\big).
	\)
	Thus $D(I)=I$ for all ideals $I$ if and only if $\Ann(\Ann(I))=I$ for all $I$, which is equivalent to $R$ being Gorenstein (see \cite[Ex.~3.2.15]{BH}).
\end{proof}

\begin{example}\label{ex1}
	The regularity assumption in Proposition~\ref{d}(ii) is essential.
Let
	\[
	R = k[x^5,x^6,x^{14}] \subset k[x], 
	\qquad I=\fm^2.
	\]
	Then $\fm^2$ is not integrally closed and
	\(
	\fm^2 \subsetneq D(\fm^2) \subsetneq \overline{\fm^2}.
	\)
\end{example}

\begin{proof}
	From the exact sequence
	\[
	0 \longrightarrow I \longrightarrow R \longrightarrow R/I \longrightarrow 0
	\]
	we obtain
	\[
	0 \longrightarrow \Hom_R(R,R)
	\longrightarrow \Hom_R(I,R)
	\longrightarrow \Ext^1_R(R/I,R)
	\longrightarrow 0.
	\]
	Hence
	\[
	\Ext^1_R(R/I,R)
	\cong \Hom_R(I,R)/R
	\cong I^{-1}/R.
	\]
	
	Since $R$ is integrally closed in $k[x]$, we have
	\[
	I^{-1} \subseteq k[x],
	\qquad
	\Ext^1_R(R/I,R)\subseteq k[x]/R.
	\]
	Thus
	\(
	\Ann\!\big(\Ext^1_R(R/I,R)\big) \supseteq \Ann(k[x]/R).
	\) Because $x^5\cdot x^4 \notin R$, we have $x^5 \notin \Ann(k[x]/R)$. On the other hand,
	\( 
	\{x^6,x^{14}\} \subset \Ann(k[x]/R),
	\)
	so
	\[
	(\fm^2,x^6,x^{14}) \subset \Ann\!\big(\Ext^1_R(R/I,R)\big).
	\]
	Therefore,
	\[
	\Ann\!\big(\Ext^1_R(R/I,R)\big) \subsetneq \overline{\fm^2}.
	\]
	
	Finally, $\fm^2$ is not reflexive (see \cite[7.5]{dao}), hence it is not integrally closed, since over a one-dimensional ring integrally closed ideals are reflexive (see \cite[2.14]{corso}).
\end{proof}
In Proposition~\ref{d}(i) and (ii), the powers of the maximal ideal behave in a particularly nice way. The next result shows that this behavior fails for powers of prime ideals in general.

\begin{corollary}\label{n}
	There exists a Cohen–Macaulay ring $R$ and a power $I$ of a prime ideal such that
	\(
	D(I)\nsubseteq \overline{I}.
	\)
\end{corollary}

\begin{proof}
	Let $k$ be a field of characteristic $2$ and set
	\(
	R:=k[[X,Y,Z]].
	\)
	Let $P$ be the defining ideal of the monomial curve $k[[t^3,t^4,t^5]] \subseteq k[[t]]$. Then $P\in\Spec(R)$ and
	\[
	g:=\grade(P,R)=\Ht(P)=2.
	\]
	The same equalities hold for $I:=P^2$.
	From \cite[3.7]{h} we know:
	\begin{enumerate}
		\item[(i)] $P^2 \ne P^{(2)}$,
		\item[(ii)] $P^2 = \overline{P^2}$.
	\end{enumerate}
	
	Suppose, toward a contradiction, that
	\[
	D(P^2)=\Ann\!\big(\Ext^g_R(R/P^2,R)\big)\subseteq \overline{P^2}.
	\]
	Since $P^n \subseteq P^{(n)}$ for all $n$, we have $P^2 \subseteq P^{(2)}$. Now we compare these ideals using reflexive hulls.
	Because $R$ is regular, it is quasi-normal. Hence by Fact~\ref{fos},
	\[
	(P^2)^{**}=\bigcap_{\fp\in \Spec^1(R)} (P^2)R_\fp = P^{(2)}.
	\]
	Moreover, since $R$ is Cohen–Macaulay and $\Ht(P^2)=g$, Fact~\ref{usi} gives
	\[
	(P^2)^{**}\subseteq \Ann\!\big(\Ext^g_R(R/P^2,R)\big)=D(P^2).
	\]
	Combining these containments with the assumption and (ii), we obtain
	\[
	P^{(2)}=(P^2)^{**}\subseteq D(P^2)\subseteq \overline{P^2}=P^2\subseteq P^{(2)}.
	\]
	Hence $P^{(2)}=P^2$, contradicting (i). Therefore
	\(
	D(P^2)\nsubseteq \overline{P^2}.
	\)
\end{proof}

\begin{remark}
	The above ideal $P^2$ is not unmixed. In fact, if $I$ is an unmixed ideal of finite projective dimension in a ring $R$ with $\dim(R/I)\le 1$, then one has
	\(
	D(I)\subseteq \overline{I}.
	\)
\end{remark}

In Proposition~\ref{d}(i), the Cohen--Macaulay assumption is essential:

\begin{remark}
	Let $R:=k[[X,Y]]/(X^2,XY)$ and let $n>0$. Then 
	\(
	D(\fm^n) \subseteq \overline{\fm^n}
	\quad\text{if and only if}\quad n=1.
	\)
\end{remark}

\begin{proof}
	First note that $\fm=(x,y)R$ and $x^2=xy=0$ in $R$. Hence $x\in (0:_R\fm)$, so
	\[
	D(\fm)=\Ann\!\big(\Hom_R(R/\fm,R)\big)=\Ann(0:_R\fm)=\Ann(x)=\fm.
	\]
	Since $\fm=\overline{\fm}$, the statement holds for $n=1$.
	Now assume $n>1$. Because $x^2=xy=0$, every product involving $x$ and at least one additional factor vanishes. Thus, for $n\ge 2$ we have
	\(
	\fm^n=(x,y)^nR=y^nR.
	\)
	Moreover, $\depth(R)=0$, so $\grade(\fm^n,R)=0$. Hence
	\[
	\Hom_R(R/\fm^n,R)=(0:_R \fm^n)=(0:_R y^n).
	\]
	Since $xy^n=0$ and $y^n\neq 0$, we obtain $(0:_R y^n)=xR$. Therefore
	\[
	D(\fm^n)=\Ann(xR)=(0:_R x)=\fm.
	\]
	
	On the other hand, $\fm^n=y^nR$ is integrally closed (being principal in the reduced ring $R/xR\cong k[[Y]]$), so
	\(
	\overline{\fm^n}=\fm^n=y^nR.
	\)
	Consequently,
	\[
	D(\fm^n)=\fm \nsubseteq y^nR=\fm^n=\overline{\fm^n}
	\]
	for all $n>1$, which completes the proof.
\end{proof}

\section{More on ideals of finite projective dimension}
By $\chi(M)$ we denote the Euler characteristic of a module $M$ of finite projective dimension, and by $\Tr(-)$ the Auslander transpose.

\begin{proposition}
	If $\pd_R(M)=1$, then
	\(
	\Ext^1_R(M,R)\cong \Tr(M).
	\)
	In particular, if $R$ is generically Gorenstein, then
	\[
	\Ann\!\big(\Ext^1_R(M,R)\big)=\Ann(M)
	\;\Longleftrightarrow\;
	M \text{ is not faithful}
	\;\Longleftrightarrow\;
	\chi(M)=0 .
	\]
\end{proposition}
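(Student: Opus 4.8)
The plan is to separate the two assertions: first the transpose identity, which is a one-line dualization, and then the three-way equivalence, which I would organize as a cycle of implications steered by the numerical invariant $\chi(M)$.

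For the first claim, I would take a minimal free resolution $0 \to P_1 \xrightarrow{f} P_0 \to M \to 0$, available since $\pd(M)=1$. Applying $(-)^{\ast}=\Hom_R(-,R)$ and using $\Ext^{i}(P_j,R)=0$ for $i>0$ produces the four-term exact sequence
\[
0 \to M^{\ast} \to P_0^{\ast} \xrightarrow{f^{\ast}} P_1^{\ast} \to \Ext^1(M,R) \to 0,
\]
so $\Ext^1(M,R)=\coker(f^{\ast})=\Tr(M)$ directly from the definition of the Auslander transpose. This requires no hypothesis on $R$.

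Next I would record the role of $\chi(M)$. For every $\mathfrak q\in\Ass(R)$ the local ring $R_{\mathfrak q}$ has depth zero, so Auslander--Buchsbaum forces $M_{\mathfrak q}$ to be \emph{free}, and the localized resolution then reads off its rank as $b_0-b_1=\chi(M)$; thus $\chi(M)$ is the common rank of $M$ at all associated primes, and $\Tr(M)=\Ext^1(M,R)$ vanishes at every minimal prime (the conceptual content of generic Gorenstein-ness, which guarantees $R_{\mathfrak p}$ is self-injective there). From this I get \textbf{(b)$\Leftrightarrow$(c)}: if $\chi(M)=0$ then $M_{\mathfrak q}=0$ for all $\mathfrak q\in\Ass(R)$, so $\Ann(M)$ escapes every associated prime and prime avoidance yields a nonzerodivisor inside it, whence $M$ is not faithful; conversely if $\chi(M)>0$ and $0\neq a\in\Ann(M)$, I pick $\mathfrak q\in\Ass(Ra)\subseteq\Ass(R)$, where $a\neq 0$ in $R_{\mathfrak q}$ but $aM_{\mathfrak q}=0$ with $M_{\mathfrak q}$ free of positive rank --- a contradiction, so $M$ is faithful.

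For \textbf{(a)$\Leftrightarrow$(c)} I split on $\chi(M)$. When $\chi(M)=0$ the presentation matrix $A$ of $f$ is square of size $n$, and the freeness at associated primes shows $A_{\mathfrak q}$ is an isomorphism for each $\mathfrak q\in\Ass(R)$, so $d:=\det A$ avoids every associated prime and is a nonzerodivisor. The adjugate relation $\operatorname{adj}(A)A=dI_n$ then gives the colon description $\Ann(\coker A)=(d):I_{n-1}(A)$; since $\Tr(M)=\coker(A^{\mathsf T})$ and minors are transpose-invariant, $\Ann(\Tr M)=(d):I_{n-1}(A^{\mathsf T})=(d):I_{n-1}(A)=\Ann(M)$, giving (a). When $\chi(M)>0$, $M$ is faithful by the previous paragraph, so $\Ann(M)=0$, whereas the Buchsbaum--Eisenbud acyclicity criterion applied to the injective $A$ yields $\grade\,I_{b_1}(A)\geq 1$, so $\Ann(\Tr M)\supseteq \Fitt_0(\Tr M)=I_{b_1}(A)\neq 0$; hence (a) fails. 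Combining the two halves gives (a)$\Leftrightarrow$(b)$\Leftrightarrow$(c).

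The hard part will be the balanced case $\chi(M)=0$ of the annihilator equivalence: a priori $\coker A$ and $\coker A^{\mathsf T}$ share only their Fitting ideals and support, \emph{not} their annihilators, so the identity $\Ann(\Tr M)=\Ann(M)$ is not formal. The decisive step is the colon-ideal formula $\Ann(\coker A)=(\det A):I_{n-1}(A)$, which is transpose-symmetric \emph{once one knows $\det A$ is a nonzerodivisor}; it is precisely the generically Gorenstein hypothesis, through freeness of $M$ at the associated primes, that secures this nonzerodivisor property. I expect the careful justification of that colon formula, and the clean reduction of faithfulness to behaviour at $\Ass(R)$, to be where the proof must be written most attentively.
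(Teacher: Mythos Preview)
Your proof of the transpose identity is exactly the paper's, and your three-way equivalence is correct, but the route you take for (a)$\Leftrightarrow$(c) is genuinely different from the paper's.

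For the case $M$ not faithful, the paper argues abstractly: $\pd(M)<\infty$ together with $\Ann(M)\neq 0$ forces $\grade(M)>0$ (citing Peskine--Szpiro), so $M^{\ast}=0$ and $M$ is perfect of grade $1=\pd(M)$; then the standard double-Ext isomorphism $\Ext^1(\Ext^1(M,R),R)\cong M$ for perfect modules immediately yields $\Ann(\Ext^1(M,R))=\Ann(M)$. For $M$ faithful, the paper simply observes $\Ext^1(M,R)$ has a nontrivial annihilator (it vanishes at all associated primes) while $\Ann(M)=0$. The equivalence (b)$\Leftrightarrow$(c) is dispatched by a citation to Vasconcelos.

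You instead compute: in the balanced case $\chi(M)=0$ you use the Buchsbaum--Eisenbud colon formula $\Ann(\coker A)=(\det A):I_{n-1}(A)$ for the square presentation matrix, and exploit its transpose-invariance to equate $\Ann(M)$ with $\Ann(\Tr M)$. Your proof is more self-contained---you prove the (b)$\Leftrightarrow$(c) equivalence by hand rather than citing it, and you never invoke the perfect-module duality machinery---but it is longer and depends on justifying the colon formula and the nonzerodivisor property of $\det A$ carefully, as you yourself flag. The paper's route is shorter and more structural; yours is more explicit and, in principle, gives finer information (the actual formula for the annihilator). Both are valid.
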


\begin{proof}
	Let
	\(
	0 \rightarrow R^n \xrightarrow{f} R^m \rightarrow M \rightarrow 0
	\)
	be a projective presentation of $M$. Dualizing, we obtain an exact sequence
	\[
	0 \longrightarrow M^* \longrightarrow R^m \xrightarrow{f^*} R^n \longrightarrow \Ext^1_R(M,R) \longrightarrow 0.
	\]
	By definition of the transpose, $\Tr(M)=\coker(f^*)$, hence
	\(
	\Ext^1_R(M,R)\cong \Tr(M).
	\)
	Now assume $R$ is generically Gorenstein.

	($\Rightarrow$)
	Suppose $\Ann(\Ext^1_R(M,R))=\Ann(M)$. If $M$ were faithful, then $\Ann(M)=0$, hence $\Ann(\Ext^1_R(M,R))=0$. But $\Ext^1_R(M,R)$ is supported in positive codimension (since $\pd(M)=1$), so over a generically Gorenstein ring it must have a nonzero annihilator — a contradiction. Thus $M$ is not faithful.
	
	\medskip
	\noindent
	($\Leftarrow$)
	Assume $M$ is not faithful, so $\Ann(M)\neq 0$. Since $\pd(M)<\infty$, a result of Peskine–Szpiro \cite[I.4.14]{PS} gives $\grade(M)>0$. Hence $\Hom_R(M,R)=0$, and the presentation above shows that $M$ is perfect of grade one. In particular, the natural biduality isomorphism yields
	\(
	\Ext^1_R(\Ext^1_R(M,R),R)\cong M.
	\)
	Taking annihilators and using that $R$ is generically Gorenstein (so double annihilators behave well), we obtain
	\(
	\Ann(M)=\Ann\!\big(\Ext^1_R(M,R)\big).
	\)

	Finally, the equivalence with $\chi(M)=0$ follows from \cite[Proposition 2.26]{wol}, which states that for modules of projective dimension one over a generically Gorenstein ring,
	\[
	\chi(M)>0 \;\Longleftrightarrow\; M \text{ is faithful}.
	\]
	This completes the proof.
\end{proof}

Recall that
$F:R\to R$ is the Frobenius map. Each iteration of $F_n$ of $F$ defines a new $R$-module structure on the set $R$, and this $R$-module is denoted by $\up{F_n}R$, where $a\cdot b = a^{p^{n}}b$ for $a, b \in R$. By $I^{[p]}$ we mean $\langle x^p:x\in I\rangle$.
\begin{notation}
	\begin{enumerate}
		\item[(i)] By
		$\beta_t:=\beta_t(-)$ we mean the $t$-th Betti number of $(-)$.
		\item[(ii)]By
		$\phi_t:=\phi_t(-)$ we mean the $t$-th morphism $\phi_t:R^{\beta_{t+1}}\to R^{\beta_{t}}$ in a minimal free resolution of $(-)$.
		
		\item[(iii)] Let $ R^s \stackrel{\phi_0}\lo R^r\to M \to0$ be a presentation, and recall that ${\phi_0}$ represents with a matrix $[{\phi_0}]$. The i-th Fitting invariant of $M$ is  
		$\Fitt_i(M) := I_{r-i}({\phi_0})$ that is, determinants of sub-matrices of size $r-i$ of $[{\phi_0}]$.\end{enumerate}
\end{notation}
\begin{example}\label{two}
	Let $P\in\Spec(R)$ be a height 2 perfect ideal in a ring $R$ of prime characteristic $p > 0$, and let $q := p^n$. Then
	\(
	D({P^{[q]}})
	= P^{[q]}.
	\)\end{example}

\begin{proof}
	Recall that $R$ is a domain since $P$ is a prime of finite projective dimension.
	Let
	\[
	\varepsilon := \Ext^2(R/P, R), \quad \varepsilon^{[q]} := \Ext^2(R/P^{[q]}, R).
	\]
	We have
	\[
	0\longrightarrow R^m \xrightarrow{(a_{ij})} R^\ell \longrightarrow R \longrightarrow \frac{R}{P} \longrightarrow 0,
	\]
	which follows by taking $m = \ell-1$.
	Recall that
	\(
	\beta_0(\varepsilon) - \beta_1(\varepsilon) + 1 = \operatorname{grad}(\varepsilon),
	\)
	and note that
	\(\operatorname{ht}(P)=
	\operatorname{ht}(P^{[q]}) = 2.
	\)
	We tensor with $\up{F_n}R $ and obtain
	\[
	0\longrightarrow R^m \xrightarrow{(a_{ij}^{[q]})} R^\ell \longrightarrow R
	\longrightarrow \frac{R}{P^{[q]}} \longrightarrow 0,
	\]where $q=p^n$.
	Taking dual, 
	this gives
	\[
	0 \longrightarrow R \longrightarrow R^\ell \xrightarrow{(a_{ji}^{[q]})} R^m
	\longrightarrow \operatorname{Ext}^2_R\! (\frac{R}{P^{[q]}}, R )
	\longrightarrow 0.
	\]

	Also,
	\[
	\beta_0(\varepsilon^{[q]}) - \beta_1(\varepsilon^{[q]}) + 1
	= \operatorname{grad}(\varepsilon^{[q]}).
	\]
	
	Then
	\[
	\operatorname{Ann}(\varepsilon^{[q]})
	= \operatorname{Fitt}_0(\varepsilon^{[q]})
	= \operatorname{Fitt}_0(a_{ji}^{[q]})
	= \operatorname{Ann}(\varepsilon)^{[q]}.
	\]
	
	Clearly, as $P$ is radical and annihilates $\varepsilon$, we have
	\(
	P = \operatorname{Ann}(\varepsilon) 
	\) (see Fact \ref{k}).
	Hence, \(	D({P^{[q]}})
	=  \operatorname{Ann}(\varepsilon)^{[q]}=P^{[q]}.
	\)
\end{proof}

The following fact is well-known to experts:
\begin{fact}\label{FQ}
	Suppose $I\subseteq J$ and for any $\fp\in\Ass(R/I)$ we have $I_\fp= J_\fp$, then $I=J$.
\end{fact}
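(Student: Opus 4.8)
The statement to prove is Fact~\ref{FQ}: if $I \subseteq J$ and $I_\fp = J_\fp$ for every $\fp \in \Ass(R/I)$, then $I = J$.

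\medskip

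The plan is to analyze the quotient module $J/I$ and show it vanishes by a localization argument keyed to associated primes. First I would observe that, since $I \subseteq J$, the inclusion $J/I \hookrightarrow R/I$ realizes $J/I$ as a submodule of $R/I$; hence $\Ass(J/I) \subseteq \Ass(R/I)$ by the standard behavior of associated primes under submodules. The whole point is then to show $J/I = 0$, and the cleanest route is to suppose otherwise and derive a contradiction. If $J/I \neq 0$, then $\Ass(J/I) \neq \emptyset$ (here the Noetherian hypothesis on $R$, in force throughout the paper, is what guarantees a finitely generated nonzero module has an associated prime), so we may pick some $\fp \in \Ass(J/I) \subseteq \Ass(R/I)$.

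\medskip

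Next I would localize the short exact sequence $0 \to J/I \to R/I$ at this $\fp$. Localization is exact, and by hypothesis $I_\fp = J_\fp$, so $(J/I)_\fp = J_\fp / I_\fp = 0$. On the other hand, because $\fp \in \Ass(J/I)$, there is an element of $J/I$ whose annihilator is exactly $\fp$; equivalently $R/\fp$ embeds into $J/I$, and localizing this embedding at $\fp$ gives a nonzero module $(R/\fp)_\fp = \kappa(\fp)$ inside $(J/I)_\fp$. This forces $(J/I)_\fp \neq 0$, contradicting the vanishing just obtained. Therefore $J/I = 0$, i.e. $I = J$.

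\medskip

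The argument is almost entirely formal, so I do not expect a serious obstacle; the one point requiring care is the direction of the argument. It is tempting to try to prove $I = J$ directly by checking the equality locally at \emph{all} primes, but the hypothesis only controls localizations at the associated primes of $R/I$, not at arbitrary primes, so that naive approach fails. The correct organizing idea is instead the key fact that a nonzero finitely generated module over a Noetherian ring must have a nonempty set of associated primes, combined with $\Ass(J/I) \subseteq \Ass(R/I)$; this is precisely what lets the hypothesis about $\Ass(R/I)$ do its work. Once that containment of associated primes is in hand, the contradiction via localization is immediate and the proof is short.
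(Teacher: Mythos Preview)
Your argument is correct and is the standard one. The paper itself does not supply a proof of this fact at all; it merely records it as ``well-known to experts,'' so there is nothing to compare against. Your route via $\Ass(J/I)\subseteq\Ass(R/I)$ together with the nonemptiness of associated primes for nonzero finitely generated modules over a Noetherian ring is exactly the expected justification.
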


\begin{proposition}\label{pp}
	Let   $P\in\Spec(R)$  be a  perfect ideal  of finite projective dimension $t$ over a ring $R$ of prime characteristic $p > 0$, let $q := p^n$. 
	Suppose one of the following holds:
	\begin{enumerate}
		\item[(i)] 
		\( \beta_t(R/P) - \beta_{t-1}(R/P) + 1 = \operatorname{grad}(P,R)
		\), or
		\item[(ii)] \(
		\beta_t(R/P) = \beta_{t-1}(R/P) \),
		\item[(iii)] $\beta_t(R/P) =1$.
	\end{enumerate}
	Then
	\(D(P^{[q]})=
	 P^{[q]}, 	
	\) where  $g=t$ is the grade of $P$. \end{proposition}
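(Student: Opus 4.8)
The plan is to carry out the computation of Example \ref{two} one grade higher, isolating the point at which the three hypotheses are used. Since $P$ is a perfect prime, $\pd(R/P)=\grade(P)=t$, and $R/P$ admits a minimal free resolution
\[
0\to F_t\xrightarrow{\ \phi_{t-1}\ }F_{t-1}\to\cdots\to F_0=R\to R/P\to 0,\qquad \beta_i:=\beta_i(R/P)=\rank F_i.
\]
Set $\varepsilon:=\Ext^t(R/P,R)$ and $\varepsilon^{[q]}:=\Ext^t(R/P^{[q]},R)$. Dualizing into $R$ exhibits $\varepsilon=\coker(\phi_{t-1}^{\ast})$, presented by the $\beta_t\times\beta_{t-1}$ matrix $\phi_{t-1}^{\ast}$, so that $\Fitt_0(\varepsilon)=I_{\beta_t}(\phi_{t-1}^{\ast})$. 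By Fact \ref{k} applied to $P$ one has $\Ann(\varepsilon)=\rad(P)=P$, and applied to $P^{[q]}$ (which will be seen to have grade $t$) one has $P^{[q]}\subseteq D(P^{[q]})=\Ann(\varepsilon^{[q]})\subseteq\rad(P^{[q]})=P$. Since $\Fitt_0\subseteq\Ann$ always holds, the whole statement reduces to the single global equality $\Ann(\varepsilon)=\Fitt_0(\varepsilon)$, equivalently $\Fitt_0(\varepsilon)=P$.

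Granting that equality, the Frobenius machinery of Example \ref{two} closes the argument. As $R/P$ is perfect, the Peskine--Szpiro (Frobenius) functor $\up{F_n}R\otimes_R-$ is acyclic on its resolution and produces a minimal free resolution of $R/P^{[q]}$ whose differentials are the $\phi_i$ with entries raised to the $q$-th power; in particular $P^{[q]}$ is again perfect of grade $t$ with the same Betti numbers, so the hypotheses (i)--(iii), being conditions on those Betti numbers, also hold for $P^{[q]}$. Dualizing gives $\varepsilon^{[q]}=\coker\bigl((\phi_{t-1}^{\ast})^{[q]}\bigr)$, and the only delicate identity is that Fitting ideals commute with Frobenius: each maximal minor satisfies $\det(B^{[q]})=(\det B)^{q}$ by the freshman's dream in characteristic $p$, whence $\Fitt_0(\varepsilon^{[q]})=\Fitt_0(\varepsilon)^{[q]}$. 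Applying the reduction of the first paragraph to $P^{[q]}$ then yields
\[
D(P^{[q]})=\Ann(\varepsilon^{[q]})=\Fitt_0(\varepsilon^{[q]})=\Fitt_0(\varepsilon)^{[q]}=P^{[q]}.
\]

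The main obstacle is thus the global equality $\Fitt_0(\varepsilon)=P$, and this is exactly what the three hypotheses deliver. One always has $\sqrt{\Fitt_0(\varepsilon)}=\sqrt{\Ann(\varepsilon)}=P$, so by Fact \ref{FQ} it suffices to match $\Fitt_0(\varepsilon)$ with $P$ after localizing at the single prime $P$, provided $\Fitt_0(\varepsilon)$ is unmixed with $\Ass=\{P\}$. Hypothesis (i) is precisely the maximal-grade (Buchsbaum--Rim) condition on $\phi_{t-1}^{\ast}$ that makes $\Fitt_0(\varepsilon)=I_{\beta_t}(\phi_{t-1}^{\ast})$ a perfect ideal of grade $t$, hence unmixed; hypotheses (iii) ($\beta_t=1$, so $\varepsilon$ is cyclic) and (ii) ($\beta_t=\beta_{t-1}$, a square presentation) are the degenerate cases in which $\Ann(\varepsilon)=\Fitt_0(\varepsilon)$ is immediate. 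For the local matching I would use that $\pd_R(R/P)<\infty$ forces $R_P$ to be regular local of dimension $t$, with $P_P$ generated by a regular system of parameters; Koszul self-duality then gives $\varepsilon_P\cong R_P/P_P$ and $\varepsilon^{[q]}_P\cong R_P/P_P^{[q]}$, both cyclic, so $\Fitt_0(\varepsilon)_P=P_P$. I expect the determinantal and grade bookkeeping behind case (i)---verifying perfection of the maximal-minor ideal and matching it with $P$---to be the most delicate part; indeed one may bypass the Fitting ideals altogether by computing $D(P^{[q]})_P=\Ann_{R_P}(\varepsilon^{[q]}_P)=P_P^{[q]}$ directly and invoking Fact \ref{FQ} with $\Ass(R/P^{[q]})=\{P\}$, a route that makes the result look robust against the precise Betti hypotheses.
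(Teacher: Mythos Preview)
Your plan for cases (i) and (iii) is essentially the paper's, and the Frobenius transport via $\Fitt_0(\varepsilon^{[q]})=\Fitt_0(\varepsilon)^{[q]}$ is correct. The gap is in case (ii). A square presentation does \emph{not} give $\Ann(\varepsilon)=\Fitt_0(\varepsilon)$: for $M=R/(a)\oplus R/(b)$ presented by $\mathrm{diag}(a,b)$ one has $\Fitt_0(M)=(ab)$ while $\Ann(M)=(a)\cap(b)$, and these differ in general. What a square $n\times n$ presentation does give (via the adjugate) is the Buchsbaum--Eisenbud formula $\Ann(\varepsilon)=\bigl(\Fitt_0(\varepsilon):\Fitt_1(\varepsilon)\bigr)$, and this colon does \emph{not} commute with $(-)^{[q]}$ globally. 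The paper handles exactly this: it writes $\Ann(\varepsilon^{[q]})=\bigl(A^{[q]}:B^{[q]}\bigr)$ with $A=\Fitt_0$, $B=\Fitt_1$, then localizes at $P$, where $R_P$ is regular so Frobenius is flat and colons do commute with $(-)^{[q]}$; one gets $\bigl(A^{[q]}:B^{[q]}\bigr)R_P=(A:B)^{[q]}R_P=P^{[q]}R_P$, and finishes with the $P$-primary component and Fact~\ref{FQ}.

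Your final ``alternative route'' is in fact the correct repair, and is close to (indeed slightly cleaner than) the paper's argument: since $R_P$ is regular local of dimension $t$, Koszul self-duality gives $\varepsilon^{[q]}_P\cong R_P/P_P^{[q]}$, hence $D(P^{[q]})_P=P^{[q]}R_P$, and Fact~\ref{FQ} with $\Ass(R/P^{[q]})=\{P\}$ closes the global statement. You should promote this from an afterthought to the actual proof of~(ii), and justify that $P^{[q]}$ is $P$-primary (the paper uses this too).
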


\begin{proof}
	(i) This is similar to Example \ref{two}.

	(ii) Let
	$
	\varepsilon := \Ext^g(R/P, R) $ and $\varepsilon^{[q]} := \Ext^g(R/P^{[q]}, R).
	$
	Thus $\Ann(	\varepsilon^{[q]}) \subseteq \sqrt{P^{[q]}} = P$. Denote the last matrix in the minimal free resolution of $R/P$ with $(a_{ij})$. By the work
	of Buchsbaum-Eisenbud \cite{BE},
	we know 
	\[
	\operatorname{Ann}(\varepsilon)
	= \left(\operatorname{Fitt}_0(a_{ji} ):\operatorname{Fitt}_1(a_{ji})\right)
	.\] Since
	$P$ is radical and annihilates $\varepsilon$, we have
	\(
	P = \operatorname{Ann}(\varepsilon) 
	\) (see Fact \ref{k}).
	Similarly, since
	$\beta_t(R/P^{[q]}) = \beta_{t-1}(R/P^{[q]})  $ we have

	\[
	\operatorname{Ann}(\varepsilon^{[q]})
	= \left(\operatorname{Fitt}_0(a_{ji}^{[q]}):\operatorname{Fitt}_1(a_{ji}^{[q]})\right)
	.
	\]
	Set $A:=\operatorname{Fitt}_0(a_{ji} )$ and $B:=\operatorname{Fitt}_1(a_{ji} )$.
	Also, we prove
	\[
	P^{[q]} = \Ann(\varepsilon)^{[q]} = (A : B)^{[q]} \subseteq \Ann(\varepsilon^{[q]}) = (A^{[q]}: B^{[q]}) \subseteq \sqrt{P^{[q]}} = P.
	\]
	Recall that $R_P$ is regular, hence by a classic result of Kunz (see \cite[Corollary 8.2.8]{BH}),  its Frobenius map is flat, and consistently commutes with colon ideals. Also, by Peskine-Szpiro \cite[Proposition 8.2.5]{BH}, one has $(\up{F_n}R)_Q=\up{F_n}(R_Q)$ for all $Q\in\Spec(R)$.
	Let $L := (A^{[q]}:B^{[q]})$ and since $\min(L) = \{P\}$, then its $P$-primary component is unique, and denoted by $L(P)$. This is given by
	\[
	L(P) =  LR_P \cap R = (A^{[q]}: B^{[q]}) R_P \cap R = (A:B)^{[q]}R_P \cap R = P^{[q]}R_P \cap R= P^{[q]},
	\] because $P^{[q]}$ is $P$-primary. {But}
	\[
	[ P^{[q]} \subseteq L \subseteq  L(P) = P^{[q]}] \stackrel{\ref{FQ}}\implies L = P^{[q]},
	\]
	as claimed

	(iii) This is similar as above item, and we left it to the reader.	
\end{proof}

\begin{proposition}
	Let $I$ be a radical ideal of height $h$ so that $h=\pd (R/I)$.
	Suppose $\beta_h(R/I)=1$. The following holds.
	
	\begin{enumerate}
		\item[(i)]
		\( D(I)=I.
		\) In particular, $(\phi_{h-1})^\ast=\phi_0$.
		\item[(ii)]
		If $R$ is Gorenstein, then $R/I$ is Gorenstein. 
	\end{enumerate}
	
\end{proposition}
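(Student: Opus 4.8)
The plan is to route everything through the minimal free resolution of $R/I$ and its $R$-dual, exploiting the hypothesis $\beta_h(R/I)=1$ together with the radicality of $I$. First I would record that $R/I$ is a perfect $R$-module: here $g=\grade(I)=h$ (so the relevant Ext sits in degree $h$) and $\pd(R/I)=h$, so $R/I$ is perfect of grade $h$ and its minimal free resolution has the shape
\[
0 \to F_h \xrightarrow{\phi_{h-1}} F_{h-1} \to \cdots \to F_1 \xrightarrow{\phi_0} F_0=R \to R/I \to 0,
\]
with $F_h=R^{\beta_h}=R$ by hypothesis. Dualizing into $R$ and using perfection, the dual complex is acyclic except at the top, where it resolves $\Ext^h(R/I,R)$; since $F_h^\ast=R$, this gives $\Ext^h(R/I,R)\cong\coker(\phi_{h-1}^\ast)=R/\im(\phi_{h-1}^\ast)$, a cyclic module. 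This mirrors the Fitting-ideal computation already carried out in Proposition \ref{pp}(iii).

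For the equality $D(I)=I$ in (i) I would simply invoke Fact \ref{k}, which yields $I\subseteq D(I)\subseteq\rad(I)$; since $I$ is radical, $\rad(I)=I$ and hence $D(I)=I$. The structural consequence then follows formally: because $\Ext^h(R/I,R)=R/\im(\phi_{h-1}^\ast)$ is cyclic, its annihilator equals $\im(\phi_{h-1}^\ast)$, so
\[
\im(\phi_{h-1}^\ast)=\Ann\!\bigl(\Ext^h(R/I,R)\bigr)=D(I)=I=\im(\phi_0).
\]
Thus the dual complex is itself a free resolution of $R/I$, and it is minimal since transposing a matrix with entries in $\fm$ produces a matrix with entries in $\fm$. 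By uniqueness of minimal free resolutions there is an isomorphism of complexes matching it with the original resolution; comparing the two first syzygy maps onto $I$ (the degree-$0$ terms $R$ being identified up to a unit) identifies $(\phi_{h-1})^\ast$ with $\phi_0$, which is the asserted self-duality.

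For part (ii), assume $R$ is Gorenstein, so $R$ is Cohen-Macaulay with $\omega_R\cong R$. Then $R/I$, being perfect, is Cohen-Macaulay of codimension $h$, and its canonical module is $\omega_{R/I}\cong\Ext^h_R(R/I,\omega_R)\cong\Ext^h_R(R/I,R)$. By the computation in part (i) this last module is isomorphic to $R/I$, so $\omega_{R/I}\cong R/I$. A Cohen-Macaulay local ring whose canonical module is isomorphic to the ring itself (equivalently, of type one) is Gorenstein, whence $R/I$ is Gorenstein.

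The step I expect to require the most care is the bookkeeping around perfection and self-duality: one must justify that the dual complex is genuinely a minimal free resolution of $\Ext^h(R/I,R)$, which rests squarely on $R/I$ being perfect, and then pin down the "up to isomorphism" in $(\phi_{h-1})^\ast=\phi_0$ by a clean application of uniqueness of minimal resolutions. The remaining ingredients—the annihilator computation for the cyclic Ext module and the canonical-module identification in (ii)—are routine once perfection is established.
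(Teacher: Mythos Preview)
Your proposal is correct and follows the same route as the paper: identify $\Ext^h(R/I,R)$ with $\coker(\phi_{h-1}^\ast)=R/\im(\phi_{h-1}^\ast)$, use radicality (via Fact \ref{k}) to pin down its annihilator as $I$, and for (ii) recognize this Ext as the canonical module of the Cohen--Macaulay quotient $R/I$. Your exposition is in fact slightly more careful than the paper's---you obtain $D(I)=I$ directly from Fact \ref{k} before touching the resolution, and you justify $(\phi_{h-1})^\ast=\phi_0$ through uniqueness of minimal free resolutions rather than leaving that identification implicit.
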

\begin{proof}
(i)	Consider a minimal free resolution
	\[
	0 \longrightarrow R^{\beta_h} \stackrel{\phi_{h-1}}\longrightarrow \cdots
	\longrightarrow R^{\beta_1} \stackrel{\phi_0}\longrightarrow R \longrightarrow R/I \longrightarrow 0.
	\]
	Applying $\operatorname{Hom}_R(-,R)$, and taking homology at the end spot, gives us
	\[
	\operatorname{Ext}^h_R(R/I,R)=\operatorname{Coker}
	\bigl(\phi_{h-1}^\ast :R^{\beta_{h-1}}\lo R^{\beta_h}=R\bigr).
	\]
	On the other hand,
	\(
	\operatorname{Ann}\! (\operatorname{Ext}^h_R(R/I,R) )=I,
	\)
	because  $I$ is radical. Combining these, we have $(\phi_{h-1})^\ast=\phi_0$.

	(ii) Suppose $R$ is Gorenstein. Then
	\[
	K_{R/I}
	=\operatorname{Ext}^h_R(R/I,R)
	\cong R/I.
	\]
	Hence $R/I$ is quasi-Gorenstein.
	Since $R$ is Gorenstein, it is Cohen--Macaulay and particularly
	satisfies the chain condition. Thus,
	by using Auslander-Buchsbaum  formula, we have\[
	\begin{array}{ll}
	\depth(R)-\depth(R/I)  &=\operatorname{pd}_R(R/I) \\
	& =h =\operatorname{ht}(I)\\
	&=\dim (R)-\dim (R/I)\\
	&=\depth(R)-\dim (R/I).
	\end{array}
	\]  
	This gives
	\(
	\depth(R/I)=\dim(R/I),
	\)
	so $R/I$ is Cohen--Macaulay.
	By~\cite[Page 120]{BH}, or even directly, 
	this implies that $R/I$ is Gorenstein.
\end{proof}

\begin{proposition}\label{6.7}
	Let $R$ be a $3$-dimensional quasi-normal ring and let $I$ be an unmixed ideal of finite projective dimension. Then
	\(
	D(I)\subseteq \overline{I}.
	\)
\end{proposition}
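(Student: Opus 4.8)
The plan is to argue by cases on $\dim(R/I)$, routing the higher-codimension cases to a statement already recorded and reserving the real work for $\Ht(I)=1$. We may assume $I$ is nonzero and proper (as $D(0)=\Ann(R)=0=\overline{0}$ and $D(R)=R$). Since $\pd_R(R/I)<\infty$ and $I\neq 0$, \cite[I.4.14]{PS} gives $g:=\grade(I)\geq 1$, so $I$ contains a nonzerodivisor and $\Ht(I)\geq 1$; combining this with $\Ht(P)+\dim(R/P)\leq\dim R=3$ for each $P\in\Min(I)$ shows $\dim(R/I)\in\{0,1,2\}$ (the value $3$ would force a height-zero minimal prime of $I$, contradicting $g\geq 1$). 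If $\dim(R/I)\leq 1$, then $I$ is unmixed of finite projective dimension with $\dim(R/I)\leq 1$, and the remark following Corollary \ref{n} gives $D(I)\subseteq\overline{I}$ directly.

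The remaining case is $\dim(R/I)=2$. Picking $P\in\Min(I)$ with $\dim(R/P)=2$ forces $\Ht(P)\leq 1$, hence $\Ht(P)=1$, and unmixedness then yields $\Ht(I)=1$; with $1\leq g\leq\Ht(I)$ this gives $g=1$. In grade one the sequence $0\to I\to R\to R/I\to 0$ yields $\Ext^1(R/I,R)\cong I^{\ast}/R$, so $D(I)=\Ann(I^{\ast}/R)=I^{\ast\ast}$ (the grade-one identification used in Fact \ref{ddd}); it therefore suffices to prove that $I$ is reflexive, which will give the sharper conclusion $D(I)=I\subseteq\overline{I}$. I would first show that finite projective dimension makes $I$ locally free in codimension $\leq 2$. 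For $P\supseteq I$ of height one, $R_P$ is a one-dimensional Cohen--Macaulay local ring by $(S_2)$ and $R_P/I_P$ has finite length, so Auslander--Buchsbaum gives $\pd_{R_P}(R_P/I_P)=1$ and $I_P$ is free. For $P\supseteq I$ of height two, $R_P$ is two-dimensional Cohen--Macaulay by $(S_2)$, and unmixedness gives $P\notin\Ass(R/I)$, hence $\depth(R_P/I_P)\geq 1$ and $\pd_{R_P}(R_P/I_P)=2-\depth(R_P/I_P)\leq 1$; since this is also $\geq\grade(I_P,R_P)=1$, again $I_P$ is free (and $I_P=R_P$ when $P\not\supseteq I$).

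Local freeness in codimension $\leq 2$ makes $I$ satisfy Serre's condition $(S_2)$ as a module: for $\Ht(P)\leq 2$ one has $\depth I_P=\depth R_P\geq\min(2,\Ht P)$ by $(S_2)$ of $R$, while at $\fm$ the same defining sequence gives $\depth I\geq\min(\depth R,\depth(R/I)+1)\geq\min(2,2)=2$, using $\depth R\geq 2$ and $\depth(R/I)\geq 1$ (since $\Ass(R/I)=\Min(I)$ consists of height-one primes, so $\fm\notin\Ass(R/I)$). Finally, as $R$ is quasi-normal, Fact \ref{fos} identifies $I^{\ast\ast}$ with $\bigcap_{P\in\Spec^1(R)}I_P$, and a torsion-free $(S_2)$-module equals this intersection; hence $I=I^{\ast\ast}$ is reflexive and $D(I)=I\subseteq\overline{I}$.

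I expect the main obstacle to lie in the case $\dim(R/I)=2$: one has to extract the codimension-$\leq 2$ local freeness of $I$ from finiteness of projective dimension---this is precisely where unmixedness supplies the positive depth of $R_P/I_P$ at height-two primes---and then upgrade the resulting $(S_2)$ property to honest reflexivity through the divisor-theoretic formula of Fact \ref{fos}, which is exactly the point at which both hypotheses $(G_1)$ and $(S_2)$ are used.
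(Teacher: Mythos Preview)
Your proof is correct and, like the paper, ultimately shows $D(I)=I$ in every case except the $\fm$-primary one; but the routes differ in two places worth noting.

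For $\dim(R/I)\le 1$ you invoke the remark following Corollary~\ref{n} as a black box, whereas the paper treats these cases directly: when $\Ht(I)=2$ it shows $R/I$ is perfect (unmixedness gives $\depth(R/I)>0$, Auslander--Buchsbaum then forces $\pd(R/I)=2=\grade(I)$), and when $\Ht(I)=3$ it cites \cite[2.15]{corso}. Your preliminary observation that $\pd(R/I)<\infty$ with $I\ne 0$ forces $\grade(I)\ge 1$ is a genuine simplification: it renders the paper's separate $\Ht(I)=0$ case vacuous.

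For $\Ht(I)=1$ both arguments conclude $D(I)=I$, but the paper's path is shorter and more direct: it localizes at each $P\in\Ass(R/I)$, uses $(G_1)$ to see that $R_P$ is one-dimensional Gorenstein, identifies $\Ext^1_{R_P}(R_P/IR_P,R_P)$ with the canonical module $K_{R_P/IR_P}$ (a faithful $R_P/IR_P$-module), deduces $D(I)_P=I_P$, and finishes with Fact~\ref{FQ}. Your argument instead proves $I$ is reflexive by extracting local freeness in codimension~$\le 2$ from $\pd<\infty$ and unmixedness, then upgrading to $(S_2)$ and invoking Fact~\ref{fos}. This is more elaborate but makes transparent exactly where the finite-projective-dimension hypothesis enters (the paper's height-one step does not actually use it). Incidentally, once you have $I_P$ free for every $P\in\Ass(R/I)$ you could finish immediately with Fact~\ref{FQ} applied to $I\subseteq I^{**}$, since $(I^{**})_P=(I_P)^{**}=I_P$; this bypasses the general ``torsion-free $(S_2)$ implies reflexive'' step.
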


\begin{proof}
	Let $g=\grade(I,R)$. Since $I$ has finite projective dimension, $g=\Ht(I)$ by the Auslander–Buchsbaum formula.
	
	\medskip
	\noindent\textbf{Case 1: $\Ht(I)=0$.}
	Then $g=0$ and
	\(
	\Ext^0_R(R/I,R)=\Hom_R(R/I,R)=(0:_RI).
	\)
	Hence
	\[
	D(I)=\Ann\!\big((0:_RI)\big)=(0:(0:I)).
	\]
	Because $I$ is unmixed of height $0$, every $\fp\in\Ass(R/I)$ satisfies $\Ht(\fp)=0$. Since $R$ is quasi-normal, $R_\fp$ is zero-dimensional Gorenstein. Therefore,
	\[
	\Ann_{R_\fp}\!\big(\Ann_{R_\fp}(IR_\fp)\big)=IR_\fp
	\quad\text{for all }\fp\in\Ass(R/I)
	\]
	(see \cite[Ex.~3.2.15]{BH}). Localizing $D(I)$ at such primes gives
	\[
	D(I)_\fp=(0:(0:I))_\fp=(0:(0:IR_\fp))=IR_\fp.
	\]
	Since $I\subseteq D(I)$ and the two ideals agree after localization at all associated primes of $R/I$, Fact~\ref{FQ} yields $D(I)=I$. In particular,
	\(
	D(I)=I\subseteq \overline{I}.
	\)
	
	\medskip
	\noindent\textbf{Case 2: $\Ht(I)=1$.}
	Again $I$ is unmixed, so every $\fp\in\Ass(R/I)$ has height one. Because $R$ is quasi-normal, $R_\fp$ is a one-dimensional Gorenstein ring. Let $g=1$. Then
	\[
	\Ext^1_{R_\fp}(R_\fp/IR_\fp,R_\fp)\cong K_{R_\fp/IR_\fp},
	\]
	the canonical module of the zero-dimensional Cohen–Macaulay ring $R_\fp/IR_\fp$. Hence
	\[
	D(IR_\fp)=\Ann_{R_\fp}\!\big(\Ext^1_{R_\fp}(R_\fp/IR_\fp,R_\fp)\big)=IR_\fp.
	\]
	As in Case 1, $I\subseteq D(I)$ and equality holds after localizing at each $\fp\in\Ass(R/I)$, so Fact~\ref{FQ} gives $D(I)=I$. Thus
	\(
	D(I)=I\subseteq \overline{I}.
	\)
	
	\medskip
	\noindent\textbf{Case 3: $\Ht(I)=2$.}
	Set $M:=R/I$. Then $\dim M=1$ and $M$ is unmixed. Prime avoidance shows that
	\(
	\fm \nsubseteq \bigcup_{\fp\in\Ass(M)} \fp,
	\)
	so $\depth(M)>0$. Since $I$ has finite projective dimension,
	\[
	\pd_R(M)=\depth(R)-\depth(M)\le 3-1=2.
	\]
	On the other hand,
	\[
	\grade(I,R)=\inf\{i:\Ext^i_R(M,R)\neq 0\}\le \Ht(I)=2.
	\]
	Thus $\pd_R(M)=\grade(I,R)=2$, so $M$ is perfect. Therefore
	\[
	\Ext^2_R(R/I,R)\cong \Hom_R(R/I,R/I)\cong R/I,
	\]
	and
	\[
	D(I)=\Ann\!\big(\Ext^2_R(R/I,R)\big)=\Ann(R/I)=I.
	\]
	Hence again $D(I)=I\subseteq \overline{I}$.
	
	\medskip
	\noindent\textbf{Case 4: $\Ht(I)=3$.}
	Then $I$ is $\fm$-primary. Since $R$ is quasi-normal (in particular, analytically unramified and normal on the punctured spectrum), Corso–Huneke–Vasconcelos type results imply
	\(
	D(I)=\Ann\!\big(\Ext^3_R(R/I,R)\big)\subseteq \overline{I}
	\)
	(see \cite[2.15]{corso}).
	
	\medskip
	In all possible heights, we obtain $D(I)\subseteq \overline{I}$, completing the proof.
\end{proof}

\begin{fact}[Bergman, see \cite{berg}]
	Suppose $R_1$ and $R_2$ are graded $k$-algebras over a field $k$.
	If $E_1$ is a faithful $R_1$-module and $E_2$ is a faithful $R_2$-module, then
	\(
	E_1 \otimes_k E_2
	\)
	is a faithful $R_1 \otimes_k R_2$-module.
\end{fact}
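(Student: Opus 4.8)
The plan is to prove the statement directly by showing $\Ann_{R_1\otimes_k R_2}(E_1\otimes_k E_2)=0$, reducing everything to linear algebra over the base field $k$. The whole argument rests on two facts: ``faithful'' means the annihilator is zero, and over a field the tensor product splits along a basis, so a single tensor relation can be decoupled into a family of relations. I would not need any finiteness or, in fact, the grading hypothesis; it is purely a field-coefficient phenomenon.

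First I would fix an arbitrary element $a\in R_1\otimes_k R_2$ annihilating $E_1\otimes_k E_2$ and write it as $a=\sum_{i=1}^n a_i\otimes b_i$ with $a_i\in R_1$ and with the $b_i\in R_2$ chosen to be $k$-linearly independent; this is always possible after re-expressing the second tensor factors through a basis of their $k$-span. The goal then becomes to show that each $a_i$ annihilates $E_1$, because faithfulness of $E_1$ over $R_1$ forces $a_i=0$ and hence $a=0$.

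Next I would fix $e_1\in E_1$ and a $k$-basis $\{f_j\}$ of $E_1$, and expand $a_i e_1=\sum_j c_{ij}f_j$ with scalars $c_{ij}=c_{ij}(e_1)\in k$ (finitely many nonzero). Applying $a$ to $e_1\otimes e_2$ for an arbitrary $e_2\in E_2$ gives
\[
0=a(e_1\otimes e_2)=\sum_i (a_ie_1)\otimes(b_ie_2)=\sum_j f_j\otimes\Bigl(\sum_i c_{ij}\,b_i e_2\Bigr).
\]
Since $k$ is a field, $E_1$ is free as a $k$-module, so $E_1\otimes_k E_2=\bigoplus_j (f_j\otimes_k E_2)$ as $k$-vector spaces and each component must vanish: $\sum_i c_{ij}\,b_i e_2=0$ for every $j$. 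Letting $e_2$ range over $E_2$ shows $\sum_i c_{ij}b_i\in\Ann_{R_2}(E_2)=0$, and the $k$-linear independence of the $b_i$ then gives $c_{ij}=0$ for all $i,j$, i.e.\ $a_ie_1=0$. As $e_1$ was arbitrary, $a_i\in\Ann_{R_1}(E_1)=0$.

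The only step requiring care — and the one I regard as the crux — is the splitting $E_1\otimes_k E_2=\bigoplus_j (f_j\otimes_k E_2)$, which is exactly what converts the single relation $a(e_1\otimes e_2)=0$ into the decoupled family $\sum_i c_{ij}b_ie_2=0$ indexed by the basis of $E_1$; this is where the field hypothesis on $k$ is genuinely used. Everything else is bookkeeping with finite sums, so no grading or finite-generation assumption enters this direction of the argument.
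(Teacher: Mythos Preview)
Your proof is correct. The paper does not actually supply a proof of this Fact; it is stated with a citation to Bergman's preprint \cite{berg}, so there is no in-paper argument to compare against. Your direct linear-algebra argument over the field $k$---writing $a=\sum a_i\otimes b_i$ with the $b_i$ linearly independent, then decoupling via a $k$-basis of $E_1$---is the standard elementary route, and your observation that the grading plays no role is correct: the claim holds for arbitrary $k$-algebras and arbitrary (not necessarily finitely generated or graded) modules. The paper does offer, in Discussion~\ref{dist}, an alternative way to reach the application in Proposition~\ref{pt} without invoking Bergman, namely through Euler characteristics and the equivalence between faithfulness and $\chi>0$ for modules of finite projective dimension; but that is a bypass of the Fact rather than a proof of it.
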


\begin{proposition}\label{pt}
	Suppose $R_1, R_2$ are standard graded $k$-algebras over a field $k$, and let
	$I_1 \subset R_1$, $I_2 \subset R_2$ be ideals such that
	\[
	\Ann_{R_i}\!\big(\Ext^{d_i}_{R_i}(R_i/I_i,R_i)\big)= I_i,
	\quad \text{where } d_i:=\pd_{R_i}(R_i/I_i).
	\]
	Set $S := R_1 \otimes_k R_2$. Then
	\[
	\Ann_S\!\Big(
	\Ext^{d_1+d_2}_S\big(S/(I_1S+I_2S),S\big)
	\Big)
	= (I_1S+I_2S).
	\]
\end{proposition}

\begin{proof}
	Let $M_i = R_i/I_i$. Since $\pd_{R_i}(M_i)=d_i<\infty$, each $\Ext^{d_i}_{R_i}(M_i,R_i)$ is a nonzero finitely generated $R_i$-module whose annihilator is $I_i$ by assumption.
	
	A standard Künneth-type argument for Ext over tensor products (see, e.g., the proof of \cite[4.2]{HHS}) yields a natural isomorphism
	\[
	\Ext^{d_1+d_2}_S(S/(I_1S+I_2S),S)
	\;\cong\;
	\Ext^{d_1}_{R_1}(M_1,R_1)\otimes_k
	\Ext^{d_2}_{R_2}(M_2,R_2).
	\]
	Set $E_i:=\Ext^{d_i}_{R_i}(M_i,R_i)$. Then $\Ann_{R_i}(E_i)=I_i$, so $E_i$ is faithful as an $R_i/I_i$-module, and hence also as an $R_i$-module modulo $I_i$.
	By Bergman’s fact, $E_1\otimes_k E_2$ is faithful over
	\[
	(R_1/I_1)\otimes_k (R_2/I_2)
	\;\cong\;
	S/(I_1S+I_2S).
	\]
	Therefore its annihilator as an $S$-module is exactly $I_1S+I_2S$, i.e.,
	\(
	\Ann_S(E_1\otimes_k E_2)=I_1S+I_2S.
	\)
	Using the displayed isomorphism above, this gives the desired equality.
\end{proof}

It is well known that in general
\(
\Ann_R(M_1 \otimes_R M_2)\neq \Ann_R(M_1)+\Ann_R(M_2).
\)

\begin{question}\label{qat}
	Under what conditions does
	\[
	\Ann_R(M_1 \otimes_R M_2)=\Ann_R(M_1)+\Ann_R(M_2)\quad(\ast)
	\]
	hold?
\end{question}

The equality holds, for instance, when both modules are cyclic. Indeed, if
$M_1 = R/I$ and $M_2 = R/J$, then
\(
M_1 \otimes_R M_2 \cong R/(I+J),
\)
and hence
$(\ast)$ is valid.

\begin{discussion}\label{dist}
	Suppose $\pd_R(M)$ and $\pd_R(N)$ are finite and $M,N$ are Tor-independent, i.e.
	$\Tor^R_i(M,N)=0$ for all $i>0$. Then
	\[
	M\otimes_R N \text{ is faithful}
	\;\Longleftrightarrow\;
	M \text{ and } N \text{ are faithful}\quad(\dagger)
	\]
	This observation provides an alternative proof of Proposition~\ref{pt}, avoiding the use of \cite{berg}, and clarifies the behavior of faithfulness under tensor products.
\end{discussion}

\begin{proof}
	Let $P_\bullet$ and $Q_\bullet$ be finite projective resolutions of $M$ and $N$, respectively. Since $\Tor^R_i(M,N)=0$ for all $i>0$, the total complex of $P_\bullet\otimes_R Q_\bullet$ is a projective resolution of $M\otimes_R N$. 
	By multiplicativity of Euler characteristics for tensor products of Tor-independent modules of finite projective dimension, we obtain
	\(
	\chi(M\otimes_R N)=\chi(M)\chi(N).
	\)
	Hence
	\[
	\chi(M\otimes_R N)>0
	\;\Longleftrightarrow\;
	\chi(M)>0 \text{ and } \chi(N)>0.
	\]
	By \cite[Proposition 2.26]{wol}, for such modules over a generically Gorenstein ring,
	positivity of the Euler characteristic is equivalent to faithfulness. Therefore,
	$(\dagger)$ satisfied.
\end{proof}

\section{Connections to trace}

By $\tr(-)$ we mean the trace ideal of a module.

\begin{proposition}
	Let $R$ be a $0$-dimensional Gorenstein local ring. Then for any finitely generated $R$-modules $M$ and $N$,
	\[
	\tr(M \otimes_R N) \subseteq \tr(M) \cap \tr(N).
	\]
	If equality holds, then Question~\ref{qat} has an affirmative answer.
\end{proposition}

\begin{proof}
	We use the following elementary facts for $R$-modules and ideals:
	\begin{enumerate}
		\item $\Ann(M \otimes_R N) \supseteq \Ann(M) + \Ann(N)$,
		\item if $X \subseteq Y$, then $\Ann(X) \supseteq \Ann(Y)$,
		\item for ideals $I,J \subseteq R$, $\Ann(I+J) = \Ann(I) \cap \Ann(J)$.
	\end{enumerate}
	
	Recall that for any finitely generated module $X$, one always has
	\(
	\tr(X) \subseteq \Ann\Ann(X).
	\)
	Hence
	\[
	\begin{aligned}
	\tr(M \otimes_R N)
	&\subseteq \Ann\Ann(M \otimes_R N) \\
	&\subseteq \Ann\!\big(\Ann(M) + \Ann(N)\big) \\
	&= \Ann\Ann(M) \cap \Ann\Ann(N).
	\end{aligned}
	\]
	Since $R$ is $0$-dimensional and Gorenstein, every ideal $I$ satisfies $\Ann(\Ann(I))=I$ (see \cite[Ex.~3.2.15]{BH}). Moreover, in this situation $\tr(X)=\Ann\Ann(X)$ for every finitely generated module $X$ (see \cite{lindo2}). Therefore
	\[
	\Ann\Ann(M)=\tr(M)
	\quad\text{and}\quad
	\Ann\Ann(N)=\tr(N),
	\]
	and the desired inclusion follows:
	\(
	\tr(M \otimes_R N) \subseteq \tr(M) \cap \tr(N).
	\)
	Now assume equality holds:
$\tr(M \otimes_R N)=\tr(M)\cap\tr(N).
$
	Then all the inclusions above must be equalities, in particular
	\[
	\Ann\Ann(M \otimes_R N)=\Ann\!\big(\Ann(M)+\Ann(N)\big).
	\]
	Applying $\Ann(-)$ to both sides and again using the double annihilator property for ideals in a $0$-dimensional Gorenstein ring, we obtain
	\[
	\Ann(M \otimes_R N)
	=\Ann\Ann\Ann(M \otimes_R N)
	=\Ann\Ann\!\big(\Ann(M)+\Ann(N)\big)
	=\Ann(M)+\Ann(N).
	\]
	This is precisely the statement required in Question~\ref{qat}.
\end{proof}
Let $\overline{R}$ denote the integral closure of $R$ in its total ring of fractions.
By $\mathfrak{C}_R= \Ann_R(\overline{R}/R)$ we mean the conductor of $\overline{R}$ into $R$.

\begin{proposition}\label{cl}
	Let $I$ be a trace ideal containing a regular element. Then
	\(
	\Ann\!\big(\Ext^1_R(R/I, R)\big) \supseteq \mathfrak{C}_R.
	\)
	In particular, if $\mathfrak{C}_R=\mathfrak{m}$ and $\Ht(I)=1$, then
	\(
	D(I)=\mathfrak{m}.
	\)
\end{proposition}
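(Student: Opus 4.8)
The plan is to first make $\Ext^1(R/I,R)$ explicit and then use the trace-ideal hypothesis to pin down where its annihilator lives. Since $I$ contains a regular element, $\Hom_R(R/I,R)=(0:_RI)=0$, so applying $\Hom_R(-,R)$ to $0\to I\to R\to R/I\to 0$ produces the short exact sequence $0\to R\to \Hom_R(I,R)\to \Ext^1_R(R/I,R)\to 0$. Identifying $\Hom_R(I,R)$ with the fractional ideal $I^{-1}:=(R:_QI)$ inside the total quotient ring $Q$, this gives $\Ext^1_R(R/I,R)\cong I^{-1}/R$, and therefore $\Ann(\Ext^1_R(R/I,R))=(R:_RI^{-1})=\{x\in R: xI^{-1}\subseteq R\}$. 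So the task reduces to showing $\mathfrak{C}_RI^{-1}\subseteq R$.

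The key step is to locate $I^{-1}$ inside $\overline{R}$. Under the identification $\Hom_R(I,R)\cong I^{-1}$, the trace $\tr(I)$ corresponds to the product $I\cdot I^{-1}$, so the hypothesis $I=\tr(I)$ is equivalent to $I I^{-1}=I$. This forces $I^{-1}\subseteq (I:_QI)=\operatorname{End}_R(I)$, while the reverse containment $\operatorname{End}_R(I)\subseteq (R:_QI)=I^{-1}$ is automatic from $I\subseteq R$; hence $I^{-1}=\operatorname{End}_R(I)$. Because $I$ is a finitely generated faithful $R$-module (it contains a regular element), every element of $\operatorname{End}_R(I)=(I:_QI)$ is integral over $R$ by the determinant trick, so $I^{-1}=\operatorname{End}_R(I)\subseteq \overline{R}$.

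The containment now follows formally. By definition of the conductor $\mathfrak{C}_R\overline{R}\subseteq R$, so $\mathfrak{C}_RI^{-1}\subseteq \mathfrak{C}_R\overline{R}\subseteq R$, which is exactly $\mathfrak{C}_R\subseteq (R:_RI^{-1})=\Ann(\Ext^1(R/I,R))$.

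For the particular assertion, note that $I$ containing a regular element forces $\grade(I,R)\geq 1$, and $\Ht(I)=1$ gives $\grade(I,R)\leq \Ht(I)=1$, so $g=1$ and $D(I)=\Ann(\Ext^1(R/I,R))$. The inclusion just proved yields $D(I)\supseteq \mathfrak{C}_R=\fm$, while Fact \ref{k} gives $D(I)\subseteq \rad(I)\subseteq \fm$ since $I$ is proper; combining the two forces $D(I)=\fm$. The main obstacle is the middle step, namely extracting $I^{-1}=\operatorname{End}_R(I)\subseteq \overline{R}$ from the trace-ideal property, as the remainder is bookkeeping on fractional ideals and conductors.
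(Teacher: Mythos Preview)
Your proof is correct and follows essentially the same route as the paper's argument: both compute $\Ext^1_R(R/I,R)\cong I^\ast/R$ from the short exact sequence, use the trace-ideal hypothesis to identify $I^\ast$ with $\Hom_R(I,I)$, and then invoke $\Hom_R(I,I)\subseteq \overline{R}$ to conclude that the conductor annihilates the quotient. The paper simply quotes the equality $I^\ast=\Hom_R(I,I)$ as a known characterization of trace ideals, whereas you unpack it via the fractional-ideal identity $II^{-1}=I$ and the determinant trick; and for the particular assertion the paper argues $\Ext^1(R/I,R)\neq 0$ directly from $\grade(I)=1$, while you use Fact~\ref{k} to bound $D(I)\subseteq\rad(I)\subseteq\fm$---but these are minor variations in presentation, not substance.
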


\begin{proof}
	Consider the short exact sequence
	\(
	0 \longrightarrow I \longrightarrow R \longrightarrow R/I \longrightarrow 0.
	\)
	Since $I$ contains a regular element, every homomorphism $R/I \to I$ is zero; hence
	\(
	\Hom_R(R/I,I)=0.
	\)
	Applying $\Hom_R(-,R)$ gives an exact sequence
	\[
	0 \longrightarrow R \longrightarrow I^\ast \longrightarrow \Ext^1_R(R/I,R) \longrightarrow 0,
	\]
	where $I^\ast=\Hom_R(I,R)$.
Because $I$ is a trace ideal, we have $I^\ast=\Hom_R(I,I)$. Moreover, for any ideal $I$ containing a regular element,
	\(
	\Hom_R(I,I)\subseteq \overline{R}
	\)
	inside the total ring of fractions. Therefore
	\[
	\Ext^1_R(R/I,R)\cong I^\ast/R
	= \frac{\Hom_R(I,I)}{R}
	\subseteq \frac{\overline{R}}{R}.
	\]
	By definition of the conductor,
	\(
	\mathfrak{C}_R = \Ann_R(\overline{R}/R),
	\)
	so $\mathfrak{C}_R$ annihilates every submodule of $\overline{R}/R$. Hence
	\(
	\Ann_R\!\big(\Ext^1_R(R/I,R)\big) \supseteq \mathfrak{C}_R.
	\)
For the final statement, assume $\mathfrak{C}_R=\mathfrak{m}$ and $\Ht(I)=1$. Since $I$ contains a regular element, $\grade(I,R)=1$, and therefore
	$\Ext^1_R(R/I,R)\neq 0$. We have
	\[
	\mathfrak{m}=\mathfrak{C}_R \subseteq \Ann_R\!\big(\Ext^1_R(R/I,R)\big) \subseteq R.
	\]
	Because $\Ext^1_R(R/I,R)\neq 0$ and $R$ is local, its annihilator must be a proper ideal; hence it equals $\mathfrak{m}$. By definition,
	\(
	D(I)=\Ann_R\!\big(\Ext^1_R(R/I,R)\big)=\mathfrak{m}.
	\)
\end{proof}

\begin{observation}
	Let $R$ be a ring and suppose $I$ contains a regular element.
	\begin{enumerate}
		\item[(i)] $\tr(I)\subseteq \Ann_R(\Ext^i_R(R/I,R))$ for all $i>1$.
		\item[(ii)] There are examples where $\tr(I)\nsubseteq \Ann_R(\Ext^1_R(R/I,R))$.
	\end{enumerate}
\end{observation}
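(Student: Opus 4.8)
I need to prove two things about an ideal $I$ containing a regular element: (i) the trace ideal $\tr(I)$ annihilates $\Ext^i(R/I,R)$ for all $i>1$, and (ii) this can fail at $i=1$.

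For part (i), the plan is to recall the definition of the trace ideal and exploit the regular element. Since $I$ contains a regular element, the dual $I^\ast = \Hom(I,R)$ is naturally a submodule of the total quotient ring, and $\tr(I) = I\cdot I^\ast$ (the image of the evaluation map $I\otimes I^\ast\to R$). The key structural fact I would use is that for a trace ideal, multiplication by elements of $\tr(I)$ factors through $I$ in a way that kills higher Ext. Concretely, I would take the short exact sequence $0\to I\to R\to R/I\to 0$ and compare the long exact $\Ext$ sequences. For $i\geq 2$ one has $\Ext^i(R/I,R)\cong \Ext^{i-1}(I,R)$ since $\Ext^{i}(R,R)=0$ for $i\geq1$ and $\Ext^{i-1}(R,R)=0$ for $i\geq 2$. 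The plan is then to show that each element of $\tr(I)$ induces a map on $I$ (via the trace factorization $I\xrightarrow{\text{eval}} R$) that becomes zero on these shifted Ext modules; more precisely, an element $c\in\tr(I)=\sum_j \varphi_j(I)$ with $\varphi_j\in I^\ast$ gives, for each $j$, a factorization of multiplication through the inclusion $I\hookrightarrow R$, so on $\Ext^{i-1}(I,R)$ the action of $c$ factors through $\Ext^{i-1}(R,R)=0$ for $i-1\geq 1$, i.e. $i\geq 2$.

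For part (ii), the plan is simply to produce one explicit example. The natural candidate is a non-normal one-dimensional ring where $\tr(I)$ is strictly large but $D(I)=\Ann(\Ext^1(R/I,R))$ is constrained. I would revisit the numerical-semigroup ring of Example \ref{ex1}, namely $R=k[x^5,x^6,x^{14}]$, or a similar cusp, choosing $I$ to be a trace ideal (for instance the conductor-related ideal or $\fm$ itself) and computing $\Ext^1(R/I,R)\cong I^{-1}/R$ directly as in that example. In such a ring one can arrange $\tr(I)=R$ (when $I$ is a trace ideal with $I^\ast=\Hom(I,I)$ large) or otherwise larger than $\Ann(I^{-1}/R)$, exhibiting the strict failure $\tr(I)\nsubseteq\Ann(\Ext^1(R/I,R))$.

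The main obstacle is the bookkeeping in part (i): making the factorization argument uniform over a generating set of $\tr(I)$ and correctly tracking the dimension shift in the long exact sequence so that the annihilation genuinely requires $i>1$ and not merely $i\geq 1$. The boundary case $i=1$ is exactly where $\Ext^1(R/I,R)\cong I^\ast/R$ survives, so the argument must visibly break there — which is precisely what part (ii) confirms. I expect the regular-element hypothesis to be essential throughout, both for identifying $I^\ast$ inside the total quotient ring and for the vanishing $\Hom(R/I,I)=0$ that initializes the long exact sequence.
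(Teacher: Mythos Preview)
Your plan for part (i) is essentially the paper's argument. You correctly use the shift $\Ext^i(R/I,R)\cong\Ext^{i-1}(I,R)$ for $i\geq 2$ and then annihilate via a factorization through a free module; the paper simply cites \cite[Theorem 3.3]{dey} for the statement $\tr(I)\,\Ext^{+}(I,R)=0$, which is exactly what your factorization proves. One small wording issue: the relevant factorization of ``multiplication by $\varphi(a)$'' on $I$ is $I\xrightarrow{\varphi}R\xrightarrow{a\cdot}I$, not through the inclusion $I\hookrightarrow R$; but the conclusion (factoring through $\Ext^{i-1}(R,R)=0$) is unaffected.

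Your plan for part (ii), however, does not work as written. If you choose $I$ to be a trace ideal (the conductor, or $\fm$), then $\tr(I)=I$, and since $I$ always lies in $\Ann(\Ext^{g}(R/I,R))$ for $g=\grade(I)$ (Fact \ref{k}), you get $\tr(I)=I\subseteq\Ann(\Ext^1(R/I,R))$ automatically---so no counterexample arises. Your parenthetical ``one can arrange $\tr(I)=R$ when $I$ is a trace ideal'' is self-contradictory: a proper trace ideal has $\tr(I)=I\neq R$. (If you had instead taken $I=(a)$ principal with $a$ regular and not a unit, then indeed $\tr(I)=R\nsubseteq(a)=\Ann(\Ext^1(R/(a),R))$; this trivial example already suffices, but it is not what you described.)

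The paper's argument for (ii) is quite different and cleaner: it invokes \cite{lindo} for the existence of a \emph{reflexive} ideal $I$ in a one-dimensional ring whose trace $\tr(I)$ is \emph{not} reflexive, and then argues by contradiction. If $\tr(I)\subseteq\Ann(\Ext^1(R/I,R))$, then since $\Ann(\Ext^1(R/I,R))=\Ann(I^\ast/R)=(I^\ast)^{-1}=I^{**}=I$ (using reflexivity of $I$), one gets $I\subseteq\tr(I)\subseteq I$, forcing $\tr(I)=I$ to be reflexive---contradiction. The point is that the example must have $\tr(I)\supsetneq I$ while $D(I)=I$; reflexivity of $I$ together with non-reflexivity of $\tr(I)$ guarantees both.
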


\begin{proof}
	(i) For $i>1$, from the short exact sequence
	$0\to I\to R\to R/I\to 0$ we obtain
	\(
	\Ext^i_R(R/I,R)\cong \Ext^{i-1}_R(I,R).
	\)
	Since $I$ contains a regular element, we may view $I\subseteq R\subseteq Q(R)$, where $Q(R)$ is the total ring of fractions. By \cite[Theorem 3.3]{dey},
	\[
	\tr(I)\cdot \Ext^j_R(I,R)=0 \quad \text{for all } j>0.
	\]
	Taking $j=i-1>0$ yields
	\(
	\tr(I)\cdot \Ext^i_R(R/I,R)=0,
	\)
	so $\tr(I)\subseteq \Ann_R(\Ext^i_R(R/I,R))$.
	
	(ii) By \cite{lindo}, there exists a one-dimensional ring $R$ with a reflexive ideal $I$ such that $\tr(I)$ is not reflexive. Suppose, toward a contradiction, that
	\(
	\tr(I)\subseteq \Ann_R(\Ext^1_R(R/I,R)).
	\)
	As above,
	\(
	\Ext^1_R(R/I,R)\cong I^\ast/R,
	\)
	so
	\[
	\Ann_R(\Ext^1_R(R/I,R))=\Ann_R(I^\ast/R)=(I^\ast)^{-1}.
	\]
	Hence
	\(
	I \subseteq \tr(I) \subseteq (I^\ast)^{-1}=I^{**}.
	\)
	Since $I$ is reflexive, $I^{**}=I$, so $\tr(I)=I$, which is reflexive, a contradiction. 
\end{proof}

By $\ell(I)$ we mean the analytically spread of $I$.

\begin{proposition}\label{5.4}
	Let $(R,\mathfrak m)$ be an analytically unramified local ring and let $I$ be an ideal such that
	\[
	\ell(I_\fp) < \dim(R_\fp)\quad \text{for all } \fp \in \Spec(R)\setminus \min(I).
	\]
	Then
	\(
	\tr(I)\subseteq \overline{I}.
	\)
\end{proposition}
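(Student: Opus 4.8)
The plan is to combine the valuative description of integral closure that is available in analytically unramified rings with McAdam's theory of asymptotic prime divisors, thereby reducing the desired containment to a purely local statement at the minimal primes of $I$. First I would dispose of the degenerate case and assume that $I$ contains a regular element, so that $\tr(I)=I\cdot I^{-1}$ with $I^{-1}=(R:_{Q(R)}I)$ and $1\in I^{-1}$ (hence $I\subseteq\tr(I)\subseteq R$). Since $R$ is analytically unramified, Rees's theorem guarantees that $\overline{R}$ is module-finite over $R$ and that the integral closure of $I$ is detected by finitely many divisorial Rees valuations $v_1,\dots,v_s$, nonnegative on $\overline{R}$, via the rule $x\in\overline{I}\iff v_j(x)\ge v_j(I)$ for all $j$, where $v_j(I)=\min_{a\in I}v_j(a)$. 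Because each $v_j$ is a valuation, $v_j(\tr(I))=v_j(I)+v_j(I^{-1})$, so the entire statement collapses to the assertion that $v_j(I^{-1})\ge 0$, i.e. $I^{-1}\subseteq V_j$, where $V_j$ denotes the valuation ring of $v_j$, for each $j$.

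The next step is to locate the centers of these valuations. Writing $\fp_j=\fm_{V_j}\cap R$ for the center of $v_j$ on $R$, I would use that the center of a Rees valuation of $I$ is an asymptotic prime divisor of $I$; McAdam's theorem (applied after localizing at $\fp_j$, where the relevant local ring is again analytically unramified) then yields $\ell(I_{\fp_j})=\dim R_{\fp_j}$. The contrapositive of the hypothesis $\ell(I_{\fp})<\dim R_{\fp}$ for $\fp\notin\min(I)$ now forces $\fp_j\in\min(I)$. Thus every Rees valuation of $I$ is centered at a minimal prime of $I$, and after localizing at such a $\fp_j$ the ideal $I_{\fp_j}$ is primary to the maximal ideal while $v_j$ factors through $R_{\fp_j}$. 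This is the point where the analytic-spread hypothesis is actually consumed: it is precisely what prevents Rees valuations from appearing at the embedded or ``spread-maximal'' primes that would enlarge the trace beyond $\overline{I}$.

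The hard part will be the final local step, namely establishing $I^{-1}\subseteq V_j$ once $v_j$ is centered at a minimal prime. The naive estimate $\phi I\subseteq R\subseteq V_j$ only delivers $v_j(\phi)\ge -v_j(I)$, which is not enough, so one must exploit the non-normal structure of $R_{\fp_j}$ through its conductor $\mathfrak{C}=(R_{\fp_j}:\overline{R_{\fp_j}})$. The intended mechanism is that for $\phi\in I^{-1}$ the product $\phi\mathfrak{C}$ is an $\overline{R_{\fp_j}}$-submodule of $R_{\fp_j}$, hence contained in $\mathfrak{C}$ itself, whence the determinant trick applied to the finite faithful module $\mathfrak{C}$ shows $\phi$ is integral over $R_{\fp_j}$ and so $\phi\in\overline{R_{\fp_j}}\subseteq V_j$. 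I expect this conductor/normalization argument to be the genuine obstacle, since it is delicate exactly when $I_{\fp_j}$ sits below the conductor, and one must check that the hypotheses of McAdam's theorem survive in the possibly non-equidimensional setting; once $v_j(I^{-1})\ge 0$ is secured at every $j$, the valuative criterion assembles the global containment $\tr(I)\subseteq\overline{I}$ and the compatibility of $\tr(-)$ and $\overline{(-)}$ with localization makes the reassembly routine.
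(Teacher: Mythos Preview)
Your route is genuinely different from the paper's. The paper argues by induction on $\dim R$: the one-dimensional base case is imported wholesale from a result of Lindo--Maitra--Zhang asserting $\overline{\tr(I)}=\overline I$ in that setting, and for $d\ge 2$ one checks $\overline I_\fp=\overline{\tr(I)}_\fp$ at every $\fp\in\Ass(R/\overline I)$, using the inductive hypothesis when $\fp\neq\fm$ (together with an $(S_2)$-type reduction $\tr(I)_\fp=I_\fp$ when $\grade(I_\fp)\ge 2$) and Burch's theorem to exclude $\fp=\fm$ via the analytic-spread condition. You instead work globally with Rees valuations, using McAdam to force the centers into $\min(I)$ and then attempting to show $I^{-1}\subseteq V_j$ directly.

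The reduction to $I^{-1}\subseteq V_j$ is a correct reformulation, and the use of McAdam to locate the centers is fine. The gap is the conductor step. From $\phi\in I^{-1}$ you only know $\phi I_{\fp_j}\subseteq R_{\fp_j}$; to conclude $\phi\mathfrak C\subseteq R_{\fp_j}$ you would need $\mathfrak C\subseteq I_{\fp_j}$, which is not part of the hypothesis, and without it there is no reason for $\phi\mathfrak C$ to land in $R_{\fp_j}$ (let alone in $\mathfrak C$). Worse, the target inequality $v_j(I^{-1})\ge 0$ is simply false under the sole assumption that $v_j$ is centered on a minimal prime of $I$: in $R=k[[t^2,t^3]]$ with $I=(t^4,t^5)$ the unique Rees valuation ring is $V=k[[t]]$, centered at $\fm\in\min(I)$, yet $t^{-2}\in I^{-1}\setminus V$; here $\tr(I)=\fm\not\subseteq\overline I=I$. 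So whatever argument closes this local step must consume the analytic-spread hypothesis a second time, not merely at the center-location stage; your conductor mechanism does not do so. The paper bypasses the difficulty by pushing the entire local burden into its one-dimensional base case rather than ever asserting anything of the form $I^{-1}\subseteq V_j$.
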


\begin{proof}
	We proceed by induction on $d:=\dim R$.
	
	\medskip
	\noindent\textbf{Base case: $d=1$.}
	In dimension one, it is known that
	\(
	\overline{\tr(I)}=\overline{I}
	\)
	(see \cite[2.4(3)]{lindo}). Hence $\tr(I)\subseteq \overline{I}$.
	
	\medskip
	\noindent\textbf{Inductive step: $d\ge 2$.}
	Assume the statement holds for all analytically unramified local rings of dimension $<d$.
	
	Analytically unramifiedness localizes (see \cite[Prop.~9.1.4]{HS}), so $R_\fq$ is analytically unramified for every prime $\fq$. Since $I\subseteq \tr(I)$, we have
	\(
	\overline{I}\subseteq \overline{\tr(I)}.
	\)
	To prove the reverse inclusion, it suffices (by Fact~\ref{FQ}) to show
	\[
	\overline{IR_\fp}=\overline{\tr(I)R_\fp}
	\quad\text{for every }\fp\in \Ass(\overline{I}).
	\]
	
	\medskip
	\noindent\textbf{(a) $\fp\in \min(I)$.}
	
	Localizing at such a prime, $IR_\fp$ is $\fp R_\fp$-primary.  
If $\Ht(\fp)>1$, then $\dim R_\fp\ge 2$ and $R_\fp$ satisfies $(S_2)$. In this case
	\[
	\grade(IR_\fp,R_\fp)=\grade(\fp R_\fp,R_\fp)\ge 2.
	\]
	For ideals of grade at least two in an $S_2$ ring, the trace is unchanged after localization, so
	\(
	\tr(I)_\fp=\tr(IR_\fp)=IR_\fp,
	\)
	and hence their integral closures agree.
If $\Ht(\fp)=1$, then $\dim R_\fp=1$, and by the one-dimensional case,
	\(
	\overline{IR_\fp}=\overline{\tr(IR_\fp)}.
	\)
	
	\medskip
	\noindent\textbf{(b) $\fp\in \Ass(\overline{I})\setminus \min(I)$.}
	
	If $\fp\ne \fm$, then $\dim R_\fp<d$, and by the inductive hypothesis applied to $R_\fp$ and $IR_\fp$, we get
	\(
	\overline{IR_\fp}=\overline{\tr(IR_\fp)}.
	\)
	Here we use that both trace and integral closure commute with localization.
It remains to rule out $\fp=\fm$. Suppose $\fm\in \Ass(\overline{I})$. Then by a theorem of Burch (see \cite[Prop.~5.4.7]{HS}),
	\(
	\ell(I)=\dim R.
	\)
	But this contradicts the hypothesis, since $\fm\notin \min(I)$ and we assumed
	\[
	\ell(I_\fm)=\ell(I)<\dim(R_\fm)=\dim R.
	\]
	Hence $\fm\notin \Ass(\overline{I})$.
	
	\medskip
	We have shown that $\overline{IR_\fp}=\overline{\tr(IR_\fp)}$ for all $\fp\in \Ass(\overline{I})$. By Fact~\ref{FQ}, this implies
	\(
	\overline{I}=\overline{\tr(I)}.
	\)
	Therefore $\tr(I)\subseteq \overline{I}$.
\end{proof}

\section{Low dimension and small height}

By $\mu(I)$ we mean the minimal number of elements that needs to generate an ideal $I$.

\begin{observation} \label{2e}Suppose $R$ is a domain and that $\mu(I) = 2$.
	
	\begin{enumerate}
		\item[(i)]  
		\(\ann 
		(\Ext^2(R/I,R)) = (I:I^{**}).
		\)
		
		\item[(ii)]  $(I:_R\overline{I})  \subseteq\Ann(\Ext^2(R/I,R))$ if 
	$R$ is analytically unramified and	$\dim (R) = 1$.
	\end{enumerate}
\end{observation}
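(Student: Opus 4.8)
The plan is to establish part (i) as a computation of $\Ext^2(R/I,R)$ and then deduce part (ii) almost formally from it. Throughout write $I=(a,b)$ with $a,b$ nonzero (legitimate since $\mu(I)=2$ forces $I\neq 0$, and in a domain every minimal generator of a nonzero ideal is nonzero), and set $Q(R):=\operatorname{Frac}(R)$. For part (i) I would first reduce $\Ext^2$ to $\Ext^1$: the sequence $0\to I\to R\to R/I\to 0$ together with $\Ext^{\geq 1}(R,R)=0$ gives the standard shift $\Ext^2(R/I,R)\cong\Ext^1(I,R)$. Next present $I$ by $0\to K\to R^2\xrightarrow{\pi}I\to 0$, where $\pi(s,t)=sa+tb$ and $K=\Syz(a,b)$.

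The computational heart is to identify $K$ with the inverse fractional ideal $I^{-1}:=(R:_{Q(R)}I)$. Since the Koszul relation $(b,-a)$ spans $K\otimes_RQ(R)$, the module $K$ has rank one, and the assignment $(s,t)\mapsto s/b=-t/a$ identifies $K$ with $\{\lambda\in Q(R):\lambda a\in R\text{ and }\lambda b\in R\}=I^{-1}$. Dualizing $0\to K\to R^2\to I\to 0$ then yields the exact sequence $0\to I^{*}\to (R^2)^{*}\xrightarrow{\iota^{*}}K^{*}\to\Ext^1(I,R)\to 0$, so that $\Ext^1(I,R)\cong\coker(\iota^{*})=K^{*}/\im(\iota^{*})$. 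Under $K\cong I^{-1}$ we have $K^{*}\cong(I^{-1})^{-1}=I^{**}$, and tracking $\iota^{*}$ shows that the two dual basis vectors of $(R^2)^{*}$ restrict to multiplication by $b$ and by $-a$; hence $\im(\iota^{*})=(a,b)=I$ sitting inside $I^{**}$. Combining, $\Ext^2(R/I,R)\cong I^{**}/I$, and taking annihilators gives $\Ann(\Ext^2(R/I,R))=\Ann(I^{**}/I)=(I:_RI^{**})$, which is precisely the claimed equality.

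For part (ii) I would feed (i) into the problem: replacing $\Ann(\Ext^2(R/I,R))$ by $(I:_RI^{**})$ reduces the assertion to the containment $(I:_R\overline{I})\subseteq(I:_RI^{**})$. Because colon ideals reverse inclusions in the second argument, this is implied by $I^{**}\subseteq\overline{I}$, which I would obtain as follows: the inclusion $I\subseteq\overline{I}$ and the order-preserving nature of the bidual $(-)^{**}$ (a double application of the contravariant $\Hom(-,R)$) give $I^{**}\subseteq(\overline{I})^{**}$; and since $\dim R=1$ and $R$ is analytically unramified, integrally closed ideals are reflexive (see \cite[2.14]{corso}), so $(\overline{I})^{**}=\overline{I}$. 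This yields $I^{**}\subseteq\overline{I}$ and hence the desired containment.

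The main obstacle is the identification in part (i), specifically the bookkeeping showing $K\cong I^{-1}$ and the determination of $\im(\iota^{*})=I$ after dualizing; this is where the hypothesis $\mu(I)=2$ is genuinely used, since the rank-one structure of $K$ and the single complementary Koszul syzygy are exactly what make $\Ext^2(R/I,R)\cong I^{**}/I$ and fail for ideals needing more generators. Part (ii) is then essentially formal, its only real content being the reflexivity of integrally closed ideals, which is precisely where $\dim R=1$ and analytic unramifiedness enter; I note that this key step is an instance of the phenomenon recorded in Conjecture \ref{26}.
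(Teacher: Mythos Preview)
Your proof is correct and follows the same overall strategy as the paper: both establish the key identification $\Ext^2(R/I,R)\cong I^{**}/I$ and then deduce (ii) from $I^{**}\subseteq\overline{I}$ via \cite[2.14]{corso}. The only difference is stylistic: for part (i) the paper obtains $\Ext^1(I,R)\cong I^{**}/I$ in one line by invoking the Auslander transpose (writing $\Ext^1(I,R)=\Ext^2(\Tr(I),R)=\coKer(I\to I^{**})$ from the Auslander--Bridger sequence, using that $I$ is torsionless), whereas you unwind this by hand, explicitly identifying the syzygy module $K$ with $I^{-1}$ and tracking the dual map. Your route is more elementary and self-contained; the paper's is shorter but relies on the transpose formalism. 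For part (ii) the two arguments are identical.
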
 

\begin{proof} (i): There is an  exact sequence  
	$0\rightarrow I^* \rightarrow R^2 \rightarrow I \rightarrow 0$.
	Recall that $\Tr(-)$ is the Auslander's transpose  of $(-)$. We have
	\[
	\begin{array}{ll}
	\Ext^2(R/I,R) &= \Ext^1(I,R) \\
	&= \Ext^2(\Tr(I),R) \\
	&= \coker(I\to I^{**})\\
	&=I^{**}/I,
	\end{array}
	\] 
	and so \(\ann (
	\Ext^2(R/I,R)) = \ann (I^{**}/I)=  (I:I^{**}).
	\)

(ii)	 By \cite[2.14]{corso} we know $\overline{I} \supseteq I^{**}$. This gives
	$$
	r \in (I:\overline{I}) \Rightarrow rI^{**}\subseteq r\overline{I}\subseteq I  \Rightarrow r\in (I:I^{**})
	\stackrel{(i)}=\Ann(\Ext^2(R/I,R)) ,
	$$so $(I:_R\overline{I})  \subseteq\Ann(\Ext^2(R/I,R))$ as claimed.
\end{proof}
By  $\Syz_{i}(M)$ we mean the $i^{th}$  syzygy module of $M$. The following is easy:
\begin{fact}\label{ep}
	Suppose $p:=\pd(M)<	\infty$. Then $\Ext^p(M,R)=\Tr(\Syz_{p-1}(M))$.
\end{fact}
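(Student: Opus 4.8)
The plan is to reduce the identity to a matching of cokernels coming from a single minimal free resolution of $M$. First I would fix a minimal free resolution
\[
0 \to F_p \xrightarrow{\phi_{p-1}} F_{p-1} \xrightarrow{\phi_{p-2}} \cdots \to F_1 \xrightarrow{\phi_0} F_0 \to M \to 0,
\]
which exists and terminates exactly at step $p$ because $\pd(M)=p<\infty$; in particular the leftmost map $\phi_{p-1}$ is injective. Truncating at the $(p-1)$-st spot exhibits $\Syz_{p-1}(M)=\im(\phi_{p-2})=\Ker(\phi_{p-2})$ together with the surjection $\phi_{p-2}\colon F_{p-1}\twoheadrightarrow \Syz_{p-1}(M)$, whose kernel is $\im(\phi_{p-1})$. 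Since $\phi_{p-1}$ is injective, this identifies
\[
F_p \xrightarrow{\phi_{p-1}} F_{p-1} \to \Syz_{p-1}(M)\to 0
\]
as a minimal free presentation of $\Syz_{p-1}(M)$.

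Next I would compute both sides directly from this data. By the very definition of the Auslander transpose, applying $\Hom_R(-,R)=(-)^\ast$ to the presentation above yields the exact sequence
\[
F_{p-1}^\ast \xrightarrow{\phi_{p-1}^\ast} F_p^\ast \to \Tr(\Syz_{p-1}(M)) \to 0,
\]
so that $\Tr(\Syz_{p-1}(M)) = \coker(\phi_{p-1}^\ast)$. On the other hand, dualizing the whole resolution produces the cochain complex
\[
0 \to F_0^\ast \to \cdots \to F_{p-1}^\ast \xrightarrow{\phi_{p-1}^\ast} F_p^\ast \to 0,
\]
whose cohomology at the final term $F_p^\ast$ computes $\Ext^p(M,R)$; because $F_p^\ast$ sits in the last spot, this cohomology is precisely $\coker(\phi_{p-1}^\ast)$.

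Comparing the two computations gives $\Ext^p(M,R) = \coker(\phi_{p-1}^\ast) = \Tr(\Syz_{p-1}(M))$, as claimed. For $p=1$ this recovers the identity $\Ext^1(M,R)=\Tr(M)$ noted earlier, since $\Syz_0(M)=M$. The only point requiring care—and the one I would state explicitly—is that the transpose should be taken from a \emph{minimal} presentation, so that working throughout with the minimal free resolution makes the two cokernels literally equal rather than merely equal up to free summands; the hypothesis $\pd(M)=p$ is exactly what guarantees that $\phi_{p-1}$ is injective, which in turn is what lets the truncated sequence serve as the correct presentation of $\Syz_{p-1}(M)$.
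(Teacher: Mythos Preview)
The paper states this fact without proof (it is introduced simply as ``The following is easy''), so there is no paper argument to compare against; your approach---identifying both sides with $\coker(\phi_{p-1}^{\ast})$ from the dualized minimal resolution---is exactly the natural one and is correct. One small slip: you write $\Syz_{p-1}(M)=\im(\phi_{p-2})=\Ker(\phi_{p-2})$, but the last term should read $\Ker(\phi_{p-3})$; since the rest of your argument only uses $\im(\phi_{p-2})$, nothing is affected.
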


For simplicity, we set $\Omega M:=\Syz_{1}(M)$.

\begin{proposition} 
	If $M$ is perfect module of projective dimension two, then
	\[
	\Ann(\Ext^2(M,R)) = \Ann(M)=\Ann \frac{(\Omega M)^{**}}{\Omega M} = \{r \in R:r(\Omega M)^{**}\subseteq M\}.
	\]
\end{proposition}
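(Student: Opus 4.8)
The plan is to funnel all three equalities through the single duality for perfect modules, which is available precisely because $M$ is perfect of grade $\grade(M)=\pd(M)=2$. To set this up I would fix a minimal free resolution $0\to R^{\beta_2}\to R^{\beta_1}\to R^{\beta_0}\to M\to 0$ and dualize it. Perfectness forces $M^\ast=\Ext^1(M,R)=0$, so the dual complex $0\to R^{\beta_0}\to R^{\beta_1}\to R^{\beta_2}\to \Ext^2(M,R)\to 0$ is a length-two free resolution; this exhibits $\Ext^2(M,R)$ as perfect of grade two and, dualizing once more, recovers $M$, i.e. $\Ext^2(\Ext^2(M,R),R)\cong M$. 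The first equality $\Ann(\Ext^2(M,R))=\Ann(M)$ then follows formally: since multiplication by $r\in R$ is functorial, $\Ann(N)\subseteq\Ann(\Ext^2(N,R))$ for every $N$, and applying this to $N=M$ and then to $N=\Ext^2(M,R)$ gives $\Ann(M)\subseteq\Ann(\Ext^2(M,R))\subseteq\Ann(\Ext^2(\Ext^2(M,R),R))=\Ann(M)$.

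For the middle equality I would first invoke Fact \ref{ep} with $p=2$ to identify $\Ext^2(M,R)=\Tr(\Omega M)$, where $\Omega M=\Syz_1(M)$. As a first syzygy, $\Omega M$ is a submodule of a free module, hence torsionless, so the canonical biduality map $\Omega M\to(\Omega M)^{\ast\ast}$ is injective and $\Ext^1(\Tr(\Omega M),R)=0$. Consequently the Auslander--Bridger exact sequence
\[
0\to \Ext^1(\Tr(\Omega M),R)\to \Omega M\to (\Omega M)^{\ast\ast}\to \Ext^2(\Tr(\Omega M),R)\to 0
\]
collapses to a short exact sequence, yielding $(\Omega M)^{\ast\ast}/\Omega M\cong \Ext^2(\Tr(\Omega M),R)=\Ext^2(\Ext^2(M,R),R)\cong M$, the last isomorphism being the duality established above. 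Hence $\Ann\bigl((\Omega M)^{\ast\ast}/\Omega M\bigr)=\Ann(M)$.

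The final equality is purely definitional: writing $\Omega M\hookrightarrow(\Omega M)^{\ast\ast}$ for the biduality embedding, the annihilator of the quotient is $\{r\in R:\ r(\Omega M)^{\ast\ast}\subseteq\Omega M\}$. (I read the ``$M$'' on the right of the displayed statement as $\Omega M$, the only reading that type-checks, consistent with the formula $\Ann(\Ext^2(R/I,R))=(I:I^{\ast\ast})$ of Observation \ref{2e}.)

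I expect the main obstacle to be bookkeeping rather than conceptual. One must ensure that the identification $\Ext^2(M,R)=\Tr(\Omega M)$ is an honest isomorphism of modules, so that substituting it into $\Ext^2(\Tr(\Omega M),R)$ legitimately produces $\Ext^2(\Ext^2(M,R),R)$, and that $\Omega M$ has no stray free summand that would obstruct $\Tr(\Tr(\Omega M))\cong\Omega M$. Both issues dissolve if one carries out every computation against one fixed minimal free resolution of $M$ and its dual, which simultaneously realizes $\Omega M$, its transpose, $\Ext^2(M,R)$, and both biduality maps inside a single compatible diagram.
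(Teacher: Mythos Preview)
Your proof is correct and follows essentially the same route as the paper: both establish the first equality via the perfect-module duality $\Ext^2(\Ext^2(M,R),R)\cong M$, then invoke Fact~\ref{ep} to identify $\Ext^2(M,R)\cong\Tr(\Omega M)$ and use torsionlessness of $\Omega M$ together with the Auslander--Bridger sequence to realize $(\Omega M)^{**}/\Omega M$ as $\Ext^2(\Tr(\Omega M),R)\cong M$. Your write-up is simply more explicit---spelling out the annihilator argument via functoriality and flagging the apparent typo $M$ for $\Omega M$ in the last set---but the underlying strategy is identical.
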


\begin{proof}
	The first equality follows from
	\(\Ext^2(\Ext^2(M,R),R) \cong M
	\) and the latter is similar to Observation \ref{2e}(i). In fact, in view of Fact \ref{ep}, we have
	\[
	\begin{array}{ll}
	\Ext^2(\Ext^2(M,R),R) &\cong \Ext^2(\Tr(\Omega M),R) \\
	&= \coker((\Omega M)\to (\Omega M)^{**}),
	\end{array}
	\]	
	and recall that $\Omega M$ is a submodule of a free module, so torsionless. This means   $\Omega M \to (\Omega M)^{**} $ is an embedding, so the desired  claim is clear.	
\end{proof}
By $(-)^{\vee}$ we mean the  Matlis duality.
\begin{proposition}\label{1d}
	Let $(R,\mathfrak m)$ be a Gorenstein local ring of dimension $d>1$, and let $M$ be a finitely generated $R$-module with $\dim M = 1$. Then
	\[
	\Ann_R\!\big(\Ext^{d-1}_R(M,R)\big)
	= \Ann_R\!(\frac{M}{\Gamma_{\mathfrak m}(M)}).
	\]
	In particular,
	\(
	\Ann_R\!\big(H^{1}_{\mathfrak m}(M))
	= \Ann_R\!(\frac{M}{\Gamma_{\mathfrak m}(M)}).
	\)
\end{proposition}

\begin{proof}
	Consider the short exact sequence
	\[
	0 \longrightarrow \Gamma_{\mathfrak m}(M)
	\longrightarrow M
	\longrightarrow M/\Gamma_{\mathfrak m}(M)
	\longrightarrow 0 .
	\]
	Applying $\Ext^{\bullet}_R(-,R)$ yields the exact sequence
	\[
	0 = \Ext^{d-2}_R(\Gamma_{\mathfrak m}(M),R)
	\longrightarrow \Ext^{d-1}_R(M/\Gamma_{\mathfrak m}(M),R)
	\longrightarrow \Ext^{d-1}_R(M,R)
	\longrightarrow \Ext^{d-1}_R(\Gamma_{\mathfrak m}(M),R) = 0 .
	\]
	Hence
	\[
	\Ext^{d-1}_R(M,R)
	\cong \Ext^{d-1}_R(M/\Gamma_{\mathfrak m}(M),R).
	\]
	
	We may assume $\Gamma_{\mathfrak m}(M)\neq 0$. Then $M/\Gamma_{\mathfrak m}(M)$ has positive depth. Since $\dim M=1$, we get
	\[
	0 < \depth\!\left(M/\Gamma_{\mathfrak m}(M)\right)
	\le \dim\!\left(M/\Gamma_{\mathfrak m}(M)\right)
	= 1,
	\]
	so $\depth(M/\Gamma_{\mathfrak m}(M))=1$.
	By the Auslander–Bridger formula,
	\[
	\sup\{\, i \mid \Ext^i_R(M/\Gamma_{\mathfrak m}(M),R)\neq 0 \,\}
	= \Gdim(M/\Gamma_{\mathfrak m}(M))
	= d - \depth(M/\Gamma_{\mathfrak m}(M))
	= d-1.
	\]
	Moreover,
	\begin{align*}
	\inf\{\, i \mid \Ext^i_R(M/\Gamma_{\mathfrak m}(M),R)\neq 0 \,\}
	&= \grade\!\big(\Ann_R(M/\Gamma_{\mathfrak m}(M)), R\big) \\
	&= \operatorname{ht}\!\big(\Ann_R(M/\Gamma_{\mathfrak m}(M))\big) \\
	&= d - \dim(M/\Gamma_{\mathfrak m}(M)) \\
	&= d-1 .
	\end{align*}
	Thus $M/\Gamma_{\mathfrak m}(M)$ is quasi-perfect. By \cite[4.11]{supp},
	\[
	\Ann_R\!\big(\Ext^{d-1}_R(M/\Gamma_{\mathfrak m}(M),R)\big)
	= \Ann_R\!\left(M/\Gamma_{\mathfrak m}(M)\right).
	\]
	Using the isomorphism above gives the first equality.
	For the second statement, Matlis duality gives
	\[
	\Ann_R(X) = \Ann_R(X^\vee)
	\quad \text{for any Artinian module } X.
	\]
	By local duality,
	\(
	H^{1}_{\mathfrak m}(M)^\vee \cong \Ext^{d-1}_R(M,R),
	\)
	and the result follows.
\end{proof}

\begin{observation}
	Let $M$ be a zero-dimensional $R$-module of finite projective dimension $p$. Then
	\[
	\Ann_R\!\big(\Ext^{p}_R(M,R)\big) = \Ann_R(M).
	\]
\end{observation}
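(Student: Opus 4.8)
The plan is to show that $M$ is a \emph{perfect} module and then invoke the standard biduality for perfect modules, exactly in the spirit of the proposition on perfect modules of projective dimension two treated earlier. I may assume $M \neq 0$, since otherwise both sides equal $R$.

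First I would verify that $M$ is perfect. Since $M$ is zero-dimensional and nonzero, $\Supp(M) = \{\fm\}$, so $\Ann(M)$ is $\fm$-primary and $\Ass(M) = \{\fm\}$; in particular every element of $\fm$ is a zerodivisor on $M$, whence $\depth(M) = 0$. The Auslander--Buchsbaum formula then gives $p = \pd(M) = \depth(R) - \depth(M) = \depth(R)$. On the other hand, $\grade(M) = \grade(\Ann(M), R) = \grade(\fm, R) = \depth(R)$, the middle equality holding because $\sqrt{\Ann(M)} = \fm$ and grade depends only on the radical. Comparing the two computations yields $\grade(M) = \pd(M) = p$, so $M$ is perfect.

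Next, the perfection of $M$ supplies the duality $\Ext^p(\Ext^p(M,R), R) \cong M$, so that $N := \Ext^p(M,R)$ is again perfect of grade $p$ with $\Ext^p(N,R) \cong M$. Using functoriality of $\Ext^p(-,R)$ (if $r$ kills a module $L$, then multiplication by $r$ on $\Ext^p(L,R)$ is zero), I obtain $\Ann(M) \subseteq \Ann(\Ext^p(M,R)) = \Ann(N)$ and, applying the same inclusion to $N$, $\Ann(N) \subseteq \Ann(\Ext^p(N,R)) = \Ann(M)$. Chaining these gives $\Ann(\Ext^p(M,R)) = \Ann(N) = \Ann(M)$, as desired.

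The only genuine content is the verification that $M$ is perfect, and the single point to be careful about is that $R$ need not be Cohen--Macaulay: what rescues the argument is that $\grade(M)$ is computed against $\fm$ (so it equals $\depth(R)$) rather than against $\dim(R)$, which then matches $\pd(M) = \depth(R)$. Once perfection is in hand, the biduality $\Ext^p(\Ext^p(M,R),R) \cong M$ and the elementary annihilator containment do all the work, so no serious obstacle remains.
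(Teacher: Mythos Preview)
Your proof is correct and follows essentially the same route as the paper: both arguments compute $\grade(M)=\grade(\fm,R)=\depth(R)$ and $\pd(M)=\depth(R)$ via Auslander--Buchsbaum to conclude that $M$ is perfect, and then invoke the standard annihilator equality for perfect modules (your biduality argument spells out what the paper leaves implicit). One small inaccuracy in your commentary: the hypotheses actually \emph{force} $R$ to be Cohen--Macaulay (a finite-length module of finite projective dimension makes $R$ Cohen--Macaulay by the intersection theorem), and the paper states this explicitly; your proof is nonetheless fine since it never relies on $\depth(R)=\dim(R)$.
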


\begin{proof}
	The existence of such a module implies that $R$ is Cohen–Macaulay. Let $d=\dim R$. Since $M$ is Artinian,
	\[
	\inf\{\, i \mid \Ext^i_R(M,R)\neq 0 \,\}
	= \grade(\Ann_R(M),R)
	= \depth R
	= d.
	\]
	Also,
	\[
	p = \sup\{\, i \mid \Ext^i_R(M,R)\neq 0 \,\},
	\]
	and by the Auslander–Buchsbaum formula, $p\le d$. Hence $p=d$, so $M$ is perfect. Therefore,
	\(
	\Ann_R\!\big(\Ext^{p}_R(M,R)\big) = \Ann_R(M).
	\)
\end{proof}

Let $\Spec^1(R)$ be the class of all prime ideal $\fp$ with $\Ht(\mathfrak p)=1$.

\begin{corollary}
	Let $P \in \Spec^1(R)$ with $\pd_R(P) < \infty$, and assume $\Char(R)=p>0$. Then for every $q=p^e$,
	\[
	P^{[q]} = P^q = P^{(q)} = (P^q)^\star = \overline{P^q}.
	\]
	In particular,
	\(
	D(P^q) = P^{[q]}.
	\)
\end{corollary}

\begin{proof}
	By \cite[Theorem 1.1(b)]{HH}, we have
	\(
	P^{(q)} \subseteq (P^q)^\star.
	\)
	Thus,
	\[
	P^{[q]} \subseteq P^q \subseteq P^{(q)} \subseteq (P^q)^\star \subseteq \overline{P^q}. \tag{$\ast$}
	\]
	Note that $P^{[q]}$ is $P$-primary. Localizing at $P$, all inclusions in $(\ast)$ become equalities. Hence, by Fact~\ref{FQ}, all containments in $(\ast)$ are equalities.
\footnote{Alternatively, it was conjectured in \cite{ufd} that $P$ is principal, in which case the result is immediate. In fact, one can show $P=\overline{(x)}$ for some $x$, so the conjecture holds over normal rings.}
	From the proof of Example~\ref{ex1}, we have
	\(
	\Ext^1_R\!(\frac{R}{P^q}, R) = (P^q)^\ast / R.
	\)
	Hence
	\[
	\Ann_R\!(\Ext^1_R\!\left(\frac{R}{P^q}, R)\right)
	= (P^q)^{\ast\ast}
	= P^{(q)},
	\]
	which yields the desired conclusion.
\end{proof}

\begin{proposition}\label{6.8}
	Suppose $P \in \Spec(R)$ has analytic spread one. Then 
	\(
	D(P^n)=\overline{P^n}=(P^n)^{**}
	\) for all $n\ge 1$.
\end{proposition}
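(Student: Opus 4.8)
The plan is to separate the two asserted equalities. The equality $D(P^n)=(P^n)^{**}$ is essentially formal once the grade is pinned down, whereas $(P^n)^{**}=\overline{P^n}$ is where the analytic-spread-one hypothesis does the work, through a principal reduction. First I would fix the numerics: since analytic spread does not increase under localization, $1=\ell(P)\ge \ell(PR_P)=\dim R_P=\Ht(P)$, forcing $\Ht(P)=1$ (the degenerate case $P=0$ aside), and $P$ contains a nonzerodivisor, so $\grade(P)=\grade(P^n)=1$ for every $n$. For any grade-one ideal $J$ one has $\Hom(R/J,R)=(0:J)=0$, so $0\to R\to J^{*}\to \Ext^1(R/J,R)\to 0$ identifies $\Ext^1(R/J,R)\cong J^{*}/R$ with $J^{*}=(R:_{Q(R)}J)$; hence $D(J)=\Ann(J^{*}/R)=(R:_R J^{*})=J^{**}$, which is the identity already invoked in the grade-one part of Fact \ref{ddd}. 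Taking $J=P^n$ settles the right-hand equality $D(P^n)=(P^n)^{**}$.

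Next I would produce a principal reduction. As $D(-)$, integral closure, reflexive closure and analytic spread are all unchanged under the faithfully flat extension $R\to R(t)$ (for $D$ this is the flat base-change observation), I may assume the residue field infinite. Then $\ell(P)=1$ yields a nonzerodivisor $x\in P$ with $\overline{(x)}=\overline P$, equivalently $P\overline R=x\overline R$ in $Q(R)$. Two consequences matter: first, $\overline{P^n}=\overline{(x^n)}=x^n\overline R\cap R$ for every $n$; second, $\Min((x))=\Min(\overline{(x)})=\Min(\overline P)=\{P\}$, so $x$ is a unit in $R_\fp$ for every height-one prime $\fp\neq P$.

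Finally I would match the two closures in codimension one. Writing $M:=x^{-n}P^n$, one has $R\subseteq M\subseteq\overline R$ with $M\overline R=\overline R$, and the same colon manipulation gives $(P^n)^{**}=x^nM^{**}$ with $M^{**}=(R:_{Q(R)}(R:_{Q(R)}M))$; thus $(P^n)^{**}=\overline{P^n}$ is equivalent to $M^{**}=\overline R\cap x^{-n}R$. Away from $P$ both sides localize to $R_\fp$ since $x$ is a unit there, so they already agree in codimension one except possibly at $P$. To globalize I would show $\overline{P^n}$ is $P$-primary: by Burch's criterion together with $\ell(P_\fp)\le\ell(P)=1<\Ht(\fp)$ for $\fp\supsetneq P$, no prime of height $\ge 2$ is associated to $\overline{P^n}$; and $(P^n)^{**}$, being trapped between $P^n$ and $\overline{P^n}$, is $P$-primary as well. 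Fact \ref{FQ} then reduces the whole equality to the single localization at $P$, where (double dual and integral closure both commuting with localization) it becomes the one-dimensional identity $(\fm_P^{\,n})^{**}=\overline{\fm_P^{\,n}}$ over $R_P$.

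The hard part will be exactly this one-dimensional local identity in the possibly non-normal ring $R_P$. The inclusion $(\fm_P^{\,n})^{**}\subseteq\overline{\fm_P^{\,n}}$ is \cite[2.14]{corso}, and $\overline{\fm_P^{\,n}}$ is reflexive because integrally closed ideals are reflexive in dimension one; the genuine obstacle is the reverse inclusion $\overline{\fm_P^{\,n}}\subseteq(\fm_P^{\,n})^{**}$, i.e. that the integral closure is already the reflexive hull. This is a Conjecture \ref{26}-type statement made accessible precisely by the principal reduction: one computes $M^{**}$ against the conductor $\mathfrak{C}_{R_P}$ using $\fm_P^{\,n}\overline{R_P}=x^n\overline{R_P}$, and the normalization $\overline{R_P}$ being a finite product of discrete valuation rings is what finally pins the double dual down to $x^n\overline{R_P}\cap R_P$. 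When $R$ is normal the difficulty evaporates: then $\overline{P^n}\subseteq(P^n)^{**}$ holds by the earlier normal-ring inclusion, while $(P^n)^{**}=P^{(n)}\subseteq\overline{P^n}$ follows from $P$-primariness and the trivial containment $P^nR_P\subseteq\overline{P^nR_P}$.
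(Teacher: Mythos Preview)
Your computation $D(P^n)=(P^n)^{**}$ via $\Ext^1(R/J,R)\cong J^{*}/R$ and $\Ann(J^{*}/R)=(R:_R J^{*})=J^{**}$ is exactly what the paper does. For the equality $\overline{P^n}=(P^n)^{**}$, however, the paper is much more direct: it simply cites McAdam's theorem to obtain $\overline{A}^{*}(P):=\bigcup_n\Ass(R/\overline{P^n})=\{P\}$ straight from $\ell(P)=1$, and then invokes Fact~\ref{FQ}. There is no base change to infinite residue field, no principal reduction $x$, and no explicit passage to the one-dimensional local ring $R_P$. Your route through Burch's criterion recovers the same $P$-primariness of $\overline{P^n}$ (Burch and McAdam are two faces of the same result here), so the principal-reduction machinery is extra scaffolding that the paper's argument does not need.

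There is also a genuine circularity in your globalization step: you claim $(P^n)^{**}$ is $P$-primary because it is ``trapped between $P^n$ and $\overline{P^n}$'', but the containment $(P^n)^{**}\subseteq\overline{P^n}$ is precisely half of what you are trying to prove. The paper avoids this by only needing $\overline{P^n}$ to be $P$-primary before applying Fact~\ref{FQ}. That said, you are right that the substance of the equality lives in the one-dimensional local identity $(\fm_P^{\,n})^{**}=\overline{\fm_P^{\,n}}$ (equivalently, in supplying one global inclusion together with equality at $P$ for Fact~\ref{FQ}); the paper's invocation of Fact~\ref{FQ} is terse on exactly this point and does not spell it out, whereas you at least isolate it and sketch a conductor argument. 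So your proposal is more honest about where the difficulty sits, but the paper's line through McAdam is the cleaner and intended one, and your presentation needs the circular step repaired.
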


\begin{proof}
	Since $\ell(P)\ge \Ht(P)$ and $\ell(P)=1$, it follows that $\Ht(P)=1$.
	By a theorem of McAdam \cite{Mcadam},
	\[
	\overline{A}^\ast(P):=\bigcup_{n\ge1}\Ass\!\big(R/\overline{P^n}\big)=\{P\}.
	\]
	Hence $\overline{P^n}$ is $P$-primary for every $n$. Using Fact~\ref{FQ} (double dual agrees with the ideal at all associated primes), we obtain
	\(
	\overline{P^n}=(P^n)^{**}.
	\)
Next we compute $D(P^n)$. Since $\Ht(P)=1$, the ideal $P^n$ contains a non-zerodivisor, so $\grade(P^n,R)=1$. From the short exact sequence
	\[
	0 \longrightarrow P^n \longrightarrow R \longrightarrow R/P^n \longrightarrow 0
	\]
	we get
	\[
	\Ext^1_R(R/P^n,R)\cong \Hom_R(P^n,R)/R=(P^n)^{-1}/R,
	\]
	where $(P^n)^{-1}=\{\,x\in Q(R)\mid xP^n\subseteq R\,\}$.
	Therefore
	\[
	D(P^n)=\Ann_R\!\big(\Ext^1_R(R/P^n,R)\big)
	=\Ann_R\!\big((P^n)^{-1}/R\big)
	=\{\,r\in R \mid r(P^n)^{-1}\subseteq R\,\}.
	\]
	But
	\[
	\{\,r\in R \mid r(P^n)^{-1}\subseteq R\,\}
	=\{\,r\in R \mid r(P^n)^{-1}\subseteq P^n\,\}
	=(P^n)^{**}.
	\]
	Thus $D(P^n)=(P^n)^{**}=\overline{P^n}$, as claimed.
\end{proof}

\begin{corollary}\label{AP}Let $(R, \mathfrak{m},k)$ be a $d$-dimensional quasi-normal ring with $|k| = \infty$.
	Let $P \in \Spec^1(R)$ be such that $\mu(P) \leq d-1$ and $\mu(PR_Q) = 1$ for all $ Q \neq \mathfrak{m}$. Then for any $n$ we have
\(
(P^n)^{**} \subseteq\overline{P^n}	\). In particular,  
	\(
D( P^n ) \subseteq\overline {P^n
	}.\) 
\end{corollary}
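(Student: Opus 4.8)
The plan is to reduce the whole statement to the single assertion that all the relevant closures of $P^n$ coincide, namely
\(
D(P^n) = (P^n)^{**} = P^{(n)} = \overline{P^n}
\)
for every $n$, after which the asserted inclusions follow a fortiori. The guiding intuition, exactly as in Proposition \ref{6.8}, is that $P$ should behave like an analytic-spread-one prime: it is locally principal off $\mathfrak{m}$, and its fibre cone is too small to detect the closed point.

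First I would fix the local structure at $P$. Since $\mu(PR_P)=1$ and $R$ is quasi-normal, $R_P$ is a one-dimensional Gorenstein local ring whose maximal ideal is principal, hence a DVR; consequently $\overline{P^n}R_P = P^nR_P$, and therefore $\overline{P^n} \subseteq P^nR_P \cap R = P^{(n)}$. Next I would compute the reflexive closure using Fact \ref{fos}: writing $(P^n)^{**} = \bigcap_{Q\in\Spec^1(R)} P^nR_Q \cap R$ and noting that $P^nR_Q = R_Q$ for every height-one prime $Q\neq P$ (two distinct height-one primes cannot be comparable), only the term $Q=P$ survives, so $(P^n)^{**}=P^nR_P\cap R = P^{(n)}$. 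Finally, because $R$ satisfies $(S_2)$ and $\Ht(P)=1$, the ideal $P^n$ has grade one, so $D(P^n)=\Ann(\Ext^1(R/P^n,R)) = (P^n)^{**}$ by the grade-one identity $D(I)=I^{**}$ recorded in the proof of Fact \ref{ddd}. At this point all three invariants equal $P^{(n)}$, and it remains only to prove $P^{(n)}\subseteq\overline{P^n}$, i.e. that $\overline{P^n}$ is $P$-primary.

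For the primary-ness I would invoke McAdam's description of the asymptotic prime divisors of integral closures, \cite{Mcadam}, precisely as in Proposition \ref{6.8}: a prime $\mathfrak{p}\supseteq P$ lies in $\overline{A}^{*}(P)=\bigcup_n\Ass(R/\overline{P^n})$ exactly when $\ell(P_\mathfrak{p})=\Ht(\mathfrak{p})$, and I would rule out every candidate other than $P$. For $\mathfrak{p}=P$ one has $\ell(P_P)=1=\Ht(P)$, so $P$ does occur. For a prime $\mathfrak{p}$ with $P\subsetneq\mathfrak{p}\subsetneq\mathfrak{m}$ the hypothesis $\mu(PR_\mathfrak{p})=1$ forces $P_\mathfrak{p}$ to be principal on a nonzerodivisor, so $\ell(P_\mathfrak{p})=1<\Ht(\mathfrak{p})$ and $\mathfrak{p}\notin\overline{A}^{*}(P)$. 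The decisive case is $\mathfrak{p}=\mathfrak{m}$: the fibre cone $\bigoplus_n P^n/\mathfrak{m}P^n$ is generated in degree one by the $\mu(P)$-dimensional space $P/\mathfrak{m}P$, whence $\ell(P)\le\mu(P)\le d-1<d=\dim R=\Ht(\mathfrak{m})$, so $\mathfrak{m}\notin\overline{A}^{*}(P)$. Thus $\overline{A}^{*}(P)=\{P\}$, each $\overline{P^n}$ is $P$-primary, and $\overline{P^n}=\overline{P^n}R_P\cap R = P^nR_P\cap R = P^{(n)}$, closing the loop. Combined with the previous paragraph this gives $D(P^n)=(P^n)^{**}=\overline{P^n}$, and in particular both stated inclusions.

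The step I expect to be the genuine obstacle is the application of McAdam's theorem, since the clean equivalence $\mathfrak{p}\in\overline{A}^{*}(P)\Leftrightarrow \ell(P_\mathfrak{p})=\Ht(\mathfrak{p})$ requires $R$ to be formally equidimensional (quasi-unmixed); I would need to verify that the quasi-normal hypothesis, perhaps together with the paper's standing assumptions, supplies this, or else restrict to that setting. The remaining inputs—the bound $\ell(P)\le\mu(P)$, the DVR computation at $P$, and the collapse of the reflexive closure to the symbolic power via Fact \ref{fos}—are routine once the local picture is in place. The hypothesis $|k|=\infty$ plays only its customary role of guaranteeing well-behaved minimal reductions underlying the analytic-spread estimates.
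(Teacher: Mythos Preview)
Your proposal is correct and follows essentially the same route as the paper: both invoke McAdam's result to conclude $\overline{A}^{*}(P)=\{P\}$, then use the resulting $P$-primariness of $\overline{P^n}$ together with the identification $(P^n)^{**}=P^{(n)}$ to finish. Your write-up is considerably more detailed than the paper's two-line proof---you explicitly verify the analytic-spread inequalities case by case and justify the DVR structure at $P$---and you even obtain the stronger conclusion $D(P^n)=(P^n)^{**}=\overline{P^n}$ rather than mere containment; your flagged concern about the formally equidimensional hypothesis behind McAdam's criterion is legitimate and is not addressed in the paper either.
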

\begin{proof}
	By \cite[Page 189]{Mcadam}, we know
	\(
\overline{A}^*(P) =   \{P\}
	\).  Then we use Fact \ref{FQ} to deduce that  
	\(
	P^{(n)} = (P^n)^{**} \subseteq\overline{P^n}	\).\end{proof}

\begin{example}The assumptions of {Corollary} \ref{AP} can be checked. 
Indeed, let 
	$R := k[X,Y,Z,W]/(XY-ZW)$ and $ P = (x,z)$. Then $\mu({P}) = 2$, but $\mu(PR_Q) = 1$  for all   $Q \neq \fm$.
\end{example}

It is well known that a ring $R$ is normal if and only if every principal ideal is integrally closed. The following extends this fact to height–one unmixed ideals.

\begin{corollary}\label{610}
	Let $R$ satisfy Serre’s condition $(R_1)$ and let $I$ be an unmixed ideal of height one. Then $I$ is integrally closed.
\end{corollary}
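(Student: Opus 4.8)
The plan is to prove $\overline{I}\subseteq I$ (the reverse inclusion being automatic) by reducing everything to a local computation at the associated primes of $R/I$ and then invoking \cref{FQ}. Concretely, I would set $J:=\overline{I}$, so that $I\subseteq J$, and aim to verify the hypothesis of \cref{FQ}, namely that $I_\fp=J_\fp$ for every $\fp\in\Ass(R/I)$. Once this is checked, \cref{FQ} delivers $I=J=\overline{I}$ at once.

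First I would exploit unmixedness. Since $I$ has height one and is unmixed, every $\fp\in\Ass(R/I)$ satisfies $\Ht(\fp)=\Ht(I)=1$; in particular there are no associated primes of height zero. By the $(R_1)$ hypothesis, each such $R_\fp$ is then a regular local ring of dimension one, hence a DVR, and in particular a normal domain. This is exactly where the two standing assumptions are used together: unmixedness confines $\Ass(R/I)$ to height one, and $(R_1)$ guarantees regularity precisely in that codimension, so no global domain or normality assumption on $R$ is needed.

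Next I would use that integral closure of ideals commutes with localization, so that $(\overline{I})_\fp=\overline{I_\fp}$ (see \cite{HS}). Because every ideal of a DVR is integrally closed---the DVR being a PID whose nonzero ideals are powers of the maximal ideal, each equal to its own integral closure---we obtain $\overline{I_\fp}=I_\fp$. Combining these gives $J_\fp=(\overline{I})_\fp=\overline{I_\fp}=I_\fp$ for every $\fp\in\Ass(R/I)$, and \cref{FQ} then yields $I=\overline{I}$, i.e.\ $I$ is integrally closed. The only step requiring any care is the localization behavior of integral closure paired with the observation that unmixedness rules out associated primes outside codimension one, so that $(R_1)$ genuinely suffices; the DVR computation itself is routine, and no heavier machinery (such as \cref{6.8} or \cref{AP}) is required.
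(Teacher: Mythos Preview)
Your proposal is correct and follows essentially the same approach as the paper: both arguments take $I\subseteq\overline{I}$, verify equality locally at each $\fp\in\Ass(R/I)$ using that unmixedness forces $\Ht(\fp)=1$ and $(R_1)$ then makes $R_\fp$ a DVR, and conclude via \cref{FQ}. Your version simply fills in the details the paper leaves implicit, in particular the localization identity $(\overline{I})_\fp=\overline{I_\fp}$ and the fact that ideals in a DVR are integrally closed.
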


\begin{proof}
	Clearly $I \subseteq \overline{I}$. We show equality locally at each $\fp \in \Ass(R/I)$. Since $I$ is unmixed of height one, every such $\fp$ has height one. Because $R$ satisfies $(R_1)$, each localization $R_\fp$ is a one–dimensional regular local ring, hence a discrete valuation ring and therefore normal. In a DVR every ideal is principal, hence integrally closed. Thus
	\(
	IR_\fp = \overline{I}\,R_\fp
\text{ for all } \fp \in \Ass(R/I).
	\)
	Applying Fact~\ref{FQ} (an ideal equals another if they agree after localization at all associated primes), we conclude that $I=\overline{I}$.
\end{proof}

\medskip

Now suppose $R$ is one–dimensional and formally unmixed, and let $I\subseteq J \subseteq R$. Recall that
\[
e(I)=e(J) \iff J \subseteq \overline{I},
\qquad
e_{HK}(I)=e_{HK}(J) \iff J \subseteq I^\star.
\]
In dimension one we have $e_{HK}(-)=e(-)$, so these criteria imply $\overline{I}=I^\star$. We next give a direct argument showing that both coincide with the Frobenius closure.

\begin{proposition}\label{612}
	Let $R$ be a one–dimensional complete domain with algebraically closed residue field of prime characteristic $p>0$. Then for every ideal $I$,
	\(
	\overline{I}=I^\star=I^F.
	\)
\end{proposition}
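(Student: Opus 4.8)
The plan is to establish the chain of containments $I^F \subseteq I^\star \subseteq \overline{I} \subseteq I^F$, from which all three closures coincide. The first two inclusions are standard and require no hypotheses beyond $\Char(R)=p$: Frobenius closure is always contained in tight closure, and tight closure is always contained in integral closure. Thus the entire content of the statement lies in the reverse inclusion $\overline{I}\subseteq I^F$, which I would prove by an explicit valuation-theoretic argument exploiting the additive Frobenius.

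For the setup, note that since $R$ is a complete local domain of dimension one, its integral closure $\overline{R}=:V$ in the fraction field is module-finite, hence a complete DVR; write $v$ for its valuation and $t$ for a uniformizer. Because $k$ is algebraically closed and $V/R$ is finite, the residue field of $V$ is again $k$, so the units of $R$ surject onto $V/\mathfrak{m}_V = k$. Module-finiteness also yields a nonzero conductor $\mathfrak{C}_R$, which is a $V$-ideal of the form $t^c V \subseteq R$ for some $c\geq 0$; in particular every element of $V$ of valuation at least $c$ already lies in $R$. Assuming $0\neq I\neq R$, I set $n:=\ord(I)=\min\{v(b): 0\neq b\in I\}$ and fix $a\in I$ with $v(a)=n$.

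For the main argument, take $x\in\overline{I}$. Any integral equation for $x$ over $I$ is also one over $IV = t^n V$, and since $V$ is a valuation ring one has $\overline{IV}=IV$; hence $v(x)\geq n$. I then split on the valuation of $u:=x/a\in V$. If $v(x)>n$ then $v(u)\geq 1$, so for any $p$-power $q\geq c$ we get $v(u^q)\geq q\geq c$, whence $u^q\in\mathfrak{C}_R\subseteq R$ and therefore $x^q=u^q a^q\in I^{[q]}$. If instead $v(x)=n$, I first cancel the leading term: choosing a unit $\mu\in R$ whose residue in $k$ equals the ratio of the leading coefficients of $x$ and $a$, the element $x':=x-\mu a\in R$ satisfies $v(x')>n$. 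Then $x'$ falls into the first case, so $(x')^q\in I^{[q]}$ for a suitable $p$-power $q\geq c$; enlarging $q$ if needed and using the identity $(y+z)^q=y^q+z^q$ valid because $\Char(R)=p$, I obtain $x^q=(\mu a)^q+(x')^q=\mu^q a^q+(x')^q\in I^{[q]}$. In either case $x\in I^F$, completing the reverse inclusion.

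The routine part is the case $v(x)>n$, where dividing by $a$ forces a high power of $x$ into the conductor. The genuine obstacle is the boundary case $v(x)=n$: here $x/a$ is a unit, so no power of it enters $\mathfrak{C}_R$, and one must strip off the leading term by an element of $I$ before invoking the conductor. This is exactly where both hypotheses are needed — algebraic closedness of $k$ guarantees the leading coefficients lie in $k$ and can be matched by a unit of $R$, while $\Char(R)=p$ makes the decomposition $x=\mu a+x'$ survive the $q$-th power through the additive Frobenius. I would also record explicitly the background facts being used (module-finiteness of the integral closure of a complete local domain and the resulting nonzero conductor) and dispose of the degenerate cases $I=0$, $I=R$, and $x'=0$, where the claim is immediate.
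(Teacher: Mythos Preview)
Your argument is correct but follows a different path from the paper's. The paper's proof hinges on a single structural observation: for a sufficiently large $p$-power $q$ one has $\overline{R}\subseteq R^{1/q}$ (this uses completeness and perfectness of the residue field). Granting that, $\overline{I}=\overline{I\overline{R}}\cap R=I\overline{R}\cap R\subseteq IR^{1/q}\cap R$, so any $x\in\overline{I}$ can be written as $\sum a_i x_i$ with $a_i\in R^{1/q}$ and $x_i\in I$; raising to the $q$-th power and using the Frobenius identity gives $x^q=\sum a_i^q x_i^q\in I^{[q]}$ in one stroke, with no case distinction. Your route is more hands-on: you work directly with the DVR valuation and the conductor, and your case split on $v(x)>n$ versus $v(x)=n$ (with the unit-correction trick in the boundary case) is precisely what the paper's argument absorbs into the single containment $\overline{R}\subseteq R^{1/q}$. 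Both approaches ultimately rest on the same ingredients---module-finite normalization, nonzero conductor, and a coefficient field---but the paper packages them into one inclusion while you unpack them explicitly; your version has the modest advantage of making each hypothesis visible at the exact step where it is needed.
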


\begin{proof}
	Let $q=p^e$ for $e\gg0$. Since $R$ is complete with algebraically closed residue field, the integral closure $\overline{R}$ is module–finite over $R$ and contained in $R^{1/q}$ for $q$ sufficiently large.
	By \cite[Proposition 1.6.1]{HS},
	\(
	\overline{I}=\overline{I\overline{R}} \cap R.
	\)
	Because $\overline{R}$ is a one–dimensional normal domain, it is a Dedekind domain; being local, it is a discrete valuation ring. Hence every ideal of $\overline{R}$ is principal and integrally closed. In particular, $I\overline{R}$ is integrally closed, so
	\[
	\overline{I}=\overline{I\overline{R}} \cap R
	= I\overline{R} \cap R
	\subseteq IR^{1/q} \cap R.
	\]
	
	Take $x \in \overline{I}$. Then $x \in IR^{1/q}$, so we can write
	\[
	x = a_1 x_1 + \cdots + a_t x_t,
	\quad a_i \in R^{1/q},\ x_i \in I.
	\]
	Raising to the $q$th power gives
	\[
	x^q = a_1^q x_1^q + \cdots + a_t^q x_t^q \in I^{[q]},
	\]
	since $a_i^q \in R$. Thus $x \in I^F$, and so $\overline{I} \subseteq I^F$.
The containments
	\(
	I^F \subseteq I^\star \subseteq \overline{I}
	\)
	are standard in one–dimensional rings. Therefore
	\(
	\overline{I}=I^F=I^\star,
	\)
	as required.
\end{proof}

\section{Connections to triviality of vector bundles}
In this section, all rings are assumed to be homomorphic images of a Gorenstein ring (for instance, any complete local ring satisfies this). Moreover, we assume one of the following standing hypotheses without further mention.

\begin{hypothesis}
	\begin{enumerate}
		\item[(i)] $R$ contains an infinite field $\bar{k}$ with $\operatorname{char}(\bar{k})=0$, or
		\item[(ii)] $R$ contains an infinite field $\bar{k}$ with $\operatorname{char}(\bar{k})=p>0$ such that $\bar{k}$ is separable over its prime field.
	\end{enumerate}
\end{hypothesis}

We begin with the following.

\begin{proposition}\label{dv}
	Let $R$ be a Cohen–Macaulay UFD of dimension $d:=\dim R>4$, and let $M$ be a nonfree reflexive $R$–module such that $M_\fp$ is free for all $\fp\in\Spec R$ with $\operatorname{ht}(\fp)\le 3$. If, in addition, $R$ satisfies Serre’s condition $(R_2)$, then
	\(
	\Ann\!\big(\Ext^{d-3}_R(M,R)\big)
	\)
	is a prime ideal of height two.
\end{proposition}

\begin{proof}
	By \cite[Theorem 2.17]{syz}, there exists a prime ideal $P\subset R$ of height two and an exact sequence
	\[
	0 \longrightarrow F \longrightarrow M \longrightarrow P \longrightarrow 0,
	\]
	where $F$ is free. Applying $\Hom_R(-,R)$ yields
	\[
	\Ext^{d-3}_R(M,R)\cong \Ext^{d-3}_R(P,R)\cong \Ext^{d-2}_R(R/P,R).
	\]
	Since $R$ is Cohen–Macaulay and a UFD, it is Gorenstein. Hence
	\(
	\Ext^{d-2}_R(R/P,R)\cong K_{R/P} 
	\). Because $P$ has height two and $R$ satisfies $(R_2)$, $R/P$ is Cohen–Macaulay of dimension $d-2$, and its canonical module has annihilator equal to $P$. Therefore
	\[
	\Ann\!\big(\Ext^{d-3}_R(M,R)\big)=\Ann(K_{R/P})=P,
	\]
	which is a height–two prime ideal.
\end{proof}

\begin{corollary}
	Let $R$ be a Cohen–Macaulay UFD that is strongly normal, with $\dim R=d>4$. If $M$ is a vector bundle with $\depth(M)>3$, then $M$ is free.
\end{corollary}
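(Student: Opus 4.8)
The plan is to reduce the statement to Proposition \ref{dv} via a local-duality vanishing argument, proceeding by contradiction. First I would record the structural facts about $R$. Since $R$ is a Cohen-Macaulay UFD, it is Gorenstein (a factorial Cohen-Macaulay local ring is Gorenstein, so $\omega_R \cong R$), exactly as used in the proof of Proposition \ref{dv}; and the strong normality hypothesis supplies Serre's condition $(R_2)$, which is the precise requirement needed to invoke that proposition. Next I would verify the remaining hypotheses of Proposition \ref{dv} for $M$. Because $M$ is a vector bundle, i.e.\ locally free on the punctured spectrum, $M_\fp$ is free for every $\fp \neq \fm$; since $\dim R = d > 4$, every prime of height at most $3$ is strictly contained in $\fm$, so $M_\fp$ is free for all $\fp$ with $\Ht(\fp) \le 3$. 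Moreover $\depth(M) > 3 \ge 2$ together with this local freeness and the normality of $R$ forces $M$ to satisfy $(S_2)$ and hence to be reflexive.

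Now suppose, for contradiction, that $M$ is not free. Then $M$ is a non-free reflexive module satisfying the local-freeness condition, so Proposition \ref{dv} applies and gives that $\Ann(\Ext^{d-3}_R(M,R))$ is a prime ideal of height $2$; in particular $\Ext^{d-3}_R(M,R) \neq 0$. On the other hand I would compute this $\Ext$ by local duality over the Gorenstein ring $R$: Matlis duality yields
\[
\Ext^{d-3}_R(M,R)^\vee \cong H^{3}_\fm(M),
\]
while the hypothesis $\depth(M) > 3$ forces $H^i_\fm(M) = 0$ for all $i \le 3$, in particular $H^3_\fm(M) = 0$. Since the Matlis dual is faithful, this gives $\Ext^{d-3}_R(M,R) = 0$, whose annihilator is all of $R$ rather than a height-$2$ prime --- a contradiction. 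Hence $M$ is free.

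The routine steps are the reflexivity of $M$ and the identification of its free locus; the crux is matching the single nonvanishing cohomological degree $d-3$ produced by Proposition \ref{dv} with the single degree $3$ killed by the depth hypothesis, which is exactly where local duality over the Gorenstein $R$ ties the two computations together. The main point requiring care is the reading of the hypotheses: I interpret \emph{vector bundle} as locally free on $\Spec(R) \setminus \{\fm\}$ and \emph{strongly normal} as guaranteeing $(R_2)$ (indeed more), and the argument genuinely needs both to be strong enough to feed Proposition \ref{dv}. If strong normality delivered only $(R_1)$ the proposition would be unavailable, so confirming that the stated assumption truly yields $(R_2)$ is the one place where the hypotheses must be used in full force.
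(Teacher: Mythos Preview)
Your proof is correct and follows essentially the same approach as the paper: establish reflexivity of $M$, assume $M$ is not free, invoke Proposition \ref{dv} to get $\Ext^{d-3}_R(M,R)\neq 0$, and derive a contradiction via local duality $H^3_\fm(M)\cong \Ext^{d-3}_R(M,R)^\vee$ together with $\depth(M)>3$. Your version is slightly more explicit in verifying the hypotheses of Proposition \ref{dv} (the $(R_2)$ condition from strong normality and the local freeness in height $\le 3$), but the logical skeleton is identical.
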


\begin{proof}
	By \cite[Proposition 1.4.1(b)]{BH}, $M$ is reflexive. Suppose $M$ is not free. Then Proposition~\ref{dv} gives
	\[
	E:=\Ext^{d-3}_R(M,R)\cong K_{R/P}\neq 0
	\]
	for some height–two prime $P$. By local duality,
	\(
	H^3_{\fm}(M)\cong E^{\vee}\neq 0,
	\)
	contradicting $\depth(M)>3$. Hence $M$ is free.
\end{proof}

\begin{discussion}[cf.\ \cite{syz}]
	If $V$ is a vector bundle on $\mathbb{P}^n$ of rank $k<n$ that is not a direct sum of line bundles, a theorem of Evans–Griffith shows that at least one of
	\(
\{	H^1(\mathbb{P}^n,V),\dots,H^{k-1}(\mathbb{P}^n,V)\}
	\)
	and similarly one of
	\(
\{	H^{n-1}(\mathbb{P}^n,V),\dots,H^{n-k+1}(\mathbb{P}^n,V)\}
	\)
	is nonzero.
\end{discussion}

\begin{corollary}
	Let $R$ be a Cohen–Macaulay UFD that is strongly normal with $\dim R=d>4$, and let $V=\widetilde{M}$ be a vector bundle on
	\(
	\Spec(R)^\circ:=\Spec(R)\setminus\{\fm\}.
	\)
	If
	\(
	H^2(\Spec(R)^\circ,V)=0,
	\)
	then $V$ is trivial.
\end{corollary}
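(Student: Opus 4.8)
The plan is to reduce the statement to the machinery of the preceding corollary by reinterpreting the sheaf-cohomology hypothesis $H^2(\Spec(R)^\circ;V)=0$ as a local-cohomology vanishing. Write $U:=\Spec(R)^\circ$ and let $M$ be the given finitely generated module with $V=\widetilde M|_U$. Since $M$ is locally free, hence reflexive, on $U$, its reflexive hull $M^{\ast\ast}$ agrees with $M$ on $U$; replacing $M$ by $M^{\ast\ast}$ (equivalently, taking $M:=H^0(U,V)$, which is finitely generated and reflexive because $R$ is normal of dimension $>4\ge 2$), I may assume $M$ is reflexive with $\widetilde M|_U=V$. Then $M_\fp$ is free for every $\fp\in U$, that is, for every prime of height $\le d-1$; as $d-1\ge 4>3$, in particular $M_\fp$ is free for all $\fp$ of height $\le 3$.

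Next I would translate the hypothesis cohomologically. Because $\Spec(R)$ is affine, the long exact sequence relating local cohomology, cohomology on $\Spec(R)$, and cohomology on $U$,
\[
\cdots\to H^i_\fm(M)\to H^i(\Spec(R),\widetilde M)\to H^i(U,\widetilde M)\to H^{i+1}_\fm(M)\to\cdots,
\]
has vanishing middle terms for $i\ge 1$, yielding isomorphisms $H^i(U,\widetilde M)\cong H^{i+1}_\fm(M)$. In particular the assumption $H^2(U,V)=0$ is exactly the statement $H^3_\fm(M)=0$.

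The core step is then a proof by contradiction mirroring the previous corollary. Suppose $V$ is not trivial, i.e.\ $M$ is not free. A Cohen--Macaulay UFD is Gorenstein, so $\omega_R\cong R$, and since $R$ is strongly normal it satisfies Serre's condition $(R_2)$; thus all hypotheses of Proposition \ref{dv} hold, producing a height-two prime $P$ with
\[
E:=\Ext^{d-3}_R(M,R)\cong K_{R/P}\neq 0.
\]
Applying local duality over the Gorenstein ring $R$ gives $H^3_\fm(M)\cong E^\vee\neq 0$, which contradicts $H^3_\fm(M)=0$. Hence $M$ is free, and therefore $V=\widetilde M|_U$ is trivial.

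The main obstacle I expect is the very first step: verifying that the replacement module $M$ (the reflexive hull, or equivalently $H^0(U,V)$) is genuinely finitely generated, reflexive, and locally free on $U$ with $\widetilde M|_U=V$, so that Proposition \ref{dv} applies without modification. The cohomological bookkeeping afterwards—matching $H^2(U,V)$ with $H^3_\fm(M)$ and then, via local duality, with $\Ext^{d-3}_R(M,R)^\vee$—is routine, but the essential point is that the single vanishing $H^2(U,V)=0$ corresponds \emph{precisely} to the obstruction $H^3_\fm(M)=0$ that non-freeness of $M$ forces to be nonzero through \ref{dv}.
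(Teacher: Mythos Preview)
Your proof is correct and follows essentially the same route as the paper: replace $M$ by its reflexive hull (the paper does this via the Auslander--Bridger sequence $0\to\Ext^1(\Tr M,R)\to M\to M^{**}\to\Ext^2(\Tr M,R)\to 0$ and notes the outer terms have finite length since $M$ is locally free on $U$), identify $H^2(U,V)$ with $H^3_\fm(M)$, and then derive a contradiction from Proposition~\ref{dv} via local duality. Your additional remarks that a Cohen--Macaulay UFD is Gorenstein and that strongly normal gives $(R_2)$ are exactly the justifications used implicitly in the paper's setup.
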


\begin{proof}
	There is an exact sequence
	\[
	0 \longrightarrow C \longrightarrow M \longrightarrow M^{**} \longrightarrow D \longrightarrow 0,
	\]
	where $C=\Ext^1_R(\Tr(M),R)$ and $D=\Ext^2_R(\Tr(M),R)$. Since $M$ is locally free on $\Spec(R)^\circ$, both $C$ and $D$ have finite length, hence their associated sheaves vanish. Thus
	\(
	V=\widetilde{M}\cong \widetilde{M^{**}},
	\)
	and we may assume $M$ is reflexive.
	By local cohomology,
	\(
	H^2(\Spec(R)^\circ,V)\cong H^3_{\fm}(M).
	\)
	Thus $H^3_{\fm}(M)=0$. If $M$ were not free, Proposition~\ref{dv} would give $\Ext^{d-3}_R(M,R)\neq 0$, and local duality would imply
	\(
	H^3_{\fm}(M)\cong \Ext^{d-3}_R(M,R)^{\vee}\neq 0,
	\)
	a contradiction. Hence $M$ is free and $V$ is trivial.
\end{proof}

\begin{corollary}\label{76}
	Let $R$ be a regular local ring with $\dim R=d>4$, and let $V=\widetilde{M}$ be a vector bundle on $\Spec(R)^\circ$. If
	\(
	H^2(\Spec(R)^\circ,V)=0,
	\)
	then $V$ is trivial.
\end{corollary}

\end{document}